\setlist[enumerate]{font=\normalfont}
\pgfplotsset{compat=1.18}
\numberwithin{equation}{section}
\definecolor{poly}{RGB}{30, 160, 165} 
\theoremstyle{definition}
\newtheorem*{theorem*}{Theorem}
\newtheorem{theorem}{Theorem}[section]
\newtheorem{lemma}[theorem]{Lemma}
\newtheorem{proposition}[theorem]{Proposition}
\theoremstyle{definition}
\newtheorem{definition}[theorem]{Definition}
\newtheorem{example}[theorem]{Example}
\newtheorem*{example*}{Example}
\theoremstyle{remark}
\newtheorem{remark}[theorem]{Remark}
\newtheorem*{remark*}{Remark}
\newcommand\ZZ{\mathbb{Z}}
\newcommand{\RR}{\mathbb{R}}
\newcommand{\Vol}{\operatorname{Vol}}
\renewcommand{\d}{\operatorname{d}}
\newcommand{\dd}{ \, \mathrm{d}}
\DeclareMathOperator{\vol}{vol}
\DeclareMathOperator{\conv}{conv}
\newif\ifkeepslowthings
\author{Chiara Meroni}
\author{Jared Miller}
\author{Mauricio Velasco}
\title{Approximation of starshaped sets using polynomials}
\begin{document}

\begin{abstract}
We introduce polystar bodies: compact starshaped sets whose gauge or radial functions are expressible by polynomials, enabling tractable computations, such as that of intersection bodies. We prove that polystar bodies are uniformly dense in starshaped sets and obtain asymptotically optimal approximation guarantees. We develop tools for the construction of polystar approximations and illustrate them via several computational examples, including numerical estimations of largest volume slices and widths.
\end{abstract}

\maketitle

\section{Introduction}
Let $L\subseteq \RR^n$ be a starbody, namely a compact starshaped set containing the origin in its interior. The shape of $L$ is captured by either of two fundamental real-valued functions on the unit sphere $S^{n-1}\subseteq \RR^n$: the {\it gauge (or Minkowski) function} defined as $\gamma_L(x):=\inf\{\lambda\in \mathbb{R}_{>0}:x\in \lambda L\}$ and the  {\it radial function} defined as $\rho_L(x):=\sup\{\lambda\in \mathbb{R}_{>0}: \lambda x\in L\}$. 

Being able to solve optimization problems involving such functions would allow us to approach a wide array of natural questions about starbodies which are not obviously related to $\rho_L$ or $\gamma_L$ but which can be reduced to them. For instance:
\begin{enumerate}
\item {\it What is the largest volume of the intersection of $L$ and an $(n-1)$-plane through the origin?}  The function $\rho_{IL}(x):=\Vol(x^{\perp}\cap L)$ is the radial function of the intersection body $IL$ of $L$ and can be computed by applying the well-known Radon transform to $\rho_L^{n-1}$. The largest volume of hyperplane slices is achieved in the direction $x^*$, where $\rho_{IL}(x^*)$ is maximal.
\item {\it What is the width of $L$?} The width is defined as the largest minimum distance between pairs of parallel hyperplanes enclosing $L$. It can be computed by maximizing the function $w(x):=\gamma_{(\conv L)^\circ}(x)+\gamma_{(\conv L)^\circ}(-x)$ over the unit sphere. Here $(\conv L)^\circ$ is the polar body of the convex hull of $L$.
\end{enumerate}
Despite recent advances~\cite{BdLM:BestSlicePolytope}, there is a rather limited set of tools for approaching the problems above, even in the case when $L$ is a polytope, partly due to the fact that the functions $\gamma_L$ and $\rho_L$ are continuous but typically not differentiable.
\begin{figure}[b]
    \centering
    \includegraphics[width=0.3\linewidth]{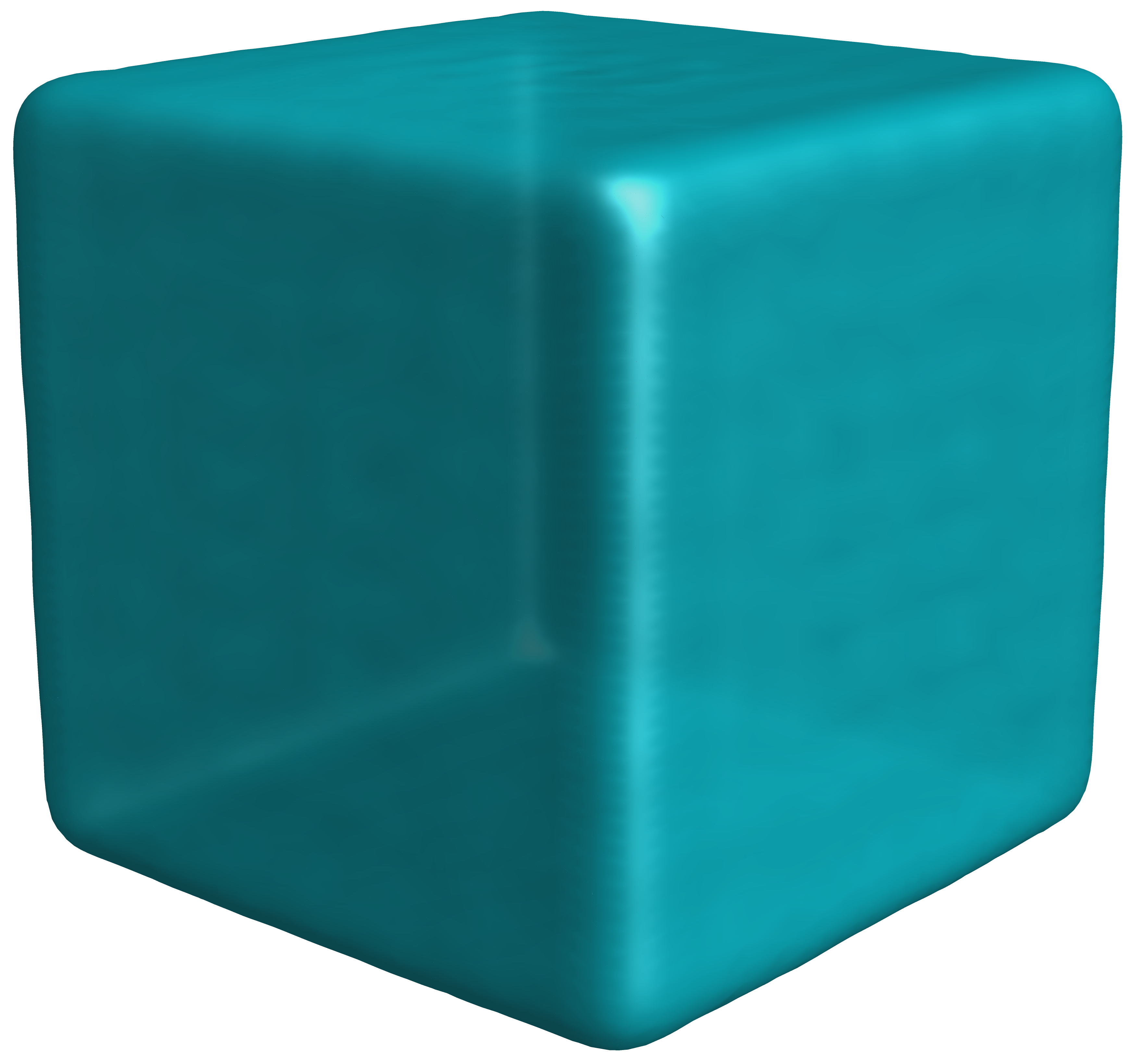}
    \caption{A polystar body approximating the cube.}
    \label{fig:polycube}
\end{figure}
In this article we introduce a new tool, the class of \emph{polystar bodies} (see Figure \ref{fig:polycube}), defined as the set of starbodies whose gauge or radial functions agree with the restriction of a polynomial at all points of the unit sphere $S^{n-1}$ (denoted \emph{polyradial bodies} and \emph{polygauge bodies} respectively). We aim to use polystar bodies to construct uniform approximations of general starbodies. Due to their additional algebraic structure, many invariants are easier to compute for polystar bodies than for general starbodies. If these invariants depend continuously on the body $E$ then the easily computable invariant of a polystar approximation of $E$ will give a good approximation for the value of the true invariant on $E$. Carrying out this strategy in practice requires establishing quantitative uniform approximation guarantees as well as developing effective mechanisms for the construction of polystar approximations and for computations with polystar bodies. Addressing these needs is the central contribution of this article.

More precisely, after introducing polystar bodies and their basic properties in Section~\ref{sec: polystar_basics}, we devote Section~\ref{Sec: polystar_approx} to proving that polystar bodies are capable approximators.  
To state our first result we need an additional piece of notation. Fix $\alpha:=\frac{n-2}{2}$ and for each integer $j$, let $\lambda_j$ be the largest root of the Gegenbauer polynomial $C^{(\alpha)}_j(t)$ (see Section~\ref{sec: Geg_quad} for precise definitions). Denote by $B_R(0) \subset \RR^n$ the standard ball centered at zero with radius $R>0$.
\begin{theorem}\label{thm: starDensity} The following statements hold:
\begin{enumerate}
\item ({\it Polystar density}) The sets of polygauge (resp. polyradial) bodies are dense in the set of starbodies with continuous gauge (resp. radial) function endowed with the supremum norm.
\item\label{item:dist_estimate} ({\it Distance estimates}) Let $L\subseteq \mathbb{R}^n$ be a starbody and $k$ a positive integer.
Let $f \in \{\gamma, \rho\}$. If $f_L$ is Lipschitz continuous with Lipschitz constant $\kappa$, then there exists a polystar body $L'$ with $\deg(f_{L'}) = d = 2k$ such that
\vspace*{-0.1cm}
\[
\|f_L-f_{L'}\|_{\infty}\leq \left(\frac{\pi}{\sqrt{2}}\sqrt{1-\lambda_{k+1}}\right)\kappa .
\]
Furthermore, the right-hand side of the above inequality behaves as 
$\frac{\pi (n-2)}{\sqrt{2}}\, \frac{\kappa}{d}$ as $d\rightarrow \infty$. 
\item({\it Abundance of Lipschitz starbodies}) If $L\subseteq B_R(0)$ is starshaped with respect to every point of the ball of radius $r$ around the origin, then $\kappa:=1/r$ (resp. $\kappa:=R^2/r$) is a valid Lipschitz constant for the gauge (resp. radial) function of $L$. 
\item ({\it Polygauge convexity}) If $L$ is convex, then the polygauge body from part \eqref{item:dist_estimate} is also convex.
\end{enumerate}
\end{theorem}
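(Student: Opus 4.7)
The plan rests on reinterpreting the approximation operator from part \eqref{item:dist_estimate} as a non-negative average over the rotation group, which preserves convexity of the $1$-homogeneous extension automatically. A starbody $L'$ is convex if and only if its gauge function, extended by positive $1$-homogeneity to $\RR^n$, is sublinear; since $L$ is convex, $\gamma_L$ is already sublinear. So the task reduces to showing that the construction preserves sublinearity when passing from $\gamma_L$ to $\gamma_{L'}$.

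First I would unpack the approximation operator. The Gegenbauer quadrature construction of part \eqref{item:dist_estimate} realizes the polynomial $p=\gamma_{L'}|_{S^{n-1}}$ as a convolution against a non-negative zonal polynomial kernel $\kappa\colon[-1,1]\to\RR_{\geq 0}$,
\[
p(x) \;=\; \int_{S^{n-1}} \kappa(\langle x,y\rangle)\,\gamma_L(y)\,d\sigma(y), \qquad x\in S^{n-1}.
\]
The crucial feature is $\kappa\geq 0$, which comes from the positivity of the Gegenbauer quadrature weights used to design the kernel.

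Next I would lift the zonal convolution to an average over $SO(n)$. Via the standard correspondence between $SO(n-1)$-bi-invariant functions on $SO(n)$ and zonal kernels on $S^{n-1}$, one obtains a non-negative $\phi\colon SO(n)\to\RR_{\geq 0}$ with
\[
\int_{SO(n)} \gamma_L(gx)\,\phi(g)\,dg \;=\; p(x) \qquad \text{for all } x\in S^{n-1}.
\]
The left-hand side is defined on all of $\RR^n$ and is positively $1$-homogeneous in $x$, so by uniqueness of $1$-homogeneous extensions it coincides with $\gamma_{L'}$ on $\RR^n$. For each fixed $g$, the map $x\mapsto\gamma_L(gx)$ is convex, being the pullback of the convex $\gamma_L$ under a linear map. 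The integral against $\phi\geq 0$ is then a non-negative-weight average of convex functions and is therefore convex, so the sublevel set $L'=\{x\in\RR^n:\gamma_{L'}(x)\leq 1\}$ is convex.

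The main obstacle is the first step: checking that the specific kernel produced by the Gegenbauer quadrature construction of part \eqref{item:dist_estimate} is indeed a non-negative zonal polynomial on $S^{n-1}$. Once that positivity is in hand, the translation between zonal convolution and group average is a routine application of harmonic analysis on $SO(n)/SO(n-1)$, and the convexity conclusion follows immediately from the fact that sublinear (equivalently convex $1$-homogeneous) functions form a cone closed under non-negative linear combinations and linear pullbacks.
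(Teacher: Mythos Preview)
Your proposal addresses only part~(4), taking the construction of part~(2) as given; that is fine, since (4) is the only place with nontrivial content beyond Theorem~\ref{thm:NewmannShapiro_operator} and Proposition~\ref{lem: gaugeLipschitz}. The ``main obstacle'' you flag is not one: the kernel $u_{2k}$ from \eqref{eq:u_k_optimal} is visibly a square, hence nonnegative on $[-1,1]$.

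The genuine gap is the step you call routine. The identity
\[
\int_{SO(n)} \gamma_L(gx)\,\phi(g)\,dg \;=\; T_u(\gamma_L)(x)\qquad\text{for all }x\in S^{n-1},
\]
with a single nonnegative $\phi$ depending only on the zonal kernel, \emph{fails} for $n\geq 3$. The standard correspondence you invoke identifies zonal convolution with \emph{group convolution} $F*\Phi$ on $SO(n)$ (where $F(g)=\gamma_L(ge)$ and $\Phi$ is the bi-$SO(n-1)$-invariant lift of $u$), not with the orbit average $f\mapsto\int f(g\,\cdot\,)\phi(g)\,dg$. These are different operators on a nonabelian group. Concretely, if the orbit-average operator $A_\phi$ agreed with $T_u$ on all support functions (hence on $C(S^{n-1})$, since differences of support functions are dense), then $A_\phi$ would inherit the $SO(n)$-equivariance of $T_u$, which forces $\phi$ to be a \emph{class function}; but the bi-invariant lift $g\mapsto u(\langle ge,e\rangle)$ is not a class function, and the unique class function matching the Funk--Hecke eigenvalues of $T_u$ need not be nonnegative. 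So your convexity argument, which hinges on $\phi\geq 0$, does not go through.

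The paper handles this by quoting Kiderlen~\cite{Kiderlen06:EndomorphismsCB}*{Theorem 1.4}: one writes $T_u(\gamma_L)=\int_{-1}^1 \mathcal{R}_\alpha(\gamma_L)\,d\tilde\mu(\alpha)$ with $\tilde\mu\geq 0$ coming from $u\geq 0$, and then uses the nontrivial fact that each generalized spherical Radon transform $\mathcal{R}_\alpha$ sends support functions to support functions. That last fact is precisely the work your $SO(n)$-average was meant to replace, and it does not reduce to a rotation mean in the way you suggest.
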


The proof of Theorem~\ref{thm: starDensity} is constructive. The approximating bodies are obtained by applying some especially tailored polynomial convolutions to the gauge or radial function of the target body following an approach similar to that used in~\cite{FF21:SOSSphere}  and \cite{CriVel24:HarmonicHierarchies} for constructing approximations of nonnegative polynomials via sums-of-squares. The greater generality of gauge/radial functions is mitigated by the fact that Lipschitz continuous functions can be approximated efficiently via polynomial convolutions on spheres thanks to a fundamental result of Newman-Shapiro~\cite{NewSha64:JacksonTheorem}
later generalized by Ragozin~\cite{Ragozin71:PolyApproxSphere},\cite{Ragozin70:PolynomialApproxCompactManifolds}. We use their theorem as our main technical tool and make a modest improvement to their bounds for the optimal approximation constant in Theorem~\ref{thm:NewmannShapiro_operator}. 

Our next result shows that the rate of approximation of $O\left(\frac{\kappa}{d}\right)$ appearing in Theorem \ref{thm: starDensity} part $(2)$ above is asymptotically sharp by providing a Kolmogorov width estimate. To state it precisely, let $C(S^{n-1})$ be the ring of continuous real valued functions in $S^{n-1}$ and for any real number $\kappa>0$, let $\Lambda(\kappa)\subseteq C(S^{n-1})$ be the set of nonnegative functions for which $\kappa$ is a valid Lipschitz constant.

\begin{theorem}\label{thm:Kolmogorov_radial_1}
Let $W\subseteq C(S^{n-1})$ be a finite-dimensional subspace containing the constant functions. If $\beta=\min\left({\rm dim}(W)^{-\frac{1}{n-1}}, \frac{2}{\kappa}\right)$ then the inequality $\sup_{f\in \Lambda(\kappa)} d(f,W)\geq \kappa\frac{\beta}{2}$ holds. In particular, there exists a positive constant $C_0$ such that, for all sufficiently large $d$, there is some starbody $L$ with $f_L \in \{\gamma_L, \rho_L\}$, $f_L \in \Lambda(\kappa)$ for which the inequality
    \[
    \| f_L - p \|_\infty \geq C_0 \frac{\kappa}{d}
    \]
holds for every polynomial $p$ of degree $d$ on $S^{n-1}$.
\end{theorem}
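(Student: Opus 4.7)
The plan is to prove the lower bound on $\sup_{f \in \Lambda(\kappa)} d(f, W)$ by constructing an explicit family of nonnegative $\kappa$-Lipschitz test functions indexed by sign vectors, and then applying an $\ell^\infty$-duality argument; the polynomial statement will follow by specializing $W$ to the space of polynomials of degree at most $d$ restricted to $S^{n-1}$, whose dimension grows as $O(d^{n-1})$.

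Set $h := \kappa \beta / 2$ and fix a maximal $\beta$-separated subset $\{x_1, \dots, x_N\} \subset S^{n-1}$. A standard volume-packing estimate yields $N \geq c_n \beta^{-(n-1)}$ for some $c_n > 0$, which, combined with $\beta \leq \dim(W)^{-1/(n-1)}$, can be arranged to give $N > \dim W$. Define the tent bumps
\[
\phi_i(y) := \max\bigl\{0,\; h - \kappa\, \mathrm{dist}_{S^{n-1}}(x_i, y)\bigr\}.
\]
Each $\phi_i$ is nonnegative, bounded above by $h$, supported in the geodesic ball of radius $\beta/2$ around $x_i$, and $\kappa$-Lipschitz. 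Because these supports are pairwise disjoint, a short case-by-case verification using the triangle inequality together with $d(x_j, x_k) \geq \beta$ shows that $g_\varepsilon := \sum_i \varepsilon_i \phi_i$ is still $\kappa$-Lipschitz for any sign pattern $\varepsilon \in \{-1, +1\}^N$, so $f_\varepsilon := h + g_\varepsilon$ is nonnegative and $\kappa$-Lipschitz, and therefore lies in $\Lambda(\kappa)$.

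For the duality step, since $W$ contains constants, $d(f_\varepsilon, W) = d(g_\varepsilon, W)$. Evaluating at the nodes gives $\|g_\varepsilon - p\|_\infty \geq \max_i |\varepsilon_i h - p(x_i)|$ for every $p \in W$. The evaluation map $E \colon W \to \RR^N$, $p \mapsto (p(x_i))_i$, has image of dimension at most $\dim W < N$, hence is contained in a proper hyperplane; by duality there exists $a \in \RR^N$ with $\|a\|_1 = 1$ and $\langle a, E(p) \rangle = 0$ for every $p \in W$. Choosing $\varepsilon_i := \mathrm{sgn}(a_i)$ and applying $|\langle a, y \rangle| \leq \|a\|_1 \|y\|_\infty$ to $y = h\varepsilon - E(p)$ yields
\[
\|g_\varepsilon - p\|_\infty \;\geq\; \bigl|\langle a, h\varepsilon \rangle - \langle a, E(p) \rangle\bigr| \;=\; h\|a\|_1 \;=\; h \;=\; \tfrac{\kappa \beta}{2},
\]
which proves the first claim after taking the sup over $\varepsilon$.

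The polynomial corollary follows by applying this with $W = W_d$, the space of polynomials of degree at most $d$ restricted to $S^{n-1}$, which satisfies $\dim W_d \leq C_n d^{n-1}$ for a constant $C_n$ depending only on $n$. For all sufficiently large $d$, $\dim(W_d)^{-1/(n-1)} \geq C_n^{-1/(n-1)}/d$ is less than $2/\kappa$, so $\beta = \dim(W_d)^{-1/(n-1)}$ and $\kappa\beta/2 \geq C_0 \kappa / d$ with $C_0 := C_n^{-1/(n-1)}/2 > 0$. To upgrade the extremal $f_\varepsilon$ to the gauge or radial function of an honest starbody, I replace $h$ by $h + \delta$ for a small $\delta > 0$: this keeps the function in $\Lambda(\kappa)$, ensures strict positivity on $S^{n-1}$, and does not change $d(f, W)$ since $W$ contains constants, so the bound $\|f_L - p\|_\infty \geq C_0 \kappa / d$ survives for the corresponding starbody $L$. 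The main obstacle will be matching the packing constant quantitatively so that $N > \dim W$ strictly without losing the clean factor $1/2$ in $\kappa\beta/2$: depending on the chosen metric (chordal versus geodesic) and the sphere normalization, this may require either a careful spherical-cap packing construction or a mild redefinition of $\beta$ that absorbs an auxiliary constant into $C_0$ in the polynomial bound.
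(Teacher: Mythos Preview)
Your argument is correct and essentially identical to the paper's: tent-bump test functions centered at well-separated points, a kernel/duality argument on the evaluation map, and the sign alignment $\varepsilon_i = \operatorname{sgn}(a_i)$. The packing obstacle you flag at the end is resolved in the paper by invoking Lemma~2 of Newman--Shapiro, which guarantees $N+1$ points on $S^{n-1}$ at pairwise distance at least $\beta$ whenever $N\beta^{n-1} < 1$; taking $N = \dim W$ gives exactly one more node than the dimension, so the evaluation matrix is $N \times (N+1)$ with nontrivial kernel and no auxiliary constant is introduced.
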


The results discussed so far have two consequences that validate our approach: $(1)$ no other vector space of continuous functions of the same dimension as that of polynomials of degree $\leq d$ can yield better asymptotic approximation rates than the vector space of polynomials on general starbodies and $(2)$ no other mechanism for polynomial approximation behaves better (up to constant multiple) than the polynomial convolutions used for proving Theorem~\ref{thm: starDensity}. Motivated by these observations we next focus on the problem of effectively computing polystar approximations via convolutions, and devote Section~\ref{Sec: polystar_computation} to this topic. We assume that the gauge/radial function $f_L$ of a starbody $L$ is given in the form of a black-box implementation which, given a unit direction $u$, returns the value $f_L(u)$ and wish to use this implementation to construct a good polystar approximation for $L$.

To describe the proposed approach recall that a quadrature rule on the sphere is a pair $(X,W_X)$ where $X\subseteq S^{n-1}$ is a finite set and $W_X:X\rightarrow \mathbb{R}_{>0}$ is a positive function. Such a rule is useful for estimating integrals $\int fd\mu$ via sums $\sum_{z_i\in X}W_X(z_i)f(z_i)$. The rule is exact in degree $m$ if the estimate coincides with the value of the integral for every polynomial $f$ of degree at most $m$. We construct approximate starbodies using quadrature rules in a two-step procedure: Given the gauge/radial function $f_L$ we first estimate its Fourier decomposition via Algorithm 1 below and then apply a special convolution operator via the Funk-Hecke formula obtaining the desired polystar approximation (see Algorithm~\ref{alg:approx_mollified} for details).

\begin{algorithm}[ht!]
    \caption{Compute an approximation of $f_d$, the degree $d$ homogeneous part of $f$}
    \label{alg:approx_fourier}
    \textsc{Input:} $f \in L^2(S^{n-1},\mu)$ with spherical Lipschitz constant $\kappa$, $d>0$, $m>0$.\\
    \textsc{Output:} $\widetilde{f}_d$, so that $\|f_d - \widetilde{f}_d\|_\infty< c_{\kappa,d,n,m}$ for some $c_{\kappa,d,n,m}>0$.
    \begin{algorithmic}[1]
    \State $(X,W_X) \gets $ quadrature rule exact in degrees $d+m$ and $2d$
    \State \Return $\widetilde{f}_d(x) \gets \sum_{z_i \in X} W_X(z_i) f(z_i) Z_d(x,z_i)$
    \end{algorithmic}
\end{algorithm}

Our key technical contribution is Theorem~\ref{cor:algo1} which shows that the existence of a good polynomial approximation combined with a quadrature rule, exact in sufficiently high degree, guarantees that Algorithm~\ref{alg:approx_fourier} yields a provably good uniform approximation for the spherical harmonic components of any Lipschitz function.  In particular the method yields an algorithm for obtaining such approximations using only polynomially many evaluations of the target function, which may be of interest in its own right. Moreover, we show in Theorem~\ref{polySOS} that the functions resulting from this approximation procedure are sums-of-squares in the coordinate ring of the sphere.

Finally, in Section~\ref{Sec: polystar_applications} we showcase some applications of polystar bodies by estimating intersection bodies, largest volume slices, and widths of several interesting starbodies via their polystar approximations. These examples allow us to illustrate the versatility and practical value of the methods introduced in the article.

{\bf Acknowledgements.}
We thank Jes\'us de Loera for his contagious enthusiasm which led to the initiation of this project during ICERM's semester program on Discrete Optimization: Mathematics, Algorithms, and Computation. Chiara Meroni is supported by Dr. Max R\"ossler, the Walter Haefner Foundation, and the ETH Z\"urich Foundation.
Jared Miller was in part funded by Deutsche Forschungsgemeinschaft (DFG, German Research Foundation) under Germany's Excellence Strategy - EXC 2075 – 390740016, and supported by the Stuttgart Center for Simulation Science (SimTech).
Mauricio Velasco is partially supported by Fondo Clemente Estable grant FCE-1-2023-1-176172 (ANII,
Uruguay) and Fondo Clemente Estable grant FCE-1-2023-1-176242.

\section{Starbodies and polystar bodies}
\label{sec: polystar_basics}
In this section we establish the basic language used throughout the article: we introduce the gauge and radial functions of a starshaped set, give a few illustrative examples, and provide a geometric characterization of when such functions are Lipschitz continuous (Proposition~\ref{lem: gaugeLipschitz}). Furthermore, we introduce the key concept of \emph{polystar body}, a starbody whose gauge or radial function is expressible by polynomials, and establish some of its elementary properties (Proposition~\ref{prop: basic_polystar}). 

\subsection{Functions associated to starbodies}
A subset $L\subset \RR^n$ is a \emph{starshaped set with respect to the origin} if for every $x\in L$, the line segment $[0,x]$ connecting $x$ to the origin is entirely contained in $L$.
We will assume $L\subset \RR^n$ to be a \emph{starbody}, namely $L$ is a compact starshaped set which contains the origin in its interior.
Starshaped sets are a generalization of convex sets, which are starshaped with respect to any of their points. For background on starshaped sets we rely on \cite{HHMM:StarshapedSets} and the references therein.
We will work with starbodies via certain associated functions, as explained in the following section.

A starbody $L\subseteq \RR^n$ is completely specified by either of the following two functions: The \emph{gauge} (or \emph{Minkowski}) \emph{function} of $L$ defined as 
\begin{align}
    \gamma_L : S^{n-1} &\to \RR \\
    x &\mapsto \inf\{\lambda>0 \,|\, x\in \lambda L\},
\end{align}
and its reciprocal (multiplicative inverse), is the \emph{radial function} of $L$:
\begin{align}
    \rho_L :  S^{n-1} &\to \RR \\
    x &\mapsto \sup\{\lambda>0 \,|\, \lambda x\in L\}.
\end{align}
The gauge and the radial functions are both strictly positive functions on the sphere. They can also be naturally extended to the whole of $\RR^{n}$ (resp. to $\RR^{n}\setminus\{0\}$) by making $\gamma$ positively homogeneous (resp. $\rho$ homogeneous of degree $(-1)$). We will abuse notation, and use the same symbol for the function on $S^{n-1}$ and for its extension; the domain will be clear from the context. Every positive function on $S^{n-1}$ is the gauge/radial function of some starbody. 

If $L$ is convex, with the origin in its interior, then its gauge/radial function is continuous. This property does not necessarily hold in the nonconvex case, as the following example shows.
\begin{example} \label{ex:starbody_not_continuous}
Let $L\subseteq \RR^2$ be the starbody given by
\begin{equation}\label{eq:starbody_not_continuous}
    L = \{(x,y)\in \RR^2 \,|\, x^2+y^2\leq1\}\cup [(-2,0),(2,0)].
\end{equation}
The gauge (resp. radial) function of $L$ equals $\frac{1}{2}$ (resp. $2$) at $(\pm 1,0)$ and $1$ otherwise. Furthermore the locus of points with respect to which the set $L$ is starshaped consists of the horizontal line segment and in particular has empty interior.
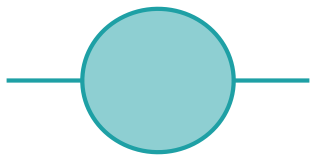
\begin{figure}[ht]
\ifkeepslowthings
    \centering
    \begin{tikzpicture}
\begin{axis}[
    width=3in,
    height=2.5in,
    hide axis,
    xmin=-6,xmax=6,ymin=-6,ymax=4]
\addplot[no markers, samples=100, domain=0:2*pi, variable=\t, style=ultra thick, color = poly, fill = poly!50]
    ({2*cos(\t r)}, {2*sin(\t r)});
\addplot[ultra thick, color=poly, domain=-4:-2] {0};
\addplot[ultra thick, color=poly, domain=2:4] {0};
\end{axis}
\end{tikzpicture}
\fi
    \caption{Starbody with discontinuous gauge and radial function from Example \ref{ex:starbody_not_continuous}.}
    \label{fig:counterex}
\end{figure}
\end{example}

The next proposition shows that the pathologies exhibited in Example \ref{ex:starbody_not_continuous}, lack of continuity of the gauge function and lack of interior of the set of starshaped source-points, are not unrelated.

\begin{proposition}[\cite{BCS23:LipschitzGauge}*{Proposition 2}] \label{lem: gaugeLipschitz} 
Let $L\subseteq \mathbb{R}^n$ be a bounded domain which is starshaped with respect to zero.  The gauge function of $L$ is Lipschitz continuous if and only if $L$ is starshaped with respect to every point of a ball $B_{r}(0)$ centered at zero with radius $r>0$. Moreover, when this condition is satisfied, the function $\gamma_L$ is Lipschitz continuous and $\frac{1}{r}$ is a valid Lipschitz constant for the function $\gamma_L$.
\end{proposition}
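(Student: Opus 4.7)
The plan is to prove the two implications separately and extract the Lipschitz constant $1/r$ from the geometric argument in the sufficient direction. Both directions hinge on the positive homogeneity $\gamma_L(tx)=t\gamma_L(x)$ for $t\geq 0$, which lets one reduce Lipschitz estimates on all of $\RR^n$ to estimates near the boundary $\partial L=\{x:\gamma_L(x)=1\}$.

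For the sufficient direction, assume $L$ is starshaped with respect to every point of $B_r(0)$. Fix $x\in\RR^n$ with $\gamma_L(x)=1$ (so $x\in L$) and any nonzero $v\in\RR^n$; the goal is to show $\gamma_L(x+v)\leq 1+|v|/r$. The key geometric identity is
\[
\frac{r}{r+|v|}(x+v)=\bigl(1-\mu\bigr)x+\mu\, y,\qquad \mu:=\frac{|v|}{r+|v|}\in[0,1],\quad y:=\frac{r}{|v|}v\in\overline{B_r(0)}.
\]
Since $x\in L$ and $L$ is starshaped with respect to $y$ (using a density argument if $|y|=r$ and the ball is meant to be open), the segment $[y,x]$ lies in $L$, so the left-hand side lies in $L$. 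Unpacking the scalar in front of $x+v$ yields $\gamma_L(x+v)\leq (r+|v|)/r=1+|v|/r$. To upgrade this into a genuine Lipschitz bound on $\RR^n$, take arbitrary nonzero $x_1,x_2$ and apply the estimate to $x_1':=x_1/\gamma_L(x_1)$ (which satisfies $\gamma_L(x_1')=1$) and $v:=(x_2-x_1)/\gamma_L(x_1)$; positive homogeneity then produces
\[
\gamma_L(x_2)-\gamma_L(x_1)\leq \frac{|x_2-x_1|}{r},
\]
and swapping $x_1,x_2$ gives the full Lipschitz estimate with constant $1/r$.

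For the necessary direction, assume $\gamma_L$ is Lipschitz with some constant $\kappa$ and set $r:=1/\kappa$. For any $x\in L$ (so $\gamma_L(x)\leq 1$), any $y\in B_r(0)$, and any $t\in[0,1]$, combine Lipschitz continuity with positive homogeneity as follows:
\[
\gamma_L\bigl((1-t)x+ty\bigr)\leq \gamma_L\bigl((1-t)x\bigr)+\kappa\,t|y|=(1-t)\gamma_L(x)+t\kappa|y|\leq (1-t)+t=1,
\]
using $\gamma_L(0)=0$ to anchor the Lipschitz inequality. Hence the whole segment $[x,y]$ lies in $L$, which is precisely the statement that $L$ is starshaped with respect to every point of $B_r(0)$.

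The main obstacle I anticipate is the careful handling of the boundary case in the sufficient direction: the convex-combination identity places $y$ on $\partial B_r(0)$ rather than inside, so one has to either interpret the starshaped hypothesis on the closed ball or approximate $y$ by points in $B_{r'}(0)$ with $r'<r$ and take $r'\to r$. The other subtlety is ensuring that the homogeneity-based rescaling is legal even without a priori continuity of $\gamma_L$ on $\RR^n$; this is fine because $\gamma_L$ is well-defined and positively homogeneous by its inf-definition on any bounded domain containing the origin in its interior, independently of continuity.
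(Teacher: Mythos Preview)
The paper does not actually give its own proof of this proposition: it is stated with a citation to \cite{BCS23:LipschitzGauge}*{Proposition 2} and no argument is supplied. So there is nothing in the paper to compare your approach against.

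That said, your proof plan is correct and is essentially the standard argument. The convex-combination identity in the sufficient direction is exactly the right geometric input, and your rescaling via positive homogeneity cleanly extracts the global Lipschitz constant $1/r$. The necessary direction is also fine: the chain $\gamma_L((1-t)x+ty)\leq (1-t)\gamma_L(x)+\kappa t|y|\leq 1$ is the right computation.

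Two small points you already anticipated deserve to be made explicit in a final write-up. First, since $L$ is a domain (open) the condition $\gamma_L(x)=1$ does not place $x$ in $L$; the cleanest fix is to prove the inequality $\gamma_L(x+v)\leq 1+|v|/r$ for any $x\in L$ (not just $\gamma_L(x)=1$), and then in the rescaling step take $\lambda>\gamma_L(x_1)$, apply the bound to $x_1/\lambda\in L$, and let $\lambda\downarrow\gamma_L(x_1)$. Second, the point $y=rv/|v|$ lands on $\partial B_r(0)$, so you should indeed run the argument with $r'<r$ and pass to the limit, as you note. Neither issue affects the substance of the argument.
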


Since $L$ is compact, the Lipschitz continuity of the gauge function is equivalent to that of the radial function. Henceforth we will restrict our attention to \emph{Lipschitz starbodies}, defined as starbodies whose gauge (equiv. radial) function is Lispchitz continuous. 

A starbody $L$ is convex if and only if the positively homogeneous gauge function $\gamma: \RR^n\rightarrow \RR$ is convex (or equivalently sublinear).
We will denote by $L^\circ$ the polar convex body, defined as $L^\circ = \{y \in \RR^n \,|\, \langle y, x \rangle \leq 1 \, \forall x \in L\}$, where $\langle \cdot , \cdot \rangle$ is the standard scalar product. If $L$ is convex then the gauge function of $L$ coincides with the \emph{support function} of $L^\circ$, which is by definition
\begin{equation}\label{eq:support_function}
    h_{L^\circ}(x):=\max\{\langle x,y \rangle: y\in L^\circ\}.
\end{equation}
For some special families of convex bodies, radial and support functions are known. For instance, Euclidean balls centered at the origin have constant radial and support function. For polytopes, these functions can be computed from the facet description as follows.

\begin{example}\label{ex:polytope_radialgauge}
    Suppose $P\subseteq \RR^n$ is a polytope containing the origin. If $P$ is given by a facet description of the form $P=\{x\in \RR^n \,|\, \langle H_i,x\rangle \leq 1 \text{ for $i=1,\dots, m$}\}$, then the gauge and the radial functions of the polytope are given respectively by
    \begin{equation}
        \gamma_P(x)=\max_i \, \langle H_i,x\rangle ,
        \qquad
        \rho_P(x)=\min_i\left\{ \frac{1}{\langle H_i,x\rangle}\text{ : $\langle H_i,x\rangle >0$}\right\},
        \qquad
        \text{ for } x \in S^{n-1}\subset\RR^n.
    \end{equation}
This formula for the radial function holds because $\lambda x\in P$ if and only if $\langle H_i, x\rangle =\lambda \langle H_i, x\rangle \leq 1$ for all $i$. The only equations imposing nontrivial constraints on the positive $\lambda$ are those for which  $\langle H_i,\lambda x\rangle >0$, proving the formula. 

For a specific instance, let $P = \conv \{ (-1,-1), (1,-1), (0,1) \}$ be a triangle. Its radial function in spherical coordinates reads
\begin{equation}
    \rho_P(\theta) = 
    \begin{cases}
        -\frac{1}{\sin (\theta)} & -\frac{3 \pi}{4} + 2k\pi \leq \theta\leq -\frac{\pi }{4} + 2k\pi, \\
        \frac{1}{\sin (\theta)+2 \cos (\theta)} & \;\, -\frac{\pi }{4} + 2k\pi \leq \theta\leq \frac{\pi }{2} + 2k\pi, \\
        \frac{1}{\sin (\theta)-2 \cos (\theta)} & \;\, -\frac{\pi }{2} + 2k\pi \leq \theta\leq \frac{5 \pi }{4} + 2k\pi.
    \end{cases}
\end{equation}
\end{example}

Besides completely specifying a starbody $L$, the support, gauge, and radial functions are useful for the effective computation of several of the body's fundamental invariants, for instance its  volume. Indeed, given any starbody $L$, we have 
\begin{equation}
    \vol L = \int_L 1 \, dx = \int_{S^{n-1}}\int_0^{\rho_L(x)} r^{n-1} \, {\rm d}r \, {\rm d} x = \frac{1}{n} \int_{S^{n-1}} {\rho}_L^{\, n}(x) \, {\rm d} x.
\end{equation}

Similarly support, gauge, and radial functions can also be used to define new geometric objects associated to a starbody. This is the case for instance of \emph{intersection bodies}. The intersection body $IL$ of a starbody $L\subset\RR^n$ is the starbody with radial function 
\begin{equation}\label{eq:intersection_body}
\rho_{IL}(x) = \vol (L \cap x^\perp ) = \frac{1}{n-1} \int_{S^{n-1}\cap x^\perp} \rho_L(y)^{n-1} \d\mu(y) = \mathcal{R}\left( \frac{1}{n-1} \rho_L^{n-1} \right)(x),
\end{equation}
where $\mathcal{R}$ denotes the spherical radon transform of any function in $L^2(S^{n-1},\mu)$. 
It is therefore a problem of much interest to compute, or at least to approximate, both gauge and radial functions.

\subsection{Polystar bodies}
When the gauge/radial functions of a starbody are continuous, they allow us to think of a starbody $L\subseteq \RR^n$ as an element of the ring $C(S^{n-1})$ of continuous real-valued functions $f: S^{n-1}\rightarrow \RR$ on the standard unit sphere. This ring is a metric space with the sup-norm $\|\cdot\|_{\infty}$ of uniform convergence and contains a subring $\RR[S^{n-1}]$ consisting of the restrictions of (not necessarily homogeneous) $n$-variate polynomials to the sphere. This ring is covered by the vector spaces $\RR[S^{n-1}]_{\leq d}$ consisting of restrictions of polynomials of degree at most $d$ to the unit sphere as $d$ ranges over the integers.

\begin{definition} A starbody $L$ is a \emph{polygauge} (resp. \emph{polyradial}) body if its gauge function (resp. its radial function) is an element of $\RR[S^{n-1}]$. To ease notation we say that $L$ is a polystar body if it is a starbody which is either polygauge or polyradial.
\end{definition}

\begin{example}\label{ex:polyconvexbody}
Consider the polynomial $p(x,y)=32 x^6 + 32 y + 128$ for $(x,y) \in S^1$. This defines a polygauge body $L_1$ via $\gamma_{L_1}(x,y)=p(x,y)$ and a polyradial body $L_2$ via $\rho_{L_2}(x,y)=p(x,y)$. These are shown in Figure \ref{fig:polyconvexbody}.
\begin{figure}[ht]
\ifkeepslowthings
    \centering
    \begin{tikzpicture}
\begin{axis}[
    width=3in,
    height=2.5in,
    hide axis,
    xmin=-6,xmax=6,ymin=-6,ymax=4]
\addplot[no markers, samples=100, domain=0:2*pi, variable=\t, style=ultra thick, color = poly, fill = poly!50]
                          ({15/(cos(\t r)^6 + sin(\t r) + 4)* cos(\t r)}, {15/(cos(\t r)^6 + sin(\t r) + 4)*sin(\t r)});
\end{axis}
\end{tikzpicture}
\qquad \qquad
\begin{tikzpicture}
\begin{axis}[
    width=3in,
    height=2.5in,
    hide axis,
    xmin=-6,xmax=6,ymin=-4,ymax=6]
\addplot[no markers, samples=100, domain=0:2*pi, variable=\t, style=ultra thick, color = poly, fill = poly!50]
                          ({(cos(\t r)^6 + sin(\t r) + 4)* cos(\t r)}, {(cos(\t r)^6 + sin(\t r) + 4)*sin(\t r)});
\end{axis}
\end{tikzpicture}
\fi
    \caption{The polygauge body $L_1$ and the polyradial body $L_2$ from Example \ref{ex:polyconvexbody}.}
    \label{fig:polyconvexbody}
\end{figure}
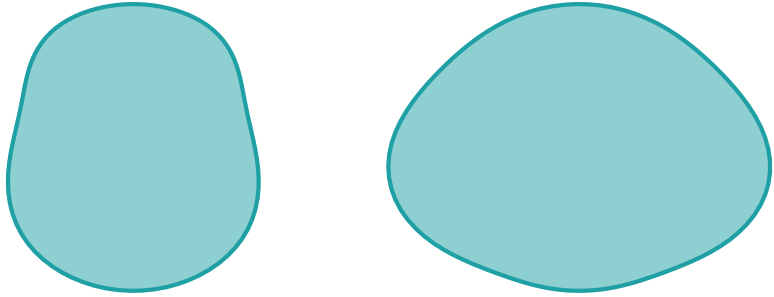
\end{example}
\begin{remark} While the gauge or the radial function of a polystar body $L$ is the restriction of a polynomial in the ambient space to the sphere, in general the homogeneous extension of degree $1$ of $\gamma_L$ or of degree $-1$ of $\rho_L$ to the whole Euclidean space is not a polynomial. Indeed, it is enough to notice that when $L = B_{1}(0)\subset\RR^n$, which satisfies $\gamma_L = \rho_L = 1$ on $S^{n-1}$, the homogeneous extensions of gauge and radial functions at $x\in\RR^n$ are respectively $\|x\|$ and $\frac{1}{\|x\|}$.  
\end{remark}

By the Stone-Weierstrass Theorem the ring $\RR[S^{n-1}]$ is dense in $C(S^{n-1})$ and therefore one should expect that both polyradial and polygauge bodies are a sufficiently rich class to approximate all continuous starbodies \emph{uniformly}.
The main result of the following Section is to provide quantitative measurements for the quality of such approximations as a function of the polynomial degree $d$ and to describe effective mechanisms for their construction when $L$ is a Lipschitz starbody.

We conclude this section with the following proposition, which contains basic properties which distinguish polystar bodies from general starbodies.

\begin{proposition} \label{prop: basic_polystar}
Let $L_1$ be a polygauge starbody, whose gauge function is defined by a polynomial $p\in\RR[S^{n-1}]$. Let $L_2$ be a polyradial starbody, whose radial function is defined by a polynomial $q\in\RR[S^{n-1}]$. Then the following statements hold:
\begin{enumerate}
\item There are no line segments contained in the boundary of $L_1$ or $L_2$;
\item $L_1$ (resp. $L_2$) is convex if and only if the function 
\[
\|x\| \, p\left( \frac{x_1}{\|x\|}, \ldots, \frac{x_n}{\|x\|}\right)
 \quad \left( \text{resp. } \frac{\|x\|}{q\left( \frac{x_1}{\|x\|}, \ldots, \frac{x_n}{\|x\|}\right)} \right)
\]
is convex in $\RR^n$. Furthermore, in either case, the body is strictly convex and the convexity condition can be verified by checking that the Hessian matrix is positive semidefinite at all points of $\RR^n$.
\end{enumerate}
\end{proposition}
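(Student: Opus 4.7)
The plan is to prove part (1) via real analyticity of the extended gauge/radial functions on $\RR^n\setminus\{0\}$, and to derive part (2) from (1) combined with the classical characterization of convex bodies by convexity of their positively-homogeneous gauge extension, already recalled in Section~\ref{sec: polystar_basics}.

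For part (1), the positively-homogeneous degree-one extension of $\gamma_{L_1}$ is $x\mapsto \|x\|\, p(x/\|x\|)$ and the homogeneous degree $-1$ extension of $\rho_{L_2}$ is $x\mapsto q(x/\|x\|)/\|x\|$; both are real analytic on $\RR^n\setminus\{0\}$, since polynomials and $\|x\|$ are analytic there. Suppose for contradiction that a nondegenerate segment $[a,b]$ lies in $\partial L_i$. Because $0$ is interior to $L_i$, the segment avoids the origin, so the line $\ell$ through $a$ and $b$ meets $\RR^n\setminus\{0\}$ in an open set whose connected component containing $[a,b]$ is unbounded. The restriction of the boundary defining function to $\ell$ is real analytic in one variable and vanishes on $[a,b]$, so by the identity principle it vanishes on the entire unbounded component. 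But along that component $\|x\|\to\infty$, whence $\|x\|\, p(x/\|x\|)\to\infty$ (since $p$ is bounded below by a positive constant on the compact sphere $S^{n-1}$) and $\|x\|-q(x/\|x\|)\to\infty$ (since $q$ is bounded on $S^{n-1}$), contradicting the vanishing.

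For part (2), I invoke the classical equivalence that a starbody $K$ with $0$ in its interior is convex if and only if its positively-homogeneous degree-one gauge extension is convex on $\RR^n$. For $L_1$ the extension is precisely $\|x\|\, p(x/\|x\|)$; for $L_2$, since the gauge on $S^{n-1}$ equals $1/q$, the extension is $\|x\|/q(x/\|x\|)$, yielding the stated equivalences. Strict convexity then follows immediately from part (1), since a convex body whose boundary contains no segment is by definition strictly convex. Finally, both candidate gauge extensions are real analytic, hence $C^2$, on $\RR^n\setminus\{0\}$, so positive semidefiniteness of the Hessian at each such point is equivalent to convexity on every convex subset of $\RR^n\setminus\{0\}$ by the standard second-order criterion; continuity of the extension at the origin and a limiting argument approximating any segment by segments avoiding the origin then upgrade this to convexity on all of $\RR^n$.

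The main technical obstacle is the analyticity step of part (1): one must ensure that the one-variable restriction to $\ell$ is analytic on a genuinely unbounded connected interval, which crucially uses that $0$ is interior to $L_i$ so the segment stays away from the origin, and then exploit compactness of $S^{n-1}$ to force the defining function to blow up at infinity. The remaining pieces reduce to classical facts about gauges, the identity principle for real-analytic functions, and the Hessian criterion for convexity of smooth functions.
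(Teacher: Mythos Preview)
Your proof is correct. Part~(2) is handled essentially as in the paper: the convexity criterion via the homogeneous gauge extension, strict convexity from part~(1), and the Hessian test. Your treatment of the Hessian criterion is in fact more careful than the paper's, which asserts $C^2$-smoothness on all of $\RR^n$ without addressing the origin, whereas you correctly restrict to $\RR^n\setminus\{0\}$ and then pass to the limit.

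Part~(1), however, follows a genuinely different route. The paper reduces to a two-dimensional argument: it intersects with the plane $\Pi$ spanned by the origin and the segment, parametrizes the unit circle in $\Pi$ by an angle $\theta$, and observes that on the arc subtended by $[a,b]$ one has $\gamma(\theta)=|\langle H,(\cos\theta,\sin\theta)\rangle|$ for a suitable $H$. Squaring both sides yields an identity of regular functions on a Zariski-dense arc, hence on the whole circle, forcing $\gamma$ to vanish in the direction orthogonal to $H$ (and dually forcing $\rho$ to blow up), a contradiction. Your argument instead stays in $\RR^n$, invokes real analyticity of the extended gauge/radial function on $\RR^n\setminus\{0\}$, restricts to the affine line $\ell\supset[a,b]$, and applies the one-variable identity theorem together with the blow-up at infinity. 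This is more direct and avoids the planar reduction and the squaring trick; the paper's approach, on the other hand, stays closer to the algebraic viewpoint (Zariski density in the coordinate ring of the circle) and makes the contradiction geometrically explicit by exhibiting a specific bad direction. Both are valid; yours is arguably cleaner.
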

\begin{proof} (1) Suppose that there exists a line segment $[a,b]\subset \partial L_1$. Since zero is an interior point of $L_1$ this segment does not contain the origin. Let $\Pi$ be the unique plane containing the origin and the segment $[a,b]$ and let $H\in \Pi$ be a vector which defines an affine line $\langle H,x\rangle =1$ containing the segment $[a,b]$. If $\tilde{\gamma}$ denotes the extension of $\gamma = \gamma_{L_1}$ to $\RR^n$ then our assumption is equivalent to $\tilde{\gamma}(v)=1$ for all $v\in [a,b]$.
By homogeneity this is equivalent to $1=\gamma\left(\frac{v}{\|v\|}\right)\|v\|$ for $v\in [a,b]$. If we use $\theta\in [\theta_0,\theta_1]$ to parametrize the circular arc spanned by the radial projection of $[a,b]$ in $\Pi$ then 
$\gamma(\theta):=\gamma(\cos(\theta),\sin(\theta))=\|v\|^{-1}$ for $v\in [a,b]$. This is equivalent to
\[
\gamma(\theta)= |\langle H,(\cos(\theta),\sin(\theta))\rangle|\text{ for $\theta\in [\theta_0,\theta_1]$}.
\]
Squaring both sides and using the fact that $L_1$ is polyradial we conclude that 
\[
\gamma^2(\theta)=\langle H,(\cos(\theta),\sin(\theta))\rangle^2,
\] 
for theta in a circular arc in $\Pi$. Since this is a Zariski dense subset of the unit circle in $\Pi$ and both sides are regular we conclude that the equality holds at every point of the circle. It follows that $\gamma$ vanishes at some point of the circle since the right-hand side vanishes if we choose a direction perpendicular to $H$. 
Since $\gamma$ and $\rho$ are multiplicative inverses on the sphere, the existence of a line in the boundary of $L_2$ would similarly imply that $\rho(\theta) = \rho_{L_2}(\theta)=|\langle H,(\cos(\theta),\sin(\theta))\rangle|^{-1}$. Since $L_2$ is polyradial this implies that $\rho(\theta)^2$ coincides with the fraction at all points where both are regular. By approaching a zero of the denominator we conclude that the absolute value of $\rho$ grows arbitrarily contradicting the compactness of $L_2$.

(2) The body $L_1$ (resp. $L_2$) is convex if and only if its polar body is convex. This happens if and only if the function on the sphere $p$ (resp. $\frac{1}{q}$) is a support function. Equivalently, if its positively homogeneous extension $\|x\| p\left( \frac{x}{\|x\|}\right)$ (resp. $\frac{\|x\|}{q\left( \frac{x}{\|x\|}\right)}$) is a convex function.\\
A convex body is strictly convex if and only if its boundary contains no lines so the strict convexity follows from part $(1)$. Finally the positively homogeneous gauge function is twice continuously differentiable in $\RR^n$ so its convexity can be characterized by the positive semidefiniteness of its Hessian at every point.
\end{proof}
\noindent
\textbf{Example \ref{ex:polyconvexbody} continued.}
By specializing part $(2)$ of the previous Proposition to the planar case $L_i\subset \RR^2$ with angular coordinate $\theta \in [0,2 \pi]$, one can see that $L_1$ (resp. $L_2$) is convex if and only if 
\[
p(\theta)+p''(\theta) \geq 0
 \quad \left( \text{resp. } \frac{1}{q(\theta)}+\left(\frac{1}{q(\theta)}\right)'' \geq 0 \right).
\]
For instance one can check that the polyradial body of Example~\ref{ex:polyconvexbody} is convex whereas the polygauge body is not. For this, we move to polar coordinates:
$p(x,y)(\theta) = 32 ( (\cos{\theta})^6 + \sin{\theta} + 4 )$, for $\theta\in [0,2\pi]$. We explicitly compute the radius of curvature of both bodies:
\begin{align}
    &L_1 : && -45 \cos (2 x)-90 \cos (4 x)-35 \cos (6 x)+138,\\
    &L_2 : && \frac{1}{32 \left(\sin (\theta )+\cos ^6(\theta )+4\right)^3} \Big( 2 \left(\sin ^2(\theta )+6 \sin (\theta )+8\right)+7 \cos ^{12}(\theta )+2 \cos ^2(\theta ) \\
    & && +42 \sin ^2(\theta ) \cos ^{10}(\theta )+(32-15 \sin (\theta )) \cos ^6(\theta )-30 \sin ^2(\theta ) (\sin (\theta )+4) \cos ^4(\theta ) \Big).
\end{align}
The radius of curvature of $L_1$ is negative at e.g. $\theta=\pi$. The radius of curvature of $L_2$ is a strictly positive function for $\theta\in [0, 2 \pi]$, hence $L_2$ is convex.

\section{Polystar approximation}\label{Sec: polystar_approx}
The main result of this section is the proof of Theorem~\ref{thm: starDensity} which shows that there exist sequences of polystar bodies of degree $\leq d$ capable of approximating any $L$-lipschitz body with a uniform error rate of $O\left(\frac{L}{d}\right)$.  The main tool for constructing the necessary approximating bodies is Theorem~\ref{thm:NewmannShapiro_operator}, a quantitative approximation result for Lipschitz continuous functions on spheres by polynomials. 

If $\mu$ denotes the volume measure in $S^{n-1}$ and $f\in L^2(S^{n-1},\mu)$, then $f$ can be represented, via the partial sums of its harmonic decomposition, as a limit of polynomials in the $L^2$-norm. The question of studying conditions which ensure the harmonic decomposition converges to $f$ in the $L^{\infty}$-norm, a much more useful result for optimization, has a long and illustrious history but typically requires stronger assumptions on $f$ such as high orders of differentiability (or, more precisely, a bound on the modulus of continuity of derivatives of high order as in \cite{Ragozin72:UniformConvergenceSphericalHarmonics}*{Theorem 5},~\cite{Ragozin70:PolynomialApproxCompactManifolds}). 

By contrast, we follow an approach pioneered by Newman and Shapiro \cite{NewSha64:JacksonTheorem} and later generalized by Ragozin \cite{Ragozin71:PolyApproxSphere} which defines a method for re-weighting the Fourier coefficients of the function being approximated to improve the speed of convergence using minimal assumptions on $f$, making it suitable for approximating the typically irregular gauge or radial functions. Using recent results on the roots of Gegenbauer polynomials \cite{DriJor12:boundsZerosOrthogonalPoly}, we improve upon the known rate of convergence of the method, obtaining a more refined constant in Theorem~\ref{thm:NewmannShapiro_operator}.

The tool used for constructing the necessary reweightings are \emph{spherical convolutions with polynomial filters}, a collection of operators whose definition we now recall.
If $u(t)$ is a univariate polynomial which is nonnegative in the interval $[-1,1]$, then the filter $u$ defines a continuous linear \emph{convolution} operator $T_u : L^2(S^{n-1},\mu) \to L^2(S^{n-1},\mu)$ via the formula
\[
T_u(f)(x) = \int_{S^{n-1}} u(\langle x,y\rangle) f(y) \d\mu(y).
\]
In words, the number $T_u(f)(x)$ is the weighted average of the values of the function $f(y)$ on $S^{n-1}$ where the weight is given by the number $u(c)$ on the sphere $S^{n-2}$ defined by $\langle x,y\rangle =c$. In particular, if the filter $u$ is chosen so it concentrates near one then the values $T_u(f)(x)$ should be uniformly close to $f(x)$.  The proof of Theorem~\ref{thm:NewmannShapiro_operator} explains how to explicitly define polynomial filters $u_d$ of degree $d$ with desirable concentration rates as $d$ increases. Making this precise requires a few preliminary notions.

\subsection{Harmonic decompositions and Gegenbauer polynomials}
For $n\geq 2$ let $\Delta:=\sum_{i=1}^n\frac{\partial^2}{\partial x_i^2}$ denote the Laplacian operator in $\RR^n$. A polynomial $f \in \RR[x_1,\ldots,x_n]$ is said to be \emph{harmonic} if $\Delta f(x) = 0$ for all $x\in\RR^n$.
The {\it spherical harmonics of degree $d$} are elements of the subspace $\mathcal{H}_d = \mathcal{H}_d(S^{n-1})\subseteq \RR[S^{n-1}]_{\leq d}$ of dimension $\binom{n+d-2}{d}+\binom{n+d-3}{d-1}$ consisting of the restrictions of homogeneous harmonic polynomials of degree $d$ to the unit sphere. This subspace can also be characterized using the inner product in $L^2(S^{n-1},\mu)$ as
\[
\mathcal{H}_d = \RR[S^{n-1}]_{\leq d}\cap\left(\RR[S^{n-1}]_{\leq d-1}\right)^{\perp}.
\]
It follows that $\RR[S^{n-1}]_{\leq d}=\bigoplus_{j\leq d} \mathcal{H}_j$
and by the Stone-Weierstrass Theorem that the spherical harmonics give an orthogonal decomposition
\[
L^2(S^{n-1},\mu)=\overline{\bigoplus_{d\in \mathbb{N}} \mathcal{H}_{d}}.
\]
In particular, every $f\in L^2(S^{n-1},\mu)$ has a unique expression as a sum $f=\sum_{j=0}^{\infty}f_j$ with $f_j\in \mathcal{H}_j$, the {\it Fourier decomposition} or {\it spherical harmonic decomposition} of $f$. 

As a first application of this decomposition, note that if $u(t)$ is a polynomial of degree $\leq d$ and $f\in L^2(S^{n-1},\mu)$ 
then the convolution $T_u(f)$ annihilates every harmonic component $f_j$ of $f$ with $j>d$. This implies that $T_u(f)$ is always a polynomial, regardless of whether $f$ satisfies any differentiability assumptions.

\subsubsection{Gegenbauer polynomials}\label{sec: Geg_quad}
 If $u(t)$ is an integrable function in $[-1,1]$ then a simple application of the co-area formula shows that
\begin{equation}\label{coarea}
\int_{S^{n-1}} u(\langle x,y\rangle)d\mu(y)= M_n \int_{-1}^1 u(t)(1-t^2)^{\frac{n-3}{2}}dt,
\end{equation}
for some constant $M_n$ depending only on the dimension $n$.
Since the right-hand side is an integral over the interval $[-1,1]$ it is possible to evaluate it exactly on polynomials $u(t)$ via a Gaussian quadrature formula whose basic properties we now review. 

If for each $j\in \mathbb{N}$ we knew polynomials $p_j(t)$ of degree $j$ which form an orthonormal sequence with respect to the inner product $\langle f,g\rangle:=\int_{-1}^1f(t)g(t)\omega(t)dt$, then the integrals of all polynomials $u(t)$ of degree at most $2k-1$ could be computed \emph{exactly} via the \emph{Gaussian quadrature} formula
\[
\int_{-1}^1u(t)\omega(t)dt=\sum_{j=1}^k w_ju(\lambda_j),
\]
where $\lambda_1,\dots,\lambda_k$ are the roots of the polynomial $p_k(t)$, and the weights $w_i$ are defined by integrating the Lagrange interpolators at the same roots, namely
\[
w_i:=\int_{-1}^1 \frac{\prod_{j\neq i}(t-\lambda_j)}{\prod_{j\neq i}(\lambda_i-\lambda_j)}\omega(t)dt.
\]
As a special case, the \emph{Gegenbauer (or ultraspherical) polynomials} form the orthogonal bases required for building such rules with respect to some specific weights.
More precisely, for any real number $\alpha>0$ define the \emph{$(\alpha,j)$-th Gegenbauer polynomial} $C_j^{(\alpha)}(t)$ recursively by the formulas
\begin{equation}\label{eq:Gegenbauer}
\begin{gathered}
    C_0^{(\alpha)}(t)=1\text{ , }C_1^{(\alpha)}(t)=2\alpha t\text{  and }\\
    C_j^{(\alpha)}(t)= \frac{1}{j}\left[2t(j+\alpha -1)C_{j-1}^{(\alpha)}(t)-(j+2\alpha-2)C^{(\alpha)}_{j-2}(t)\right]\text{ if $j\geq 2$}.
\end{gathered}
\end{equation}
For fixed $\alpha$, the Gegenbauer polynomials are orthogonal in $[-1,1]$ with respect to the weight function $\omega_{\alpha}(t):=(1-t^2)^{\alpha-\frac{1}{2}}$ and their norm satisfies the identity
\[
\int_{-1}^{1}\left( C^{(\alpha)}_j(t)\right)^2 \omega_{\alpha}(t)dt= \frac{\pi 2^{1-2\alpha}\Gamma(j+2\alpha)}{j!(j+\alpha)\Gamma(\alpha)^2}.
\]

\subsection{Polynomial approximation of Lipschitz functions via convolutions}\label{sec:best_approx} We are now ready to prove the main result of this Section. We emphasize that Theorem \ref{thm:NewmannShapiro_operator} represents a minor refinement of \cite{NewSha64:JacksonTheorem}*{Theorem 2}, providing a slightly smaller constant in the asymptotic behavior. The proof is added for the reader's benefit, since it highlights the chosen polynomial $u$, and explains why it is an efficient candidate.

For given functions $f(x)$, $g(x)$, we use the asymptotic notation $f\sim g$ as $x\to\infty$ as shorthand for $\limsup_{x\rightarrow \infty} \left|{f(x)}/{g(x)}\right|$ being finite.
Recall the linear operator $T_u : L^2(S^{n-1},\mu) \to L^2(S^{n-1},\mu)$ given by
\[
T_u(f)(x) = \int_{S^{n-1}} u(\langle x,y\rangle) f(y) \d\mu(y).
\]
Fix $\alpha=\frac{n-2}{2}$ and for each integer $j$ let $\lambda_j$ be the largest root of the Gegenbauer polynomial $C^{(\alpha)}_j(t)$. These numbers are fundamental for estimating the quality of our approximations.

\begin{theorem}\label{thm:NewmannShapiro_operator}
    Let $f$ be a Lipschitz function with Lipschitz constant $\kappa$ on $S^{n-1}$. If $d=2k$ then there exists a univariate polynomial $u(t)$ of degree $d$ which is nonnegative in $[-1,1]$ such that  \[
    \| f - T_u(f) \|_{\infty} \leq \left(\frac{\pi}{\sqrt{2}}\sqrt{1-\lambda_{k+1}}\right)\kappa.\]
Furthermore the quantity in the right hand side behaves as $\frac{\pi (n-2)}{\sqrt{2}}\, \frac{\kappa}{d}$ as $d\rightarrow \infty$.
\end{theorem}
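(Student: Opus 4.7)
The plan is to take the filter $u$ to be a normalized square of a ``deflated'' Gegenbauer polynomial, and then estimate the error by Cauchy--Schwarz followed by exact Gaussian quadrature. Specifically, I would set
\[
u(t) \;=\; \frac{1}{N}\left(\frac{C^{(\alpha)}_{k+1}(t)}{t-\lambda_{k+1}}\right)^{\!2},
\]
with $N>0$ chosen so that $T_u(1)\equiv 1$, equivalently $M_n\int_{-1}^{1} u(t)\,\omega_{\alpha}(t)\,dt = 1$ for $\omega_{\alpha}(t) = (1-t^{2})^{(n-3)/2}$. By construction, $\deg u = 2k = d$, $u$ is nonnegative on $[-1,1]$, $u$ vanishes at the first $k$ roots $\lambda_{1},\ldots,\lambda_{k}$ of $C^{(\alpha)}_{k+1}$, and $u(\lambda_{k+1})>0$. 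Using $T_u(1)=1$ I can write $f(x)-T_u(f)(x)=\int u(\langle x,y\rangle)(f(x)-f(y))\,d\mu(y)$ and apply Cauchy--Schwarz against the probability density $y\mapsto u(\langle x,y\rangle)$. Combined with the spherical Lipschitz bound $|f(x)-f(y)|\le \kappa\arccos\langle x,y\rangle$ and the elementary estimate $\arccos t\le \frac{\pi}{\sqrt{2}}\sqrt{1-t}$ on $[-1,1]$ (verified by setting $t=\cos\theta$ and checking $\theta\le \pi\sin(\theta/2)$), this gives
\[
|f(x) - T_u(f)(x)|^{2} \;\le\; \frac{\pi^{2}\kappa^{2}}{2}\int_{S^{n-1}} u(\langle x,y\rangle)(1-\langle x,y\rangle)\,d\mu(y),
\]
which by the co-area formula \eqref{coarea} equals $\frac{\pi^{2}\kappa^{2}}{2}\,M_n\int_{-1}^{1} u(t)(1-t)\omega_{\alpha}(t)\,dt$.

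The crucial observation is that $u(t)(1-t)$ has degree $2k+1 = 2(k+1)-1$, which is precisely the highest degree at which the Gaussian rule with the $k+1$ nodes $\lambda_{1},\ldots,\lambda_{k+1}$ is still exact for the weight $\omega_{\alpha}$. Since $u$ is forced to vanish at $\lambda_{1},\ldots,\lambda_{k}$, only the node $\lambda_{k+1}$ contributes to both $\int u(t)(1-t)\omega_{\alpha}\,dt$ and $\int u(t)\omega_{\alpha}\,dt$; the common factor $w_{k+1}u(\lambda_{k+1})$ cancels in the ratio, leaving $1-\lambda_{k+1}$. Combined with the normalization $M_n\int u\,\omega_{\alpha}\,dt = 1$, this yields
\[
|f(x) - T_u(f)(x)| \;\le\; \frac{\pi\kappa}{\sqrt{2}}\sqrt{1-\lambda_{k+1}}
\]
uniformly in $x\in S^{n-1}$.

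For the asymptotic rate, I would invoke the Driver--Jordaan bounds \cite{DriJor12:boundsZerosOrthogonalPoly} on the largest zero of $C^{(\alpha)}_{k+1}$ with $\alpha=(n-2)/2$ to derive $1-\lambda_{k+1} = O\bigl((n-2)^{2}/d^{2}\bigr)$ with the announced leading constant, yielding the rate $\frac{\pi(n-2)\kappa}{\sqrt{2}\,d}$. The main obstacle here is conceptual rather than technical: one must guess the right filter. The decisive choice is to pick $u$ so that $\deg(u(t)(1-t))$ hits the exactness threshold $2(k+1)-1$ of the Gauss--Gegenbauer rule while $u$ simultaneously annihilates all but one of the quadrature nodes. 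Once this is in place, every remaining step---Cauchy--Schwarz, the chord-versus-geodesic comparison, and exact quadrature---is mechanical, and the slight improvement of the constant over the original Newman--Shapiro bound comes entirely from substituting the sharper Driver--Jordaan estimate of $\lambda_{k+1}$ for the classical Szegő asymptotic.
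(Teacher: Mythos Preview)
Your proposal is correct and matches the paper's proof essentially step for step: the same deflated-Gegenbauer-squared filter, the same Cauchy--Schwarz against the probability kernel $u(\langle x,\cdot\rangle)\,d\mu$, the same geodesic-versus-chord estimate $\arccos^{2}t\le\frac{\pi^{2}}{2}(1-t)$, and the same exact Gaussian-quadrature computation that reduces the integral ratio to $1-\lambda_{k+1}$, followed by the Driver--Jordaan bound for the asymptotics. The only notational wrinkle is that in the paper $\lambda_{j}$ denotes the largest root of $C^{(\alpha)}_{j}$ (not the $j$-th root of $C^{(\alpha)}_{k+1}$), so your phrase ``vanishes at the first $k$ roots $\lambda_{1},\dots,\lambda_{k}$'' should be adjusted accordingly.
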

\begin{proof}
    Let $d=2k$, $\alpha = \frac{n-2}{2}$, and consider the Gegenbauer polynomial $C^{(\alpha)}_j$ of degree $j$. Denote by $\lambda_j$ the largest root of $C^{(\alpha)}_j$ and define the polynomial
    \begin{equation}\label{eq:u_k_optimal}
    u_{2k}(t) = c \left( \frac{C^{(\alpha)}_{k+1}(t)}{t-\lambda_{k+1}} \right)^2,
    \end{equation}
    where $c$ is a normalization constant chosen to guarantee that $\int_{S^{n-1}} u_{2k}(\langle x,y\rangle) \d\mu(y)=1$.
    Since $f$ has Lipschitz constant $\kappa$, we know that $|f(x)-f(y)|\leq \kappa d(x,y)$ for any pair of points $x,y \in S^{n-1}$. Applying the operator $T_{u_{2k}}$ to this inequality we get that for every $x\in S^{n-1}$ \begin{equation}\label{eq:estimateNS}
    |f(x) - T_{u_{2k}}(f)(x)| \leq \kappa \, T_{u_{2k}}(d(x,y))(x) \leq \kappa \, \sqrt{T_{u_{2k}}(d(x,y)^2)(x)},
    \end{equation}
    where we the last inequality follows from Cauchy-Schwarz in $L^2(S^{n-1},\mu)$. We can estimate the squared geodesic distance on the sphere with a linear polynomial, as in \cite{NewSha64:JacksonTheorem}:
    \[
    d(x,y)^2 = \arccos^2{(\langle x,y\rangle)} \leq \frac{\pi^2}{2} (1-\langle x,y\rangle),
    \]
and therefore by the nonnegativity of $u_{2k}$,
    \begin{align*}
        T_{u_{2k}}(d(x,y)^2)(x) &\leq \frac{\pi^2}{2} (1- T_{u_{2k}}(\langle x,y\rangle)).
    \end{align*}
By formula~\eqref{coarea}, this implies that for $x,y\in S^{n-1}$
    \[
    T_{u_{2k}}(\langle x,y\rangle)(x) = \frac{\int_S u_{2k}(\langle x,z\rangle) \langle x,z\rangle \d \mu(z)}{\int_S u_{2k}(\langle x,z\rangle) \d \mu(z)}=\frac{\int_{-1}^1 t\, u_{2k}(t) (1-t^2)^{\alpha-\frac{1}{2}} \d t}{\int_{-1}^1 u_{2k}(t) (1-t^2)^{\alpha-\frac{1}{2}} \d t} = \lambda_{k+1}.
    \]
The last equality follows from the orthogonality in $[-1,1]$ of Gegenbauer polynomials with the weight function $(1-t^2)^{\alpha-1/2}$. Indeed, the roots of such polynomial of degree $k+1$ are the nodes of a Gaussian quadrature formula which gives the exact values of integrals for polynomials of degree at most $2(k+1)-1$ for this weight function. In particular, we can use it for $tu_{2k}(t)$. Substitution into~\ref{eq:estimateNS} now yields the claimed inequality.

In the more recent paper \cite{DriJor12:boundsZerosOrthogonalPoly}*{Section 2.3} the authors found the bound 
    \[
        1 - \lambda_{k+1} < \frac{(2\alpha + 1)(2 \alpha + 5)}{4k(k+2\alpha+2) + (2\alpha + 1)(2 \alpha + 5)} \sim \frac{4\alpha^2}{4 k^2} \sim \frac{(n-2)^2}{d^2}.
    \]
    Putting all these ingredients together we get 
    \[
        T_{u_{2k}}(d(x,y)^2)(x) \leq \frac{\pi^2}{2} (1- T_{u_{2k}}(\langle x,y\rangle)) = \frac{\pi^2}{2} (1- \lambda_{k+1})  \quad \hbox{and} \quad \frac{\pi^2}{2} (1- \lambda_{k+1}) \sim \frac{\pi^2}{2} \frac{(n-2)^2}{d^2}.
    \]
    Going back to \eqref{eq:estimateNS} and taking the square root, we obtain the claimed asymptotic behaviour.
\end{proof}

\begin{remark}
    Several earlier works have highlighted the \emph{extremality} of the polynomials $T_u(f)$ for $u$ as in \eqref{eq:u_k_optimal}. After Newmann, Shapiro, \cite{NewSha64:JacksonTheorem} and Ragozin \cite{Ragozin71:PolyApproxSphere}, 
    in the work by Fang and Fawzi \cite{FF21:SOSSphere}, the authors show that the sum-of-squares hierarchy to approximate the maximum of a polynomial on the sphere converges quadratically, at speed $(\frac{n}{d})^2$ where $n-1$ is the dimension of the sphere and $d$ the degree of the hierarchy. This is achieved \cite{FF21:SOSSphere}*{Proposition 7} by bounding the largest root of the Gegenbauer polynomials, and then the error of the inverse operator $T_u^{-1}(f)$.
\end{remark}

\subsection{Constructing polystar approximations}
In this section we prove Theorem~\ref{thm: starDensity} which shows that polyradial and polygauge bodies are dense among all Lipschitz starbodies and provides us with distance estimates. It further gives us an abundance of examples of Lipschitz starbodies and proves that {\it convex polygauge bodies} are dense among all Lipschitz convex bodies. These properties make polystar bodies suitable approximators.

\begin{proof}[Proof of Theorem~\ref{thm: starDensity}]
$(1)$ Since $L$ is a starbody and by definition the origin lies in its interior, the gamma function $\gamma_L$ is strictly positive on $S^{n-1}$ and $\rho_L=\gamma_L^{-1}$, therefore, either both functions are continuous or neither are. By assumption we are in the first case, and the general Stone-Weierstrass Theorem implies that there exist polynomials which, restricted to the sphere, approximate $\gamma_L$ (resp. $\rho_L$) uniformly. This holds because the sphere is compact and polynomials restricted to the sphere are an algebra which separates points and contains the constants. 

$(2)$ Let $f \in \{\gamma, \rho\}$ be the gauge/radial function of $L$. By Theorem~\ref{thm:NewmannShapiro_operator} with the univariate polynomial $u_d(t)$ of degree $d$ chosen optimally as in \eqref{eq:u_k_optimal}, we can construct a polynomial $T_{u_d}(f_L)$ of degree at most $d$ which approximates $f_L$ uniformly, having distance in $O\left(\frac{\kappa}{d}\right)$. Defining $L'$ to be the polystar body with gauge/radial function $f_{L'} = T_{u_d}(f_L)$, the claim holds.

$(3)$ To obtain explicit quantitative bounds for the Lipschitz constant, suppose $L$ is a starbody which is starshaped around every point of the ball $B_{r}(0)$ with radius $r>0$. By Proposition~\ref{lem: gaugeLipschitz}, the gauge function of $L$ is Lipschitz continuous and $\frac{1}{r}$ is an admissible Lipschitz constant. 
Furthermore, 
\[
|\rho_{L}(y)-\rho_{L}(x)| =\frac{|\gamma_{L}(y)-\gamma_{L}(x)|}{|\gamma_L(x)\gamma_L(y)|}\leq \frac{|\gamma_{L}(y)-\gamma_{L}(x)|}{sup_{z\in S^{n-1}}|\gamma_L(z)|^2} = \left(\inf_{z\in S^{n-1}}|\rho_L(z)|^2\right)|\gamma_L(x)-\gamma_L(y)|,
\]
so letting $m:=\inf_{z\in S^{n-1}}|\rho_L(z)|^2$, also $\rho_L$ is Lipschitz continuous, with constant $\frac{m}{r}$.

$(4)$ Finally, we show that if $L$ is convex then the approximating polygauge body $L'$ we have constructed is automatically convex. To see this, recall that the gauge function of $L'$ was built by applying the convolution operator $T_{u_d}$ to the gauge function $\gamma_L$ of $L$:
\[
\gamma_{L'}(x):=T_{u_d}(\gamma_L)=\int_{S^{n-1}} u_d(\langle x,y\rangle)\gamma_L(y)d\mu(y).
\]
A deep result of Kiderlen~\cite{Kiderlen06:EndomorphismsCB}*{Theorem 1.4} shows that for $u_d(t)$ nonnegative on $[-1,1]$ the operator $T_d$ preserves the property of {\it being a support function} (cf. \eqref{eq:support_function}). In other words, the natural extension $\|x\|\gamma_{L'}\left(\frac{x}{\|x\|}\right)$ is convex whenever the natural extension $\|x\|\gamma_{L}\left(\frac{x}{\|x\|}\right)$ of the original function $\gamma_L$ is convex, i.e., whenever $L$ is convex.
For the reader's benefit, we outline the ingredients of Kiderlen's argument. Let $\mathcal{R}_\alpha(\gamma_L(x))$ be the generalized spherical Radon transform, namely the uniform average of the $\gamma_L$-values on the $(n-2)$-dimensional sphere $S\cap \left(\alpha x + x^{\perp}\right)$, and consider the (positive) measure $\tilde{\mu}$ defined on $[-1,1]$ by the univariate polynomial $u_d(t)$. Then, the gauge function of $L'$ can be rewritten as
\[
\gamma_{L'}(x)= \int_{-1}^1 \mathcal{R}_{\alpha}(\gamma_L(x))d\tilde{\mu}(\alpha).
\]
Kiderlen shows that if $L$ is convex then, for every choice of $\alpha\in [-1,1]$, the natural extensions of the functions $\mathcal{R}_{\alpha}(\gamma_L(x))$ are support functions of certain {\it convex} bodies. As a result, the same property holds for any of their weighted averages. We conclude that the polygauge body $L'$, defined as the starbody with gauge function $\gamma_{L'}$, is a convex set as claimed.
\end{proof}

\begin{remark} We will show later (Proposition~\ref{polySOS}) that the gauge/radial function of the polygauge/polyradial bodies constructed in Theorem~\ref{thm: starDensity} can be chosen to be sums of squares in $\RR[S^{n-1}]$.
\end{remark}

\subsection{Asymptotic optimality of polystar approximations}\label{subsec:Kolmogorov}

In this section we show that the approximation error in Theorem~\ref{thm: starDensity} for the radial function of a starbody via the convolution operator $T_u$ where $u$ is a univariate polynomial is, asymptotically, the best possible. We prove this by computing lower bounds on the \emph{Kolmogorov width} of the set of positive functions with a fixed Lipschitz constant. The ideas for the proof are based on results of G.G. Lorentz \cite{Lor60:BoundsDegreeApprox}*{Theorem 1} who uses them to prove lower bounds on the Kolmogorov width of vector spaces of functions with a given modulus of continuity. 

\begin{definition}
    Let $A\subset C(S^{n-1})$ be a set of continuous, real-valued functions on the sphere, endowed with the supremum norm $\| \cdot \|_{\infty}$. Given $N\in \ZZ$, the \emph{Kolmogorov $N$-width} of $A$ is 
    \[
    \mathcal{W}_N(A) = \inf_{\substack{W\subset C(S^{n-1})\\ \dim W \leq N}} \sup_{a \in A} \d(a,W),
    \]
    where the infimum runs over all linear subspaces $W$ of $C(S^{n-1})$ of dimension at most $N$, and the distance function is $\d(a,W) = \inf_{w\in W} \| w-a \|_{\infty}$.
\end{definition}

The Kolmogorov $N$-width measures the smallest (i.e., the best possible) worst-case uniform approximation error among all subspaces $W$ of dimension $N$ in the case when we want to approximate the functions in $A$ uniformly. Therefore, a lower bound of $\delta$ on the Kolmogorov $N$-width implies that for any subspace $W$ spanned by $N$ functions, there is a function $a\in A$ that cannot be approximated by the functions in $W$ with an error smaller than $\delta$.

We start with a preliminary lemma that constructs a family of nonnegative Lipschitz functions.

\begin{lemma}\label{lem:starshaped_add_remove_cones_new}
    Let $\kappa>0$ be a given real number and suppose $v_1,\ldots, v_N \in S^{n-1}$ are a set of points at pairwise (spherical geodesic) distance at least $2\delta$ for some integer $N$. If $\delta<1/\kappa$, then for any choice of signs $\tau\in\{-1,1\}^N$ there exists a function $f_\tau\in C(S)$ satisfying the following properties:
\begin{enumerate}
    \item The function $f_{\tau}$ is strictly positive and Lipschitz continuous with Lipschitz constant $\kappa$;
    \item for $i=1,\ldots,N$ we have $f_\tau(v_i)=1+\tau_i \kappa\delta$ and $f_{\tau} (x) = 1$ for all $x$ with distance at least $\delta$ from all points $v_i$.
\end{enumerate}
\end{lemma}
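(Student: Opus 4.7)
The plan is to build $f_\tau$ as a sum of small ``cone'' bumps, one per point $v_i$, perturbing the constant function $1$. Writing $d(\cdot,\cdot)$ for spherical geodesic distance, I would set
\[
g_i(x) := \tau_i \kappa \max\bigl(\delta - d(x, v_i),\, 0\bigr), \qquad f_\tau(x) := 1 + \sum_{i=1}^N g_i(x).
\]
Since the open balls $B_\delta(v_i) = \{x : d(x, v_i) < \delta\}$ are pairwise disjoint by the separation hypothesis $d(v_i, v_j) \geq 2\delta$, these bumps have pairwise disjoint supports, so on any $B_\delta(v_i)$ only the single term $g_i$ is nonzero. Continuity of $f_\tau$ is immediate from continuity of each $g_i$.

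Verifying property (2) and strict positivity is then direct. At $v_i$, only the term $g_i(v_i) = \tau_i \kappa \delta$ survives, giving $f_\tau(v_i) = 1 + \tau_i \kappa \delta$; at any point $x$ of distance $\geq \delta$ from every $v_i$, every $g_i$ vanishes and $f_\tau(x) = 1$. The minimum of $f_\tau$ is attained where some $\tau_i = -1$ at the center $v_i$, with value $1 - \kappa\delta$, which is strictly positive by the hypothesis $\delta < 1/\kappa$.

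The main step is establishing the global Lipschitz bound with constant $\kappa$ (rather than $N\kappa$). Each individual $g_i$ is $\kappa$-Lipschitz, since $d(\cdot, v_i)$ is $1$-Lipschitz and truncation at zero preserves that. Normally the Lipschitz constant of a sum of Lipschitz functions is the sum of the constants, but here the disjoint-support geometry rescues us. I would argue by cases on which balls $x$ and $y$ belong to. The decisive case is $x \in \overline{B_\delta(v_i)}$ and $y \in \overline{B_\delta(v_j)}$ with $i \neq j$, where a direct triangle inequality gives
\[
d(x,y) \geq d(v_i, v_j) - d(x, v_i) - d(y, v_j) \geq 2\delta - d(x, v_i) - d(y, v_j),
\]
while
\[
|f_\tau(x) - f_\tau(y)| \leq |g_i(x)| + |g_j(y)| = \kappa\bigl(2\delta - d(x,v_i) - d(y, v_j)\bigr),
\]
which combine to yield $|f_\tau(x) - f_\tau(y)| \leq \kappa\, d(x,y)$. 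The remaining cases (both points in the same ball; one point inside a ball and the other outside every ball; or both points outside every ball) each reduce to a single application of the triangle inequality using that $d(y, v_i) \geq \delta$ whenever $y \notin B_\delta(v_i)$. This mixed-ball case is the only anticipated obstacle; no property of the sphere beyond the triangle inequality for $d$ enters, so the construction works verbatim in any metric space.
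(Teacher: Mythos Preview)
Your construction is correct and defines exactly the same function as the paper's proof; the verification of property~(2) and strict positivity is identical. The only difference is in the Lipschitz argument: the paper splits the minimizing geodesic between any two points into finitely many arcs, each lying either inside a single disc $D_i$ or outside all of them, and sums the variation along the pieces, whereas you argue by direct case analysis using the triangle inequality---your route is slightly more elementary (and, as you note, works in any metric space), but the two arguments are interchangeable here.
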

\begin{proof} For each $\tau\in \{-1,1\}^N$ define
\[f_{\tau}(x):=\begin{cases}
1-\tau_i\kappa (d(x,v_i)-\delta)\text{ , if $d(x,v_i)\leq \delta$}\\
1\text{ , otherwise.}
    \end{cases}\]
where $d(x,z)$ denotes the geodesic distance between points $x,z\in S^{n-1}$. The function is well defined and continuous since the discs $D_i$ of radius $\delta$ around the points $v_i$ are disjoint by our pairwise distance assumption. For any two points $z_1,z_2$ in the interior of the disc $D_i$ we have
\[|f_\tau(z_1)-f_\tau(z_2)|=\kappa|d(v_i,z_1)-d(v_i,z_2)|\leq \kappa d(z_1,z_2)\]
where the last equality used the reverse triangle inequality. Given any two points $z_1,z_2\in S^{n-1}$ we can split the short geodesic joining them into finitely many geodesic paths, each contained in the interior of a single disc or on the outside of all of them. Since the function is constant outside the union of the $D_i$'s such paths contribute zero to the variation of the function along the path and the previous paragraph implies that $\kappa$ is a valid Lipschitz constant for $f_\tau$. Our assumption that $\delta<\frac{1}{\kappa}$ guarantees that $f_\tau(v_i)=1+\tau_i\delta\kappa$ be positive for every $\tau$, ensuring that the $f_\tau$ are positive functions. Finally it is immediate from the formula that $f_\tau$ satisfies the properties in item $(2)$.\end{proof}

For a real $\kappa>0$, let $\Lambda(\kappa)\subseteq C(S^{n-1})$ be the set of positive functions with Lipschitz constant at most $\kappa$. We estimate the Kolmogorov width of $\Lambda(\kappa)$, proving the main result of the Section.

\begin{proof}[Proof of Theorem~\ref{thm:Kolmogorov_radial_1}]
    By a standard volume argument (see, for instance \cite{NewSha64:JacksonTheorem}*{Lemma 2}), the inequality $N\beta^{n-1} <1$ guarantees the existence of $N+1$ points on $S^{n-1}$ at pairwise distance at least $\beta$. Denote these points by $v_1,\ldots,v_{N+1}$ and use Lemma~\ref{lem:starshaped_add_remove_cones_new} with $\delta=\beta/2$ to construct the $2^{N+1}$ functions $f_\tau$ one for each choice of sign $\tau \in \{-1,1\}^{N+1}$. We will use these functions to bound the Kolmogorov width of $\Lambda(\kappa)$.

    Let $W\subset C(S^{n-1})$ be any subspace containing the constant functions, and let $\{g_1,\ldots,g_N\}$ be any basis for $W$. The $N\times (N+1)$ matrix of evaluations of the basis elements $g_j$ at the points $v_i$ has more columns than rows and therefore its kernel is nontrivial. Thus there exist constants $c_1,\ldots,c_{N+1}$ such that $\sum_{i=1}^{N} c_i g_j(v_i) = 0$ for every $j$. Without loss of generality we can rescale the $c_i$ and assume that $\sum_i |c_i| = 1$. 
    
    For every choice of signs $\tau$ there is an optimal choice for the approximation coefficients $a_j$ so that the following error is minimized
    \[
    d(f_\tau, W)=\left\| f_{\tau} - \sum_{j=1}^N a_j g_j \right\|_\infty = \sup_{x\in S^{n-1}} \left|f_\tau(x) - \sum_{j=1}^N a_j g_j (x) \right|.
    \]
Since $W$ contains the constant functions there is another choice of optimal coefficients $\hat{a}_j$ such that
    \[\sup_{x\in S^{n-1}} \left|f_\tau(x) - \sum_{j=1}^N a_j g_j (x) \right|= \sup_{x\in S^{n-1}} \left| (f_\tau(x) - 1) - \sum_{j=1}^N \hat{a}_j g_j (x) \right|.
    \]
In particular if $\nu = \sum_{i=1}^{N+1} w_i \delta_{v_i}$ is any signed measure on the sphere with $\sum |w_i|=1$, then the norm is bounded below by 
    \[
    \left\| f_\tau - \sum_{j=1}^N a_j g_j \right\|_\infty \geq \int_{S^{n-1}} \Big( (f_\tau(x) - 1) - \sum_{j=1}^N \hat{a}_j g_j (x) \Big) \dd \nu(x).
    \]
Using the special measure $\nu = \sum_{i=1}^{N+1} c_i \delta_{v_i}$ constructed from the kernel element we get
    \[
    \| f_\tau - \sum_{j=1}^N a_j g_j \|_\infty \geq \sum_{i=1}^{N+1} c_i (f_\tau(v_i)-1).
    \]
    Consider now the function $f_{\tau^*}$ obtained by the choice $\tau^*_i = \operatorname{sign}(c_i)$, then
    \[
    \| \rho_{L^*} - \sum_{j=1}^N a_j g_j \|_\infty \geq \sum_{i=1}^{N+1} |c_i| |(f_\tau(v_i)-1)| = \kappa \frac{\beta}{2},
    \]
    proving that $\sup_{f\in \Lambda(\kappa)}d(f,W)\geq \kappa \frac{\beta}{2}$ as claimed.
    For the final part of the claim, note that when $W:=\RR[S^{n-1}]_{\leq d}$ is the space of polynomials of degree at most $d$ restricted to the sphere then its dimension is 
    \[
    N = \binom{n+d}{d} - \binom{n+d-2}{d-2}\sim \frac{2}{(n-1)!} d^{n-1}
    \]
    and $\beta<N^{-\frac{1}{n+1}}$. 
So $N^{-\frac{1}{n-1}}$ is bounded below by some constant $\alpha$ times $\frac{n}{d}$ asymptotically in $d$, hence $\sup_{f\in \Lambda(\kappa)}d(f,W)$ is bounded below by $\frac{\alpha n}{2}\frac{\kappa}{d}$ for all sufficiently large $d$ as claimed.
\end{proof}

\section{Computation of polystar approximations via quadratures}\label{Sec: polystar_computation}

So far we have shown that polystar bodies are capable approximators and have provided convolutional formulas for their construction. In this Section we focus on the problem of how to use these formulas in practice. More precisely, we assume that the gauge/radial function $f_L$ of a starbody $L$ is given to us in the form of a black-box implementation which, given a point $x$ on the sphere, returns the value $f_L(x)$, and we wish to use this implementation to construct a good approximating polystar body.
The main result of the Section is Algorithm~\ref{alg:approx_mollified}  which constructs polystar approximations. The key components of the algorithm are a quadrature-based procedure for stably computing uniform approximations of the harmonic components of a function from samples (Theorem~\ref{thm:uniform_harmonic}) together with the remarkable Funk-Hecke formula. Describing these algorithms requires a few preliminaries discussed in the following section.

We remark that the discrete counterpart of our problem, namely the Fourier decomposition of a discrete function on the sphere and the development of efficient algorithms for that, is a well studied problem originated in \cite{DriHea94:ComputingFourier} and further developed in works such as \cites{PST98:FastStableFourier,KunPot03:FastFourier}. For a thorough overview of the history and the results in this direction, see \cite{OSRT20:FastAlgoOrthogonal}. 

\subsection{A spherical quadrature, zonal harmonics and the Funk-Hecke formula}

A quadrature rule on a compact set $M\subseteq \RR^n$ with a measure $\mu$ is a pair $(X, W_X)$, where $X \subset M$ is the finite set of nodes and $W_X:X\to \RR_+$ is a function defining the set of weights of the rule. The quadrature rule provides an estimate of $\int_{M} f(x) d\mu(x)$ as $\sum_{x\in X} W_X(x) f(x)$. We say that the quadrature is exact in degree $d$ if equality holds for every polynomial $f(x)$ of degree at most $d$.

\begin{example}It is shown in~\cite{CriVel24:HarmonicHierarchies}*{Construction 2.1}, that the following procedure yields an explicit quadrature rule for the uniform measure $\mu$ on the sphere $S^{n-1}\subset \RR^n$ for $n\geq 2$, which is exact on multivariate polynomials of degree $d=2k$. We construct the rule inductively on the dimension of the sphere as follows:
\begin{enumerate}
    \item Fix a quadrature rule on $S^1$ having $2(k+1)$ equidistant nodes and equal weights $\frac{\pi}{k+1}$.
    \item For $3\leq \ell \leq n$, suppose that $(Y,W_Y)$ is a quadrature rule on $S^{\ell-2}$, exact in degree $2k$ and let $(Z,W_Z)$ be a Gaussian quadrature rule (see Section~\ref{sec: Geg_quad}) for the weight function $\omega_{\frac{\ell-2}{2}}(t) = (1-t^2)^{\frac{\ell-3}{2}}$ on the interval $[-1,1]$, exact in degree $2k$. Define the rule $(X,W_X)$ on $S^{\ell-1}$ via 
    \begin{gather*}
        X=\left\{\left(z, \sqrt{1-z^2} \, y\right): (z,y)\in Z\times Y\right\}, \\
        W_X\left(z, \sqrt{1-z^2} \, y\right) :=W_Z(z)W_Y(y).
    \end{gather*}
\end{enumerate}
The resulting rule in $S^{n-1}$ has $2(k+1)^{n-1}$ nodes and is exact in degree $2k$ so it can be evaluated using a number of function evaluations which is of polynomial size in the degree. The explicit nature of these rules will allow us to transform error estimates on the nodes and weights of the Gaussian quadrature rules in $[-1,1]$ into error estimates for the nodes and weights on the sphere. 
\end{example}

As discussed in Section~\ref{sec: Geg_quad} it is well known that for any weight function $\omega(t)$ in $[-1,1]$ there exists a Gaussian quadrature rule with $N$ nodes which is exact on polynomials of degree at most $2N-1$. 
The effective computation of nodes $t_i$ and weights $w_i$ for such quadrature rules from a collection of orthogonal polynomials is often done via the Golub-Welsh algorithm \cite{GolWel69:QuadratureRules}*{Section 2}, which we briefly review. 
Given the weight $\omega(t)$, any sequence of polynomials $\{p_j(t)\}_{j=1}^N$ orthogonal with respect to $\langle f,g\rangle:=\int_1^1 f(t)g(t)\omega(t)dt$ satisfies a three-term recurrence relation of the form
\begin{gather*}
    p_{-1}(t) = 0, \quad p_0(t) = 1, \\
    p_j(t) = (a_j t + b_j) p_{j-1}(t) - c_j p_{j-2}(t),
\end{gather*}
for some $a_j, b_j, c_j \in \RR$ with $a_j, c_j >0$. We can write this in matrix notation as 
\[
t \, \mathfrak{p}(t) = T\, \mathfrak{p}(t) + \frac{1}{a_N} p_N(t) e_N,
\]
where $\mathfrak{p}(t)$ denotes the vector of polynomials $(p_0(t), \ldots ,p_N(t))$, $e_i$ is the $i$-th canonical basis vector, and $T$ is the \emph{tridiagonal} matrix given by 
\[
T = \begin{pmatrix}
-\frac{b_1}{a_1} & \frac{1}{a_1} & 0 & 0  & \dots & 0\\[1ex]
\frac{c_2}{a_2} & -\frac{b_2}{a_2} & \frac{1}{a_2} & 0  & \dots & 0\\[1ex]
0 & \frac{c_3}{a_3} & -\frac{b_3}{a_3} & \frac{1}{a_3} & \dots & 0\\[1ex]
\vdots & \vdots & \ddots & \ddots & \ddots & \vdots\\[1ex]
0 & \ldots & 0 & \frac{c_{N-1}}{a_{N-1}} & -\frac{b_{N-1}}{a_{N-1}} & \frac{1}{a_{N-1}} \\[1ex]
0 & \ldots & \ldots & 0 & \frac{c_{N}}{a_{N}} &-\frac{b_N}{a_N} \\
\end{pmatrix}.
\]
As a consequence, if $t_i$ is a zero of $p_N$ then $t_i \mathfrak{p}(t_i) = T \mathfrak{p}(t_i)$, so that $t_i$ is an eigenvalue of $T$. Therefore, one can use eigenvalue computation algorithms to find the roots $t_i$, which will serve as nodes of the quadrature rule. Note that if the $\{p_j(t)\}_{j=1}^N$ are orthonormal, then the matrix $T$ is similar to a symmetric matrix 
\[
J = \begin{pmatrix}
\alpha_1 & \beta_1 & 0 & 0  & \dots & 0\\[1ex]
\beta_1 & \alpha_2 & \beta_2 & 0  & \dots & 0\\[1ex]
0 & \beta_2 & \alpha_3 & \beta_3 & \dots & 0\\[1ex]
\vdots & \vdots & \ddots & \ddots & \ddots & \vdots\\[1ex]
0 & \ldots & 0 & \beta_{N-2} & \alpha_{N-1} & \beta_{N-1} \\[1ex]
0 & \ldots & \ldots & 0 & \beta_{N-1} & \alpha_N \\
\end{pmatrix},
\]
with $\alpha_j = -\frac{b_j}{a_j}$, $\beta_j = \sqrt{\frac{c_{j+1}}{a_j a_{j+1}}}$. In particular, the computation of the quadrature nodes from the entries of $J$ is well conditioned. Using the Christoffel-Darboux identity, Wilf~\cite{Wilf78:MathPhisicalSciences} deduces that
\[
w_i \mathfrak{p}(t_i)^\top \mathfrak{p}(t_i) = 1,
\]
which leads to explicit formulas for the computation of the weights $w_i$ of the quadrature rule. 

For the special case we are interested in, namely the case of Gegenbauer polynomials with weight function $\omega_{\alpha}(t) = (1-t^2)^{\alpha-\frac{1}{2}}$, the entries of $J$ for $j=1,\ldots, N$ are specified by 
\[
\alpha_j = 0, \quad \beta_j = \frac{1}{2}\sqrt{\frac{j(j+2\alpha-1)}{(j+\alpha-1)(j+\alpha)}}.
\]
allowing explicit computations. We further stress the key point: the Golub-Welsh approach allows us to compute the quadrature nodes and weights we require via eigenvalue algorithms on \emph{symmetric matrices}. This is very useful because the numerical behavior of symmetric eigenvalue computations is vastly superior to the general problem of root-solving (see~\cite{TreBau97:NLA}*{Chapter V} for details).

\subsubsection{Zonal polynomials}
The Gegenbauer polynomials introduced in Section~\ref{sec: Geg_quad} play a fundamental role in harmonic analysis on spheres and in the computation of convolutions because they can be used to explicitly describe the Christoffel-Darboux (or reproducing) kernel of the uniform measure on the sphere~\cite{ABR01:HarmonicFunctionTheory}*{Proposition 5.27, Theorem 5.29}.
More precisely, if $Z_d(t)$ is the normalized Gegenbauer polynomial defined as
\[ 
Z_d(t):=\frac{{\rm dim}(\mathcal{H}_d)}{\mu(S^{n-1}) C_d^{(\frac{n-2}{2})}(1)}C_d^{(\frac{n-2}{2})}(t),
\]
then for every $x\in S^{n-1}$ and $f\in L^2(S^{n-1},\mu)$ the equality
\[\int_{S^{n-1}} Z_d(\langle x,y\rangle)f(y)d\mu(y) = f_d(x)\]
holds where $f_d(x)$ denotes the $d$-th harmonic component of $f$. This property has many applications and in particular implies the remarkable Funk-Hecke formula below. To ease notation we will write $Z_d(x,y):=Z_d(\langle x,y\rangle)$ and refer to these functions as \emph{zonal polynomials}.

\begin{proposition}[Funk-Hecke formula]\label{Cor: Funk-Hecke}
If $u(t)=\sum_{j=0}^d\lambda_j Z_j(t)$ is the unique expression of a polynomial $u(t)$ as a combination of normalized Gegenbauer polynomials, and $f=\sum_{j=0}^{\infty}f_j$ is the harmonic decomposition of $f$, then
\[
\int_{S^{n-1}} u(\langle x,y\rangle)f(y){\rm d}\mu(y) = \sum_{j=0}^d \lambda_j f_j(x).
\]
\end{proposition}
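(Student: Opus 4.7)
The plan is to deduce the Funk–Hecke formula directly from the reproducing kernel property of the normalized Gegenbauer polynomials $Z_j$ that is recorded immediately before the statement, together with the linearity of the convolution integral.

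First, I would fix $x\in S^{n-1}$ and use the hypothesis $u(t)=\sum_{j=0}^{d}\lambda_j Z_j(t)$ together with the linearity of the integral to write
\[
\int_{S^{n-1}} u(\langle x,y\rangle)\,f(y)\,\mathrm{d}\mu(y)
\;=\;
\sum_{j=0}^{d}\lambda_j \int_{S^{n-1}} Z_j(\langle x,y\rangle)\,f(y)\,\mathrm{d}\mu(y).
\]
Here the sum is finite, so interchanging sum and integral is immediate and requires no dominated convergence argument. This reduces the problem to understanding a single convolution with a zonal polynomial $Z_j$.

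Next I would invoke the reproducing property recalled just above the statement: for every $j\in\mathbb{N}$, every $x\in S^{n-1}$, and every $f\in L^2(S^{n-1},\mu)$,
\[
\int_{S^{n-1}} Z_j(\langle x,y\rangle)\,f(y)\,\mathrm{d}\mu(y) \;=\; f_j(x),
\]
where $f_j$ denotes the $j$-th harmonic component of $f$ in its spherical harmonic decomposition. Applying this identity termwise in the previous display yields exactly
\[
\int_{S^{n-1}} u(\langle x,y\rangle)\,f(y)\,\mathrm{d}\mu(y) \;=\; \sum_{j=0}^{d}\lambda_j f_j(x),
\]
which is the claim.

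The only subtlety worth flagging is that the expression $u(t)=\sum_{j=0}^d \lambda_j Z_j(t)$ exists and is unique: this follows because $\{Z_0,Z_1,\ldots,Z_d\}$ is a basis of the space of univariate polynomials of degree at most $d$ (the $Z_j$ are nonzero scalar multiples of the Gegenbauer polynomials $C_j^{(\alpha)}$, which are orthogonal, hence linearly independent, with respect to the weight $\omega_\alpha$ on $[-1,1]$). So there is no real obstacle in the proof; it is essentially a one-line corollary of the reproducing kernel identity, and the work has already been done in establishing that identity and the Gegenbauer basis structure.
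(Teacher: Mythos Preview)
Your proof is correct and matches the paper's approach: the paper does not give an explicit proof of this proposition but states that the reproducing property of the zonal polynomials $Z_j$ ``in particular implies the remarkable Funk-Hecke formula,'' which is exactly the linearity-plus-reproducing-kernel argument you wrote out.
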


\subsection{Uniform approximation of convolutions and harmonic expansions}

Given a quadrature rule $(Y,W_Y)$ on $S^{n-1}$ and a univariate polynomial $u(t)$ which is nonnegative on $[-1,1]$
we define the \emph{approximate convolution} $\widetilde{T_u(f)}$ by the formula
\[\widetilde{T_u(f)}(x):=\sum_{y_i\in Y} W_Y(y_i)u(\langle x,y_i\rangle) f(y_i).\]
The main results of this section are quantitative estimates guaranteeing that the approximate convolution and the true convolution are \emph{uniformly close} when the quadrature is exact on forms of sufficiently high degree and $f$ is Lipschitz continuous. 

In particular we obtain an algorithm to estimate the $d$-th harmonic component of such a function in $S^{n-1}$ with a uniform error bound of $\epsilon$ and prove in Corollary~\ref{cor:algo1} that this problem can be solved in polynomial time in $d$ for each fixed dimension $n$.  This provides a way to explicitly create polygauge/polyradial approximations of very complicated starbodies. The following Proposition proves the correctness of Algorithm \ref{alg:approx_fourier}, as stated in the introduction, and quantifies the constants $c_{n,m}$ appearing in the description of the algorithm.
\begin{proposition}\label{thm:uniform_harmonic} 
Let $d$ be a positive integer and let $f\in L^2(S^{n-1},\mu)$ be a function. If for some $\varepsilon>0$ there exists a polynomial $p_m$ of degree $m$ with
\[
\|f-p_m\|_{\infty}<\frac{\varepsilon}{2\sqrt{\mu(S^{n-1})\dim(\mathcal{H}_d)}},
\]
and $(X,W_X)$ is a quadrature rule exact in degree $\max(m+d,2d)$, then the polynomial
\[
\widetilde{f}_d(x):=\sum_{z_i\in X} W_X(z_i)f(z_i)Z_d(\langle x,z_i\rangle)
\]
of degree $\leq d$ is a uniform approximation to the $d$-th harmonic component $f_d$ of $f$, satisfying the inequality
$\|\widetilde{f}_d-f_d\|_{\infty}<\varepsilon$.
\end{proposition}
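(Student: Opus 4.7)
The plan is to split $f = p_m + g$, where $g := f - p_m$ has sup-norm strictly less than $\varepsilon/(2\sqrt{\mu(S^{n-1})\dim\mathcal{H}_d})$ by hypothesis, and to decompose $f_d - \widetilde{f}_d$ into two terms governed by two distinct exactness properties of the quadrature rule.

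First, I would invoke the reproducing-kernel identity $f_d(x) = \int_{S^{n-1}} Z_d(\langle x,y\rangle)\,f(y)\,d\mu(y)$ recalled in the previous subsection, so that by linearity
\[
f_d(x) - \widetilde{f}_d(x) = \bigl((p_m)_d(x) - \widetilde{(p_m)_d}(x)\bigr) + \bigl(g_d(x) - \widetilde{g}_d(x)\bigr).
\]
For the polynomial piece, note that for each fixed $x \in S^{n-1}$ the integrand $y \mapsto Z_d(\langle x,y\rangle)\,p_m(y)$ is a polynomial in $y$ of degree at most $d+m$; since the quadrature is exact in degree $d+m$, the integral and the quadrature sum coincide and this contribution vanishes identically.

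For the residual, I would bound $|g_d(x)|$ and $|\widetilde{g}_d(x)|$ separately by $\|g\|_\infty\sqrt{\dim\mathcal{H}_d}$ via Cauchy--Schwarz applied once in the continuous and once in the discrete setting. On the continuous side,
\[
|g_d(x)| \leq \|g\|_\infty \int_{S^{n-1}} |Z_d(\langle x,y\rangle)|\,d\mu(y) \leq \|g\|_\infty\sqrt{\mu(S^{n-1})}\,\|Z_d(\cdot,x)\|_{L^2},
\]
where the reproducing property gives $\|Z_d(\cdot,x)\|_{L^2}^2 = Z_d(\langle x,x\rangle) = Z_d(1) = \dim\mathcal{H}_d/\mu(S^{n-1})$. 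On the discrete side, Cauchy--Schwarz applied to the quadrature sum yields
\[
|\widetilde{g}_d(x)| \leq \sqrt{\textstyle\sum_i W_X(z_i)\,g(z_i)^2}\,\cdot\,\sqrt{\textstyle\sum_i W_X(z_i)\,Z_d(\langle x,z_i\rangle)^2};
\]
the first factor is at most $\|g\|_\infty\sqrt{\mu(S^{n-1})}$ by exactness on constants, and the second factor equals $\sqrt{Z_d(1)}$ because $y \mapsto Z_d(\langle x,y\rangle)^2$ has degree $2d$ and the quadrature is exact at that degree. Thus both bounds coincide.

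Combining these estimates gives $\|g_d - \widetilde{g}_d\|_\infty \leq 2\|g\|_\infty\sqrt{\dim\mathcal{H}_d} < \varepsilon/\sqrt{\mu(S^{n-1})} \leq \varepsilon$, where the last inequality uses $\mu(S^{n-1}) \geq 1$ for all $n \geq 2$. The \emph{main subtlety} is that two distinct exactness thresholds of the quadrature are needed simultaneously: degree $d+m$ to annihilate the polynomial contribution, and degree $2d$ to reproduce the $L^2$-norm of the zonal kernel on the discrete side. This is precisely why the hypothesis requires exactness in degree $\max(d+m,2d)$, and not merely in a single degree.
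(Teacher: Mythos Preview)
Your approach is essentially identical to the paper's: split off $p_m$, use exactness in degree $m+d$ to annihilate the polynomial contribution, then bound the residual via Cauchy--Schwarz on both the continuous and the discrete side, invoking exactness in degree $2d$ to evaluate $\sum_i W_X(z_i)Z_d(x,z_i)^2$ exactly. The organization and the key ideas match.

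One slip at the very end: the claim ``$\mu(S^{n-1})\geq 1$ for all $n\geq 2$'' is false---the surface area $\mu(S^{n-1})=2\pi^{n/2}/\Gamma(n/2)$ tends to $0$ and already drops below $1$ around $n=20$. Your bound $2\|g\|_\infty\sqrt{\dim\mathcal{H}_d}$ is in fact the correct one given the paper's normalization $Z_d(1)=\dim(\mathcal{H}_d)/\mu(S^{n-1})$; the paper's proof writes the same quantity as $2\eta\sqrt{\mu(S^{n-1})\dim(\mathcal{H}_d)}$, implicitly using $Z_d(x,x)=\dim(\mathcal{H}_d)$, which is off by a factor of $\sqrt{\mu(S^{n-1})}$ and only then matches the constant in the hypothesis. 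So the discrepancy you noticed is a normalization inconsistency in the statement/proof, not a flaw in your argument; you should simply not invoke $\mu(S^{n-1})\geq 1$ and instead note that the hypothesis as stated is stronger than what your (correct) estimate requires when $\mu(S^{n-1})>1$, and that for the general case the constant in the hypothesis should read $\varepsilon/(2\sqrt{\dim\mathcal{H}_d})$.
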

\begin{proof} 
Since the quadrature rule $(X,W_X)$ is exact in degree $m+d$, the equality
\[
\int_{S^{n-1}} p_m(z)Z_d(x,z) \d\mu(z) = \sum_{z_i\in X} W_X(z_i)p_m(z_i)Z_d(x,z_i)
\]
holds for every $x\in S^{n-1}$.
Therefore, for every $x\in S^{n-1}$ 
the triangle inequality yields
\begin{align*}
    |\widetilde{f}_d(x)-f_d(x)| &\leq \left|\widetilde{f}_d(x)-\!\sum_{z_i\in X} \! W_X(z_i)p_m(z_i)Z_d(x,z_i)\right|+\left|f_d(x)-\!\int_{S^{n-1}} \! p_m(z)Z_d(x,z)\d\mu(z)\right| \\
    &= \left|\sum_{z_i\in X} \! W_X(z_i)(f(z_i)-p_m(z_i))Z_d(x,z_i)\right|+\left|\int_{S^{n-1}} \!\!(f(z)-p_m(z))Z_d(x,z)\d\mu(z)\right|.
\end{align*}
If $\|f-p_m\|_{\infty}<\eta$ for some $\eta>0$, then by the triangle inequality we get 
\begin{align*}
    |\widetilde{f}_d(x)-f_d(x)| &\leq \eta\left(\sum_{z_i\in X} W_X(z_i)|Z_d(x,z_i)|+\int_{S^{n-1}} |Z_d(x,z)|\d\mu(z)\right) \\
    &\leq \eta\sqrt{\mu(S^{n-1})} \left( \left( \sum_{z_i\in X} W_X(z_i)Z_d(x,z_i)^2 \right)^{\frac{1}{2}} + \left( \int_{S^{n-1}} Z_d(x,z)^2 \d\mu(z)\right)^{\frac{1}{2}} \right),
\end{align*}
where the last inequality follows by Cauchy-Schwarz.
Using the fact that the quadrature rule is exact in degree $2d$, and the reproducing property of the zonal polynomials, we conclude that 
\[
|\widetilde{f}_d(x)-f_d(x)| \leq \eta\sqrt{\mu(S^{n-1}) 4 Z_d(x,x)} = 2\eta\sqrt{\mu(S^{n-1})\dim(\mathcal{H}_d)}.
\]
Therefore, by taking $\eta<\frac{\varepsilon}{2\sqrt{\mu(S^{n-1}) \dim(\mathcal{H}_d)}}$ we get $\|\widetilde{f}_d-f_d\|_{\infty}<\varepsilon$, as claimed.
\end{proof}

The hypotheses of Theorem \ref{thm:uniform_harmonic} are satisfied in particular for Lipschitz functions.
\begin{theorem} \label{cor:algo1}
Let $\varepsilon>0$ be given. If $f\in L^2(S^{n-1},\mu)$ is a continuous function with Lipschitz constant $\kappa$, then Algorithm \ref{alg:approx_fourier} gives an $\varepsilon$-uniform polynomial approximation to the $d$-th harmonic component of $f$, with time complexity polynomial in $d$, in fixed dimension $n$.
\end{theorem}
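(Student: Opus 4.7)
The plan is to combine Theorem~\ref{thm:NewmannShapiro_operator}, which provides a Lipschitz function with a quantitative uniform polynomial approximant, with Proposition~\ref{thm:uniform_harmonic}, which turns such an approximant plus a sufficiently exact quadrature rule into a uniform approximation of the $d$-th harmonic component. The only remaining task is to balance the parameters so that the degree $m$ of the auxiliary approximant and the size of the quadrature rule both remain polynomial in $d$, and to justify that every step of Algorithm~\ref{alg:approx_fourier} can be executed in polynomial time for fixed $n$.

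First, I would apply Theorem~\ref{thm:NewmannShapiro_operator} to guarantee the existence, for every even $m$, of a polynomial $p_m$ of degree $m$ with $\|f-p_m\|_\infty \leq C_n\,\kappa/m$ for an explicit constant $C_n$ depending only on $n$. To satisfy the hypothesis of Proposition~\ref{thm:uniform_harmonic}, it suffices to take
\[
m \;\geq\; \frac{2C_n\,\kappa\,\sqrt{\mu(S^{n-1})\,\dim(\mathcal{H}_d)}}{\varepsilon}.
\]
Since $\dim(\mathcal{H}_d)=\binom{n+d-2}{d}+\binom{n+d-3}{d-1}=O(d^{n-2})$ at fixed $n$, the required $m$ is $O\!\left(\kappa\,d^{(n-2)/2}/\varepsilon\right)$, hence polynomial in $d$. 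Note that the approximant $p_m$ is only used to invoke Proposition~\ref{thm:uniform_harmonic}; Algorithm~\ref{alg:approx_fourier} itself never constructs or evaluates $p_m$.

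Next, I would pick a quadrature rule $(X,W_X)$ on $S^{n-1}$ that is exact in degree $\max(m+d,2d)$ using the explicit construction recalled in the paper: the product rule obtained by iterating a Gegenbauer Gaussian quadrature in the radial coordinate over equispaced nodes on $S^1$. The number of nodes is $O((m+d)^{n-1})$, which is polynomial in $d$ at fixed $n$. The Golub--Welsh procedure computes the one-dimensional nodes and weights as eigenpairs of a symmetric tridiagonal matrix of size polynomial in $d$, and hence in polynomial time; combining them with the equispaced $S^1$ rule yields $(X,W_X)$ in polynomial time overall.

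Finally, with $f$ evaluated at each of the $|X|=\operatorname{poly}(d)$ quadrature nodes (polynomially many black-box calls), Algorithm~\ref{alg:approx_fourier} outputs
\[
\widetilde{f}_d(x) \;=\; \sum_{z_i\in X} W_X(z_i)\, f(z_i)\, Z_d(\langle x,z_i\rangle),
\]
where each zonal polynomial $Z_d$ is evaluated via the Gegenbauer three-term recurrence~\eqref{eq:Gegenbauer} in $O(d)$ operations. Assembling $\widetilde{f}_d$ as a polynomial in $x$ thus costs polynomially many arithmetic operations in $d$. Proposition~\ref{thm:uniform_harmonic}, applied with $p_m$ from the first step, guarantees $\|\widetilde{f}_d-f_d\|_\infty<\varepsilon$, giving the desired $\varepsilon$-uniform approximation. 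The main care point, rather than a conceptual obstacle, is simply verifying that the factor $\sqrt{\dim(\mathcal{H}_d)}$ forced by Proposition~\ref{thm:uniform_harmonic} only inflates $m$ polynomially in $d$ at fixed $n$; since $\dim(\mathcal{H}_d)=O(d^{n-2})$, this is immediate and the complexity claim follows.
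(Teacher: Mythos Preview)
Your proof is correct and follows essentially the same approach as the paper: invoke Theorem~\ref{thm:NewmannShapiro_operator} to obtain the auxiliary approximant $p_m$, choose $m$ so that the hypothesis of Proposition~\ref{thm:uniform_harmonic} is met, and then observe that the required quadrature has polynomially many nodes in $d$. Your write-up in fact supplies more detail than the paper's on the complexity side (node counts, Golub--Welsh, the three-term recurrence for $Z_d$, and the observation that $p_m$ is never actually computed), all of which is accurate.
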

\begin{proof} 
By Theorem \ref{thm:NewmannShapiro_operator}, there exists a polynomial $p_m$ of degree $m$ with $\|f-p_m\|_\infty \leq \frac{\pi (n-2)}{\sqrt{2}}\, \frac{\kappa}{m}$. In particular, we can choose $m = \left\lceil \frac{\pi (n-2) \kappa \sqrt{2\mu(S^{n-1})\dim(\mathcal{H}_d)}}{\varepsilon}\right\rceil$ and build a quadrature rule exact in degrees $m+d$ and $2d$ in polynomial time in $d$. The resulting harmonic approximation is $\varepsilon$-uniform by Theorem~\ref{thm:uniform_harmonic}.
\end{proof}
As a consequence of Corollary \ref{cor:algo1}, we get $c_{\kappa,d,n,m} = \frac{\pi(n-2)\kappa}{m} \sqrt{2\mu(S^{n-1})\dim(\mathcal{H}_d)}$ for the uniform bound in Algorithm \ref{alg:approx_fourier}.

\begin{remark}
Algorithm \ref{alg:approx_fourier} is polynomial in fixed dimension $n$, but it scales exponentially as the dimension increases. In fact, since it relies on quadrature rules there is the fundamental constraint that no quadrature rule having less than $\binom{n-1+k}{k}$ points is exact in degree $2k$ on $S^{n-1}$. In practice, our approach is computationally tractable only for spheres of small dimension. Note, however, that the algorithm does not require any optimization but only numerical computation.
\end{remark}

Putting together Algorithm \ref{alg:approx_fourier} and the Funk-Hecke formula from Corollary \ref{Cor: Funk-Hecke}, we obtain Algorithm \ref{alg:approx_mollified} which provides an efficient approximation of a Lipschitz function $f$ following Theorem \ref{thm:NewmannShapiro_operator}. Figure \ref{fig:low_deg_cube} displays polyradial bodies of degrees $5$, $10$, $20$ obtained from mollified approximations (using Algorithm \ref{alg:approx_mollified}) of the radial function of the cube.
\begin{algorithm}[ht!]
    \caption{Compute a mollified approximation of $f$ by a polynomial of degree $d$}
    \label{alg:approx_mollified}
    \textsc{Input:} $f \in L^2(S^{n-1},\mu)$ with spherical Lipschitz constant $\kappa$, $d=2k>0$, 
    $m>0$.\\
    \textsc{Output:} $\widetilde{T_{u_{2k}}f}_d$, so that $\|f - \widetilde{T_{u_{2k}}f}_d\|_\infty<c'_{\kappa,d,n,m}$ for some $c'_{\kappa,d,n,m}>0$.
    \begin{algorithmic}[1]
    \For{$j = 1,\dots, d$}
        \State $\lambda_j \gets $ coefficient so that $u_{2k}(t) =$ \eqref{eq:u_k_optimal} $= \sum_{j=1}^d \lambda_j Z_j(t)$ \label{step_algo:coeffs_u}
        \State $\widetilde{f}_j \gets$ output of Algorithm \ref{alg:approx_fourier} with input $f$, $j$, $m$
    \EndFor    
    \State \Return $\widetilde{T_{u_{2k}}f}_d \gets \sum_{j=1}^d \lambda_j \widetilde{f}_j$
    \end{algorithmic}
\end{algorithm}
\begin{figure}[ht]
    \centering
    \includegraphics[height=3.5cm]{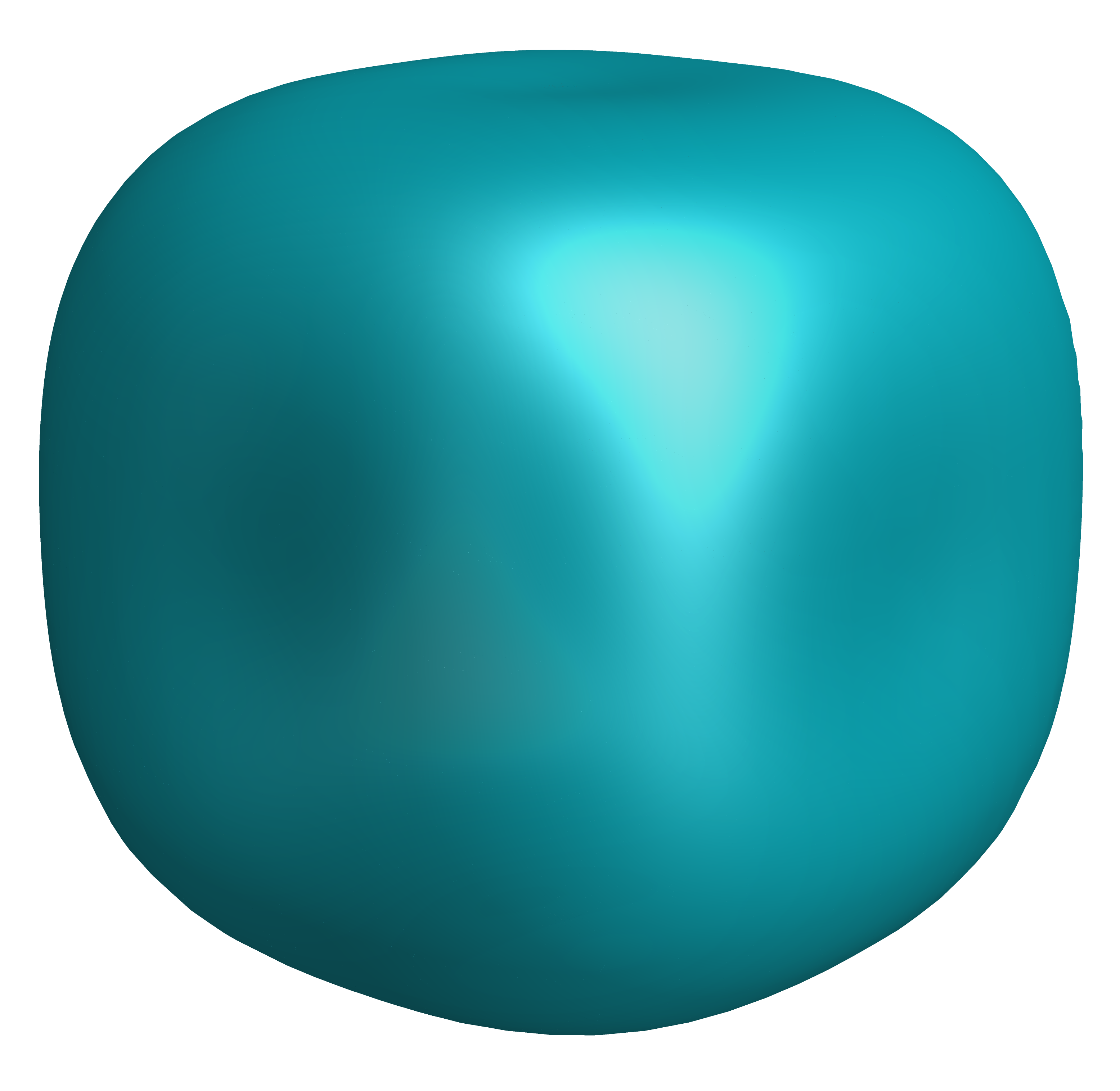}
    \;
    \includegraphics[height=3.5cm]{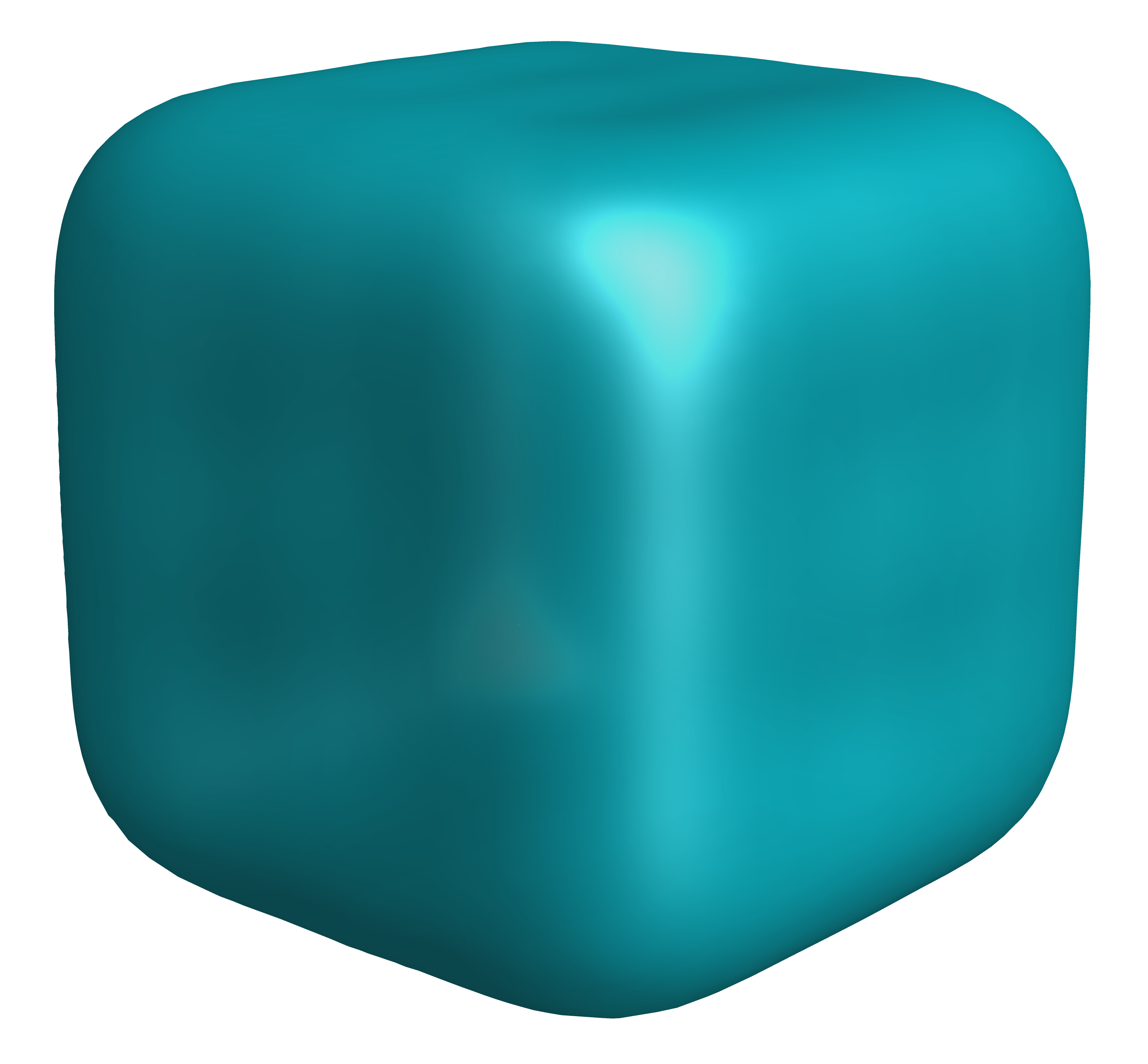}
    \;
    \includegraphics[height=3.5cm]{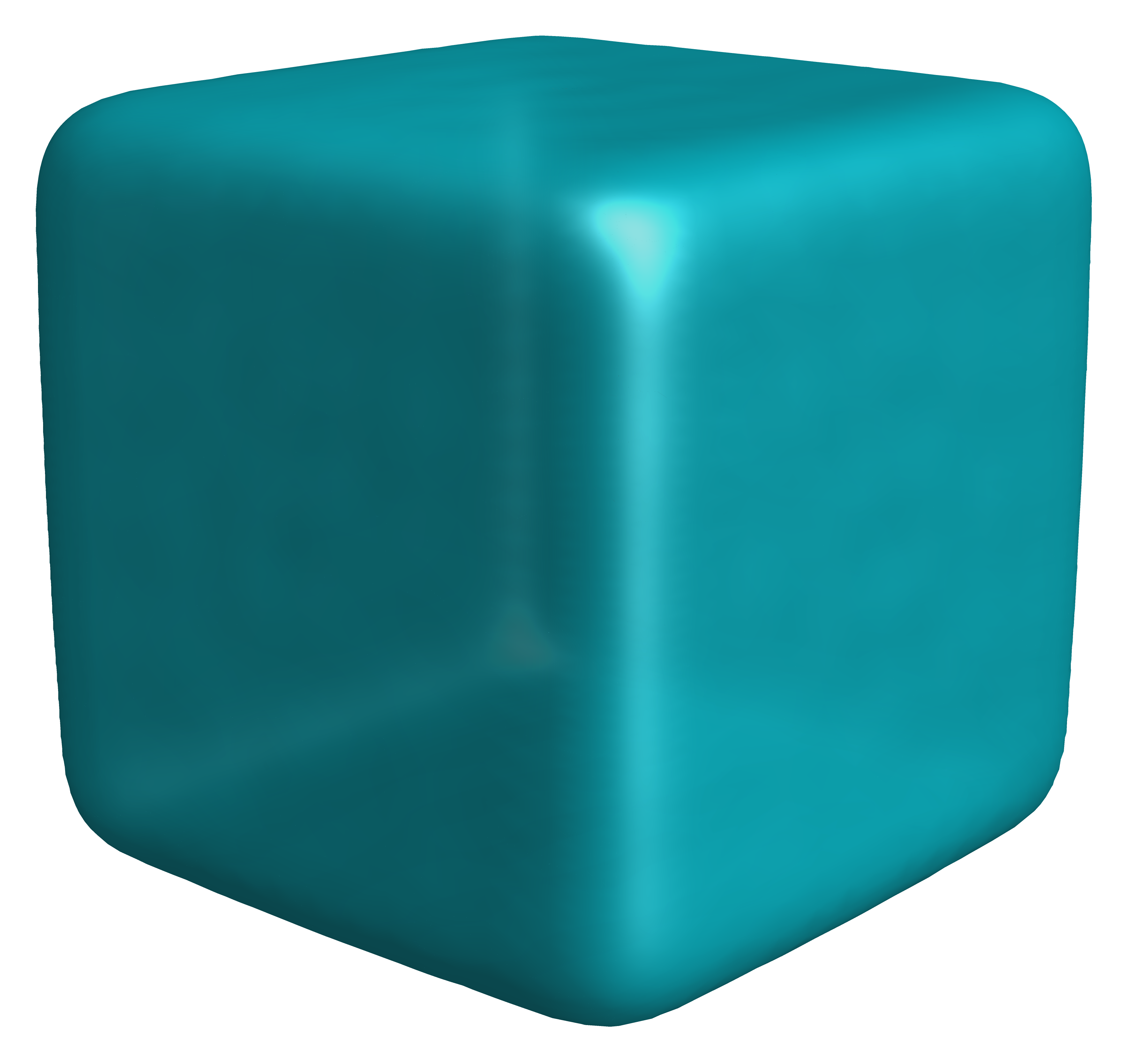}
    \caption{Polyradial bodies of degrees $5$, $10$, $20$ from left to right, computed using Algorithm \ref{alg:approx_mollified} from the radial function of the cube.}
    \label{fig:low_deg_cube}
\end{figure}
Note that in Step \ref{step_algo:coeffs_u}, we assume that the decomposition of $u_{2k}$ as a linear combination of zonal polynomials is given. This can be achieved by a simple change of basis for univariate polynomials of degree $2k$.

Our final theorem shows that the approximate convolutions of nonnegative functions, an in particular the gauge or radial functions of polystar approximations are sums-of-squares in $\RR[S^{n-1}]$.

\begin{theorem} \label{polySOS} Assume $u(t)$ is a univariate polynomial which is nonnegative on $[-1,1]$. If $f$ is nonnegative at the points of the quadrature rule $(Y,W_Y)$ then the output $\widetilde{T_{u}(f)}$ of Algorithm \ref{alg:approx_mollified} is a sum of squares in $\RR[S]$.
\end{theorem}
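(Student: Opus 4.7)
The plan is to reduce the statement to a classical theorem of Lukács on univariate nonnegative polynomials, applied pointwise over the quadrature nodes, after recognizing that the output of Algorithm~\ref{alg:approx_mollified} is nothing but the approximate convolution $\widetilde{T_u(f)}$.

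First I would rewrite the output of Algorithm~\ref{alg:approx_mollified}. Assuming a common quadrature $(Y,W_Y)$ is used for all harmonic degrees $j$ (which we may arrange by taking it exact in the maximum required degree), the Funk--Hecke expansion $u(t)=\sum_{j=0}^{d}\lambda_j Z_j(t)$ yields
\[
\widetilde{T_{u}(f)}(x)\;=\;\sum_{j=0}^{d}\lambda_j\,\widetilde{f}_j(x)\;=\;\sum_{j=0}^{d}\lambda_j\sum_{y_i\in Y}W_Y(y_i)f(y_i)Z_j(\langle x,y_i\rangle)\;=\;\sum_{y_i\in Y}W_Y(y_i)f(y_i)\,u(\langle x,y_i\rangle).
\]
So it suffices to prove that this last expression is a sum of squares in $\RR[S^{n-1}]$.

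Next I would invoke the Markov--Lukács theorem: any univariate polynomial $u(t)$ which is nonnegative on $[-1,1]$ admits a decomposition $u(t)=a(t)^2+(1-t^2)b(t)^2$ when $\deg u$ is even, and $u(t)=(1-t)a(t)^2+(1+t)b(t)^2$ when $\deg u$ is odd. The crucial observation is that for any fixed $y_i\in S^{n-1}$, when we specialize $t=\langle x,y_i\rangle$ in the coordinate ring $\RR[S^{n-1}]$, both boundary factors become sums of squares:
\[
1-\langle x,y_i\rangle \;=\; \tfrac{1}{2}\|x-y_i\|^2,\qquad 1+\langle x,y_i\rangle \;=\; \tfrac{1}{2}\|x+y_i\|^2,
\]
since $\|x\|^2=\|y_i\|^2=1$ holds on the sphere. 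Therefore $(1-t^2)$ and the linear factors $(1\pm t)$ all become SOS upon pullback, and so $u(\langle x,y_i\rangle)$ is a sum of squares in $\RR[S^{n-1}]$ for each $y_i$.

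Finally, by the positivity of the quadrature weights $W_Y(y_i)>0$ and the hypothesis that $f(y_i)\geq 0$ for every $y_i\in Y$, the coefficients $W_Y(y_i)f(y_i)$ in the sum displayed above are all nonnegative. Hence $\widetilde{T_u(f)}$ is a conic combination of SOS elements, and the class of sums of squares is closed under nonnegative linear combinations, so $\widetilde{T_u(f)}$ is itself a sum of squares in $\RR[S^{n-1}]$. The only subtle point I anticipate is the bookkeeping in the first paragraph: if the algorithm instantiates different quadrature rules for different harmonic degrees, one must either argue they can be taken uniform, or apply the Lukács decomposition separately to each $\lambda_j Z_j$-block (which still works since the zonal expansion already encodes $u$).
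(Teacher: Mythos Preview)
Your proof is correct and follows essentially the same approach as the paper: express the approximate convolution as a sum over quadrature nodes, apply a Luk\'acs-type decomposition to $u(t)$, show the boundary factor becomes SOS after substituting $t=\langle x,y_i\rangle$ on the sphere, and conclude by conic closure. The only difference is the SOS certificate for the boundary factor---the paper writes $u(t)=s_0(t)+(1-t^2)s_1(t)$ uniformly and shows $1-\langle x,z\rangle^2=\sum_{j=2}^{n}\langle x,q_j\rangle^2$ via an orthonormal basis extension of $z$, whereas you split into even/odd degree and use the slightly slicker identity $1\pm\langle x,y_i\rangle=\tfrac{1}{2}\|x\pm y_i\|^2$; both work equally well.
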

\begin{proof} Since $u(t)$ is nonnegative in $[-1,1]$ there exist univariate sums of squares $s_0(t)$, $s_1(t)$ such that $u(t)=s_0(t)+(1-t^2)s_1(t)$ so the approximate convolution at a point $x\in S^{n-1}$ is given by
\[\widetilde{T_{u}(f)}(x)=\sum_{y_i\in Y} \left(s_0(\langle x,y_i\rangle) + (1-\langle x,y_i\rangle^2)s_1(\langle x,y_i\rangle)\right)W_Y(y_i)f(y_i)\]
We claim that for any $z\in S^{n-1}$ the function $\phi_z(x):=1-\langle x,z\rangle^2$ is a sum of squares in $\RR[S^{n-1}]$. This property implies the desired conclusion because the sums-of-squares are closed under products and conical combinations.
To verify the claim suppose $z,q_2,\dots, q_n$ are any orthonormal basis of $\RR^n$. By the pythagorean theorem we have that for any $x\in \RR^n$ 
\[\|x\|^2 = \langle x,z\rangle^2 +\sum_{j=2}^n \langle x,q_j\rangle^2,\]
hence $1-\langle x,z\rangle^2=\sum_{j=2}^n \langle x,q_j\rangle^2$ is a sum-of-squares in the coordinate ring $\RR[S^{n-1}]$.
\end{proof}

\section{Applications and Examples}
\label{Sec: polystar_applications}
In this Section we discuss two applications of polystar bodies. The philosophy behind the whole approach is that certain invariants are easier to compute on polystar bodies than on general starbodies. If these invariants depend continuously on the body $E$ then the easily computable invariant of a polystar approximation of $E$ gives a good approximation for the true invariant of $E$. To illustrate this philosophy we focus on two concrete problems:
\begin{enumerate}
    \item The computation of intersection bodies and largest slices. Theorem~\ref{thm: polystar_intersection_body} shows that the intersection body of a polyradial body is itself polyradial. Section~\ref{subsec:intersectino_bodies} contains images of intersection bodies obtained via this mechanism, and the estimate of the largest volume slice of the associated starbodies. The examples show that the method is applicable beyond the highly structured sets such as polytopes where other methods are available~\cite{BBMS22:IntBodiesPolytopes}.

    \item\label{item:width} The computation of width. Theorem~\ref{polySOS} shows that the width of a polygauge body can be computed via a sequence of semidefinite programs of increasing degrees.
    Section \ref{subsec:width} computes numerically the width of some convex bodies using approximations of the gauge function of their polar bodies.
\end{enumerate}

In both problems, we are interested in maximizing a positive function on the sphere, approximating it with a polynomial. In practice, this can be done using  sum-of-squares hierarchies \cites{Reznick95:Hilbert17,FF21:SOSSphere}. The approximation of the gauge/radial function on $S^{n-1}$ by a polynomial of degree $d$ has error of the order $\frac{n}{d}$, while the approximation of the maximum of a polynomial of degree $d$ using the sum-of-squares hierarchy at level $\ell$ has error $\frac{n^2}{\ell^2}$ when $d\leq 2n$ \cite{FF21:SOSSphere}, and error $\frac{n}{\ell}$ when $d > 2n$, for $\ell$ large enough (original result due to \cite{Reznick95:Hilbert17}, then elaborated in \cites{Faybusovich04:GlobalOpt,DohWeh12:ConvergenceSDP}).

Our algorithms are implemented in \texttt{Python} \cite{PythonV3} and available at 
\begin{center}
    \url{https://github.com/ChiaraMeroni/polystar_bodies}.
\end{center}
For the maximization of high degree polynomials and for plots, we rely on \texttt{Mathematica} \cite{MathematicaV14_1}.

\subsection{Polyradial intersection bodies}\label{subsec:intersectino_bodies}

Recall that the intersection body $IL$ of a starbody $L\subset\RR^n$ is the starbody with radial function 
\[
\rho_{IL}(x) = \vol (L \cap x^\perp ) = \frac{1}{n-1} \int_{S^{n-1}\cap x^\perp} \rho_L(y)^{n-1} \d\mu(y).
\]

\begin{theorem} \label{thm: polystar_intersection_body} If $L\subseteq \RR^n$ is a polyradial body then the intersection body $IL\subseteq \RR^n$ is also polyradial.
\end{theorem}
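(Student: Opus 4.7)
The plan is to exploit the fact that the spherical Radon transform $\mathcal{R}$ acts diagonally on the orthogonal decomposition of $L^2(S^{n-1},\mu)$ into spherical harmonics. More precisely, a classical theorem of Funk (in its higher-dimensional form, see e.g.\ the treatment in Groemer or in Atkinson--Han) states that for every $j\geq 0$ there exists a constant $c_j\in\RR$ such that $\mathcal{R}(h)=c_j h$ for every $h\in \mathcal{H}_j$. This is the analogue of the Funk--Hecke formula of Proposition~\ref{Cor: Funk-Hecke} for the (singular) kernel supported on the equator $\{\langle x,y\rangle=0\}$, and it says in particular that $\mathcal{R}$ maps the finite dimensional space of polynomials of degree $\leq d$ on $S^{n-1}$ into itself.

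Granting this, the argument is short. Since $\rho_L$ is the restriction of a polynomial to $S^{n-1}$, so is $\rho_L^{\,n-1}$. Write its (finite) harmonic decomposition as
\[
\rho_L^{\,n-1} \;=\; \sum_{j=0}^{D} h_j,\qquad h_j\in \mathcal{H}_j,
\]
where $D\leq (n-1)\deg(\rho_L)$. Applying $\mathcal{R}$ term-by-term and using Funk's theorem gives
\[
\rho_{IL}(x) \;=\; \frac{1}{n-1}\,\mathcal{R}\!\left(\rho_L^{\,n-1}\right)(x) \;=\; \frac{1}{n-1}\sum_{j=0}^{D} c_j\,h_j(x),
\]
which is a finite sum of spherical harmonics, hence the restriction to $S^{n-1}$ of a polynomial on $\RR^n$ of degree at most $(n-1)\deg(\rho_L)$. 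Therefore $IL$ is polyradial.

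The only nontrivial input is Funk's diagonalization theorem for $\mathcal{R}$; this is the main obstacle in the sense of being the one classical fact one must invoke, and it can be justified by observing that $\mathcal{R}$ commutes with the action of the orthogonal group $O(n)$ on $C(S^{n-1})$ and therefore, by Schur's lemma, must act as a scalar on each irreducible $O(n)$-module $\mathcal{H}_j$. Everything else is routine: polynomiality of $\rho_L^{n-1}$ follows from the ring structure on $\RR[S^{n-1}]$, and the finiteness of the harmonic expansion follows from the inclusion $\RR[S^{n-1}]_{\leq d}=\bigoplus_{j\leq d}\mathcal{H}_j$ stated earlier in the paper. One can additionally read off an explicit degree bound $\deg(\rho_{IL})\leq (n-1)\deg(\rho_L)$, and compute the scalars $c_j$ in closed form in terms of Gegenbauer or beta-function values if desired.
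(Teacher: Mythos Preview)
Your proof is correct and follows essentially the same route as the paper's: both observe that $\rho_L^{\,n-1}$ has a finite harmonic expansion and that the spherical Radon transform acts as a scalar $c_j$ on each $\mathcal{H}_j$, so $\rho_{IL}$ is again a finite combination of spherical harmonics. Your version adds the explicit degree bound $\deg(\rho_{IL})\leq (n-1)\deg(\rho_L)$ and the Schur-lemma justification for the diagonalization of $\mathcal{R}$, which the paper leaves implicit under the phrase ``the Funk--Hecke formula''.
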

\begin{proof} If $\rho_L$ is a regular function in $S^{n-1}$ then so is $\rho_L^{n-1}$ and in particular this function has a finite harmonic decomposition
$\rho_L^{n-1}(x)=\sum_{j=0}^M p_j(x)$.
The radial function of the intersection body can be expressed from $\rho_L^{n-1}(x)$ via the spherical Radon transform $\mathcal{R}$ in $L^2(S^{n-1},\mu)$ as
\[
\rho_{IL}(x)=\mathcal{R}\left( \frac{1}{n-1} \rho_L^{n-1} \right)(x)= \frac{1}{n-1}\sum_{j=0}^M c_j p_j(x),
\]
where the last equality follows from the linearity of $\mathcal{R}$ and the Funk-Hecke formula, for a certain set of coefficients $(c_j)_{j\in \mathbb{N}}$ depending only on the dimension $n$.
\end{proof}

\begin{remark} 
For $n=3$ (see \cite{Funk13}), the coefficients $c_j$ such that $\mathcal{R}(f) = \sum c_j f_j$ as in the proof of Theorem \ref{thm: polystar_intersection_body} satisfy $c_j=P_j(0)$, where $P_j$ is the Legendre polynomial of degree $j$ and in particular
\[
P_j(0) = \begin{cases}
    (-1)^\frac{j}{2} \frac{(j-1)!!}{j!!}, & j \hbox{ even},\\
    0, & j \hbox{ odd}.
\end{cases}
\]
For the sake of clarity, we provide in Algorithm \ref{alg:IB} explicit pseudocode for the computation of a polynomial approximation of the intersection body of $L\subset\RR^3$. The algorithm does not require any optimization but only numerical computation.
\end{remark}
\begin{algorithm}[ht!]
    \caption{Compute a mollified polynomial approximation of the intersection body of $L\subset\RR^3$}
    \label{alg:IB}
    \textsc{Input:} $\rho_L$ radial function of $L$, $d=2k>0$, $m>0$.\\
    \textsc{Output:} $(\widetilde{\rho_{IL}})_d$, the degree $d$ mollified polynomial approximation of the radial function of the intersection body $IL$ of $L$.
    \begin{algorithmic}[1]
    \For{$j = 2,\dots, d=2k$ even}
        \State $\lambda_j \gets $ coefficient so that $u_{2k}(t) =$ \eqref{eq:u_k_optimal} $= \sum_{j=1}^d \lambda_j Z_j(t)$
        \State $P_j \gets (-1)^\frac{j}{2} \frac{(j-1)!!}{j!!}$
        \State $\widetilde{f}_j \gets$ output of Algorithm \ref{alg:approx_fourier} with input $\frac{1}{2}\rho_L^2$, $j$, $m$
    \EndFor    
    \State \Return $(\widetilde{\rho_{IL}})_d \gets \sum_{j=2}^d P_j \lambda_j \widetilde{f}_j$ sum over even $j$
    \end{algorithmic}
\end{algorithm}

\subsubsection{Some intersection bodies in 3D}\label{sec:examples}
We exhibit in this section concrete examples that were computed using Algorithms \ref{alg:approx_fourier}, \ref{alg:approx_mollified} above on radial functions of three-dimensional starbodies. Figures \ref{fig:ex_cube}, \ref{fig:ex_cyl&co}, \ref{fig:ex_elliptope} give visual evidence to support the fact that the mollified approximation is an improvement over the simple truncation of Fourier expansion of a function. 
Moreover, we use our tools to approximate the intersection body of a starbody, via Algorithm \ref{alg:IB}. This allows us to estimate the slice of the original convex body with largest volume, via maximization of the (approximate) radial function of the intersection body:
\[
\max_{x\in S^{n-1}} \vol(L\cap x^\perp) = \max_{x\in S^{n-1}} \rho_{IL}.
\]
In practice, in our examples below we maximize the polynomial $(\widetilde{\rho_{IL}})_d$ from Algorithm \ref{alg:IB} using \texttt{Mathematica}'s command \texttt{NMaximize}. We report the estimated optimizer $x$ defining the largest volume slice $L\cap x^\perp$.

\begin{example}\label{ex:cube}
    Consider the standard cube $P\subset\RR^3$ with vertices $(\pm 1, \pm 1, \pm 1)$, displayed in Figure \ref{fig:ex_cube}, left. Its radial function reads 
    \[
    \rho_P(x) = \min_{i=1,2,3} \left\lbrace \frac{1}{|x_i|} \right\rbrace \quad \hbox{ for } x\in S^2,
    \]
    and it is Lipschitz continuous with Lipschitz constant $\kappa=1$. We run Algorithms \ref{alg:approx_fourier} and \ref{alg:approx_mollified} with $d=30$, $m=60$ and obtain the teal objects in Figure \ref{fig:ex_cube}, second and third from the left.
    In the case of a polytope, it is possible to compute the explicit description of its intersection body, see \cite{BBMS22:IntBodiesPolytopes}. Our Algorithm \ref{alg:IB}, with $d=30$, $m=60$, computes the degree $30$ polynomial $(\widetilde{\rho_{IL}})_{30}$ that we use to produce the yellow starbody in Figure \ref{fig:ex_cube}, right.
    We maximize numerically the radial function $(\widetilde{\rho_{IP}})_{30}$ to find the maximizer $(-0.7070, -0.7071, 0.0)$ with four digit precision, see Table \ref{tab:volume_slice}. This identifies the green slice in Figure \ref{fig:ex_cube} as the slice with largest volume. Indeed, one of the largest slices of the cube is given by $\frac{1}{\sqrt{2}}(-1,-1,0)$ \cite{Ball86:CubeSlicing}.
\end{example}
\begin{figure}[!ht]
\ifkeepslowthings
    \centering
    \includegraphics[height=3.5cm]{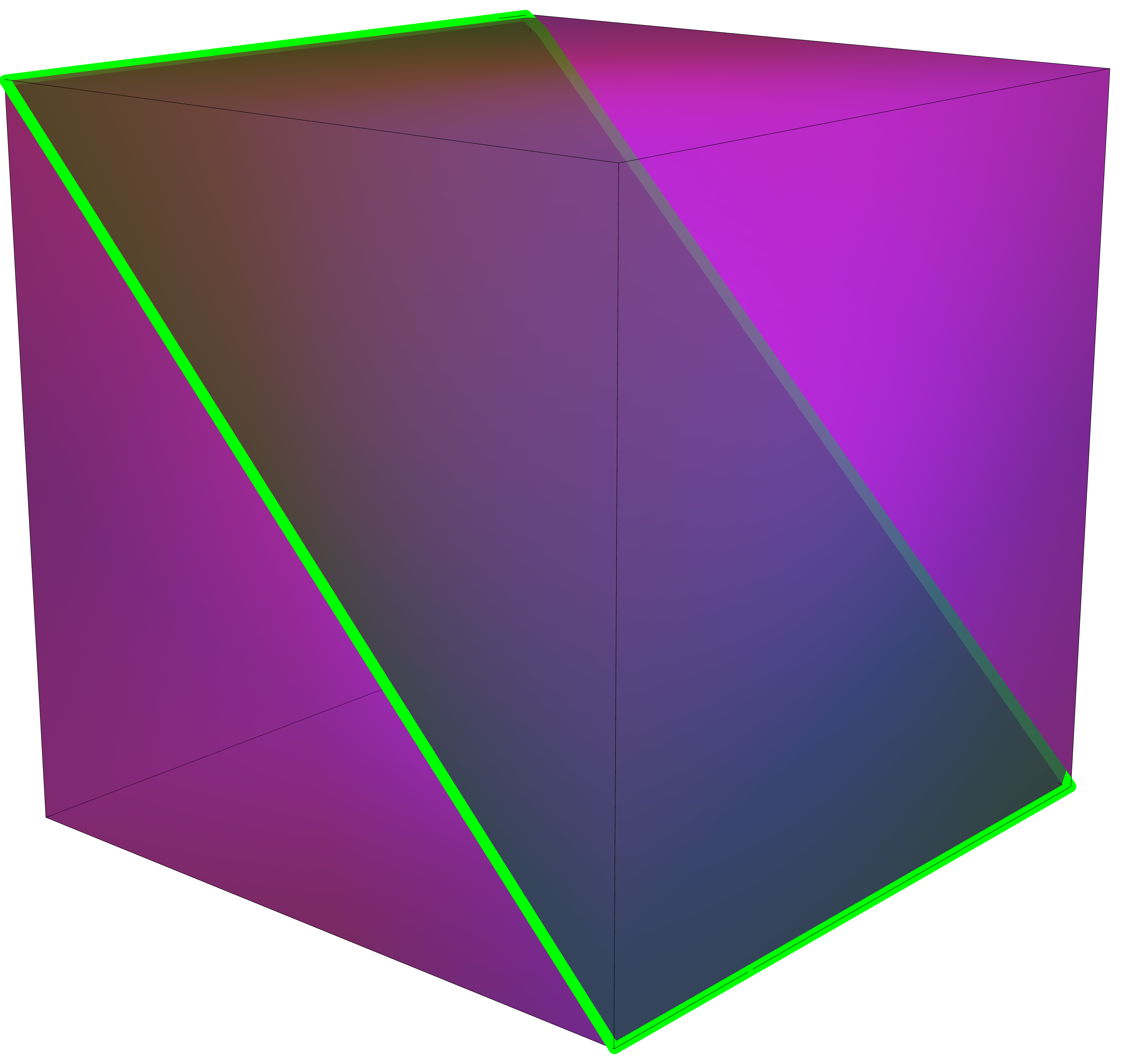}
    \;
    \includegraphics[height=3.5cm]{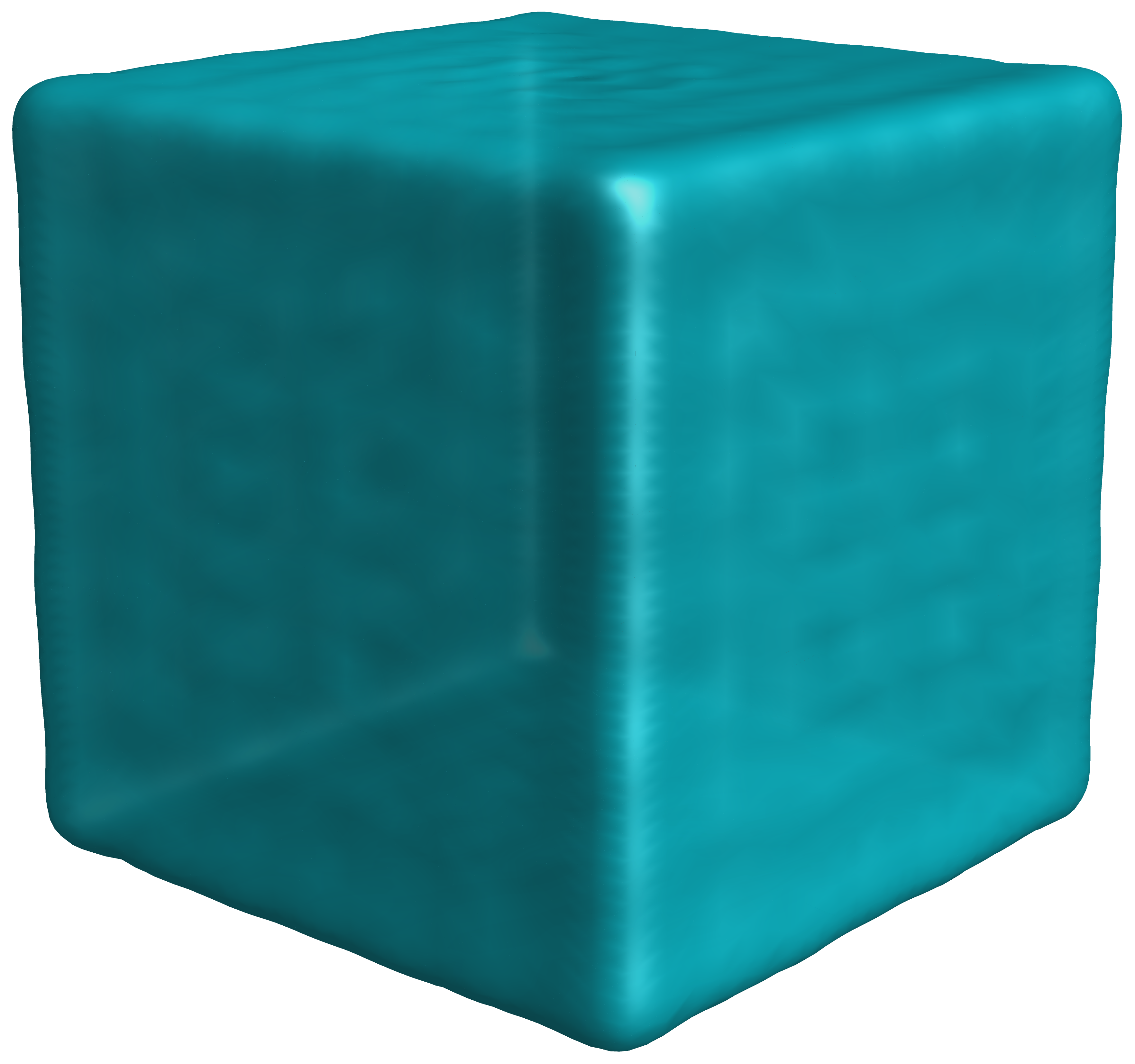}
    \;
    \includegraphics[height=3.5cm]{Figures/cubeWeighted.png}
    \;
    \includegraphics[height=3.5cm]{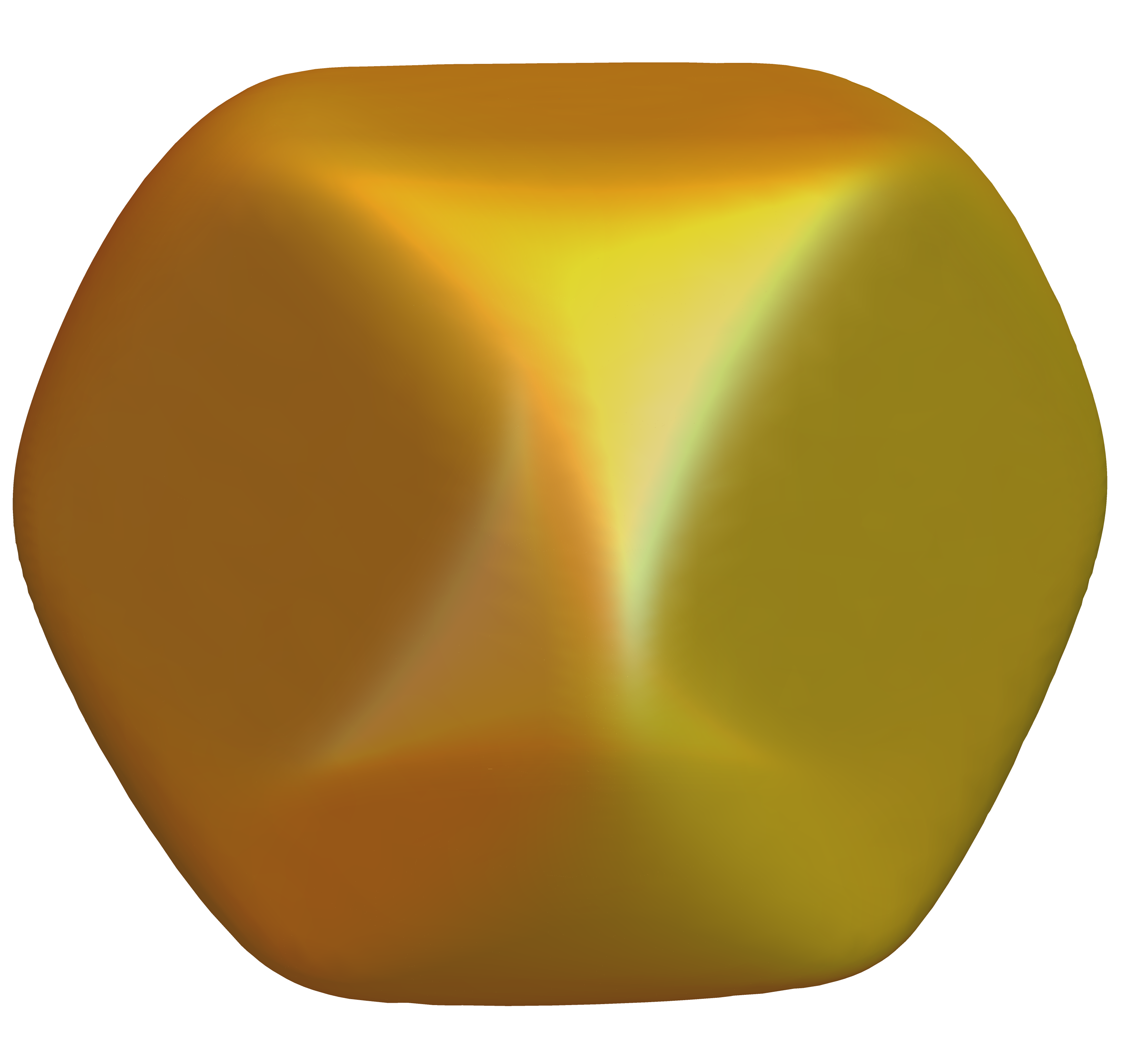}
    
\fi
    \caption{From left to right: the violet cube $P$ and its green slice with largest volume; the teal polynomial approximation of $P$, obtained via Algorithm \ref{alg:approx_fourier}; the teal \emph{mollified} polynomial approximation of $P$, obtained via Algorithm \ref{alg:approx_mollified}; the yellow polynomial approximation of the intersection body of $P$, via Algorithm \ref{alg:IB}. All approximating polynomials have degree $30$.}
    \label{fig:ex_cube}
\end{figure}

If polytopes have a very rich combinatorial structure that simplifies many operations, such as the explicit computation of their intersection body, more general starbodies or convex bodies are more complicated to treat via exact computation. This is where our approximation algorithm becomes essential. We illustrate it in a few nonpolytopal examples. We start with two important and renowned bodies related to cylinders.
\begin{example}\label{ex:cylinder}
    Let $C\subset\RR^3$ be the cylinder of revolution around the vertical $x_3$-axis, given by $\{x_1^2+x_2^2\leq 1, |x_3|\leq 1\}$, in Figure \ref{fig:ex_cyl&co}, first row, left. Its radial function can be expressed as
    \[
    \rho_C(x) = \begin{cases} 
     \frac{1}{| x_3 | } & x_3^2>x_1^2+x_2^2, \\
     \frac{1}{\sqrt{1-x_3^2}} & x_3^2<x_1^2+x_2^2. \\
    \end{cases}
    \]
    Its degree $30$ approximations obtained from Algorithms \ref{alg:approx_fourier} and \ref{alg:approx_mollified} are the teal bodies in Figure \ref{fig:ex_cyl&co}, first row, second and third objects, whereas its degree $30$ intersection body approximation is the yellow puffed cylinder in Figure \ref{fig:ex_cyl&co}, first row right. For all these computations we use quadrature rules exact in degree $60$. The numerical maximization of $(\widetilde{\rho_{IC}})_{30}$ provides the largest volume slice of $C$, see Table \ref{tab:volume_slice}. The numerical largest volume slice is shown in green in Figure \ref{fig:ex_cyl&co}, first row left.
\end{example}
\begin{figure}[!ht]
\ifkeepslowthings
    \centering
    \includegraphics[height=4cm]{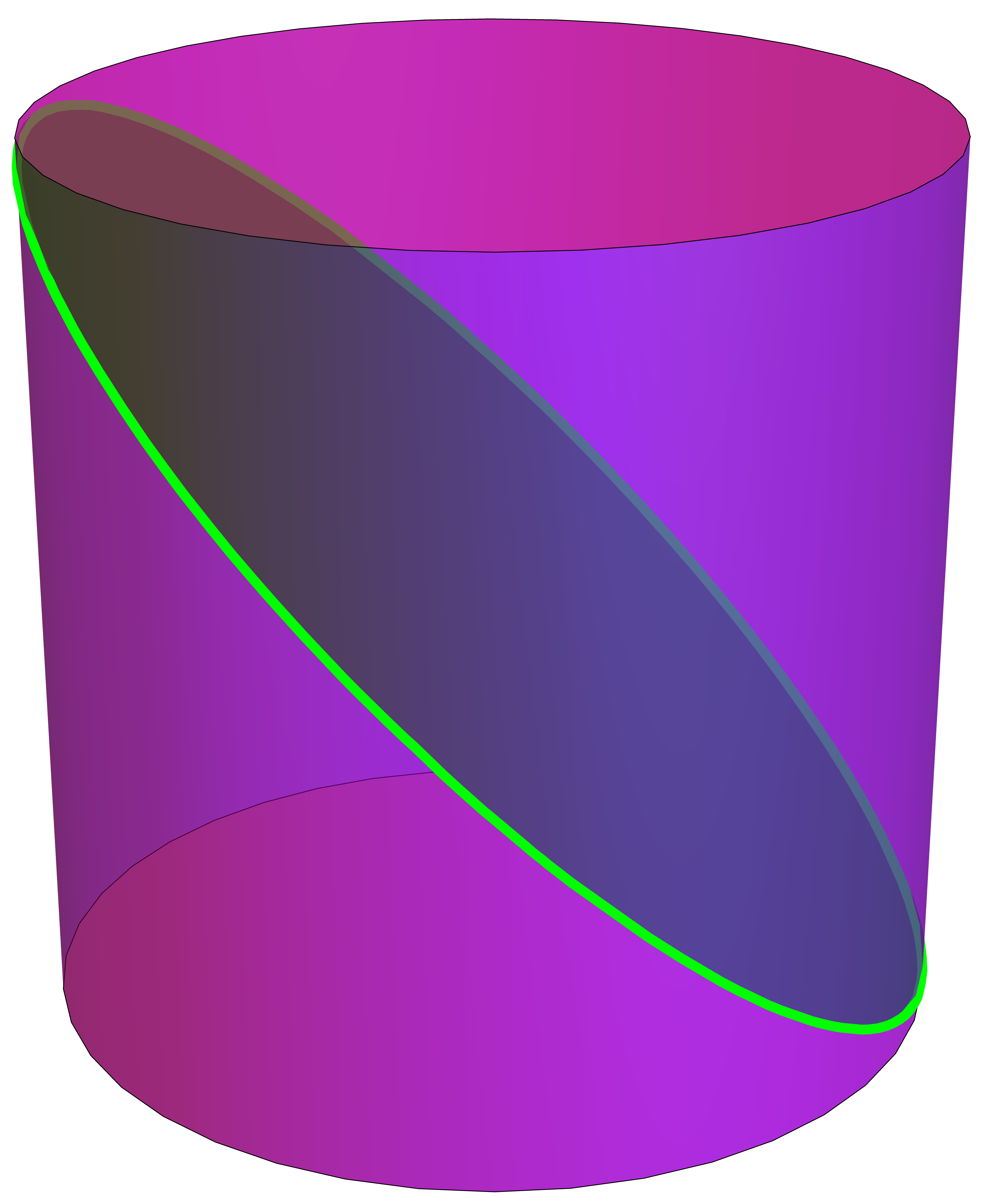}
    \quad
    \includegraphics[height=4cm]{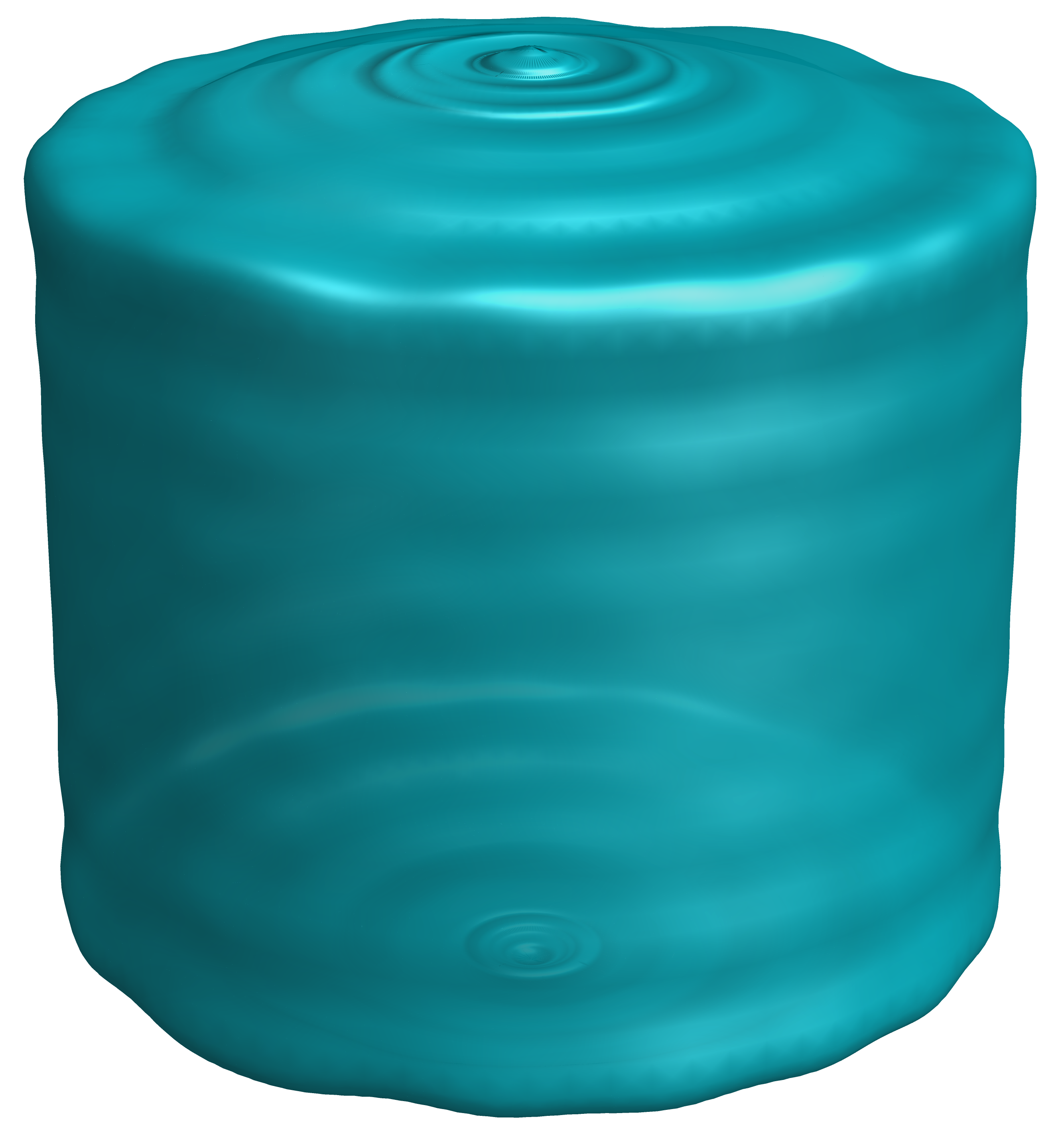}
    \quad
    \includegraphics[height=4cm]{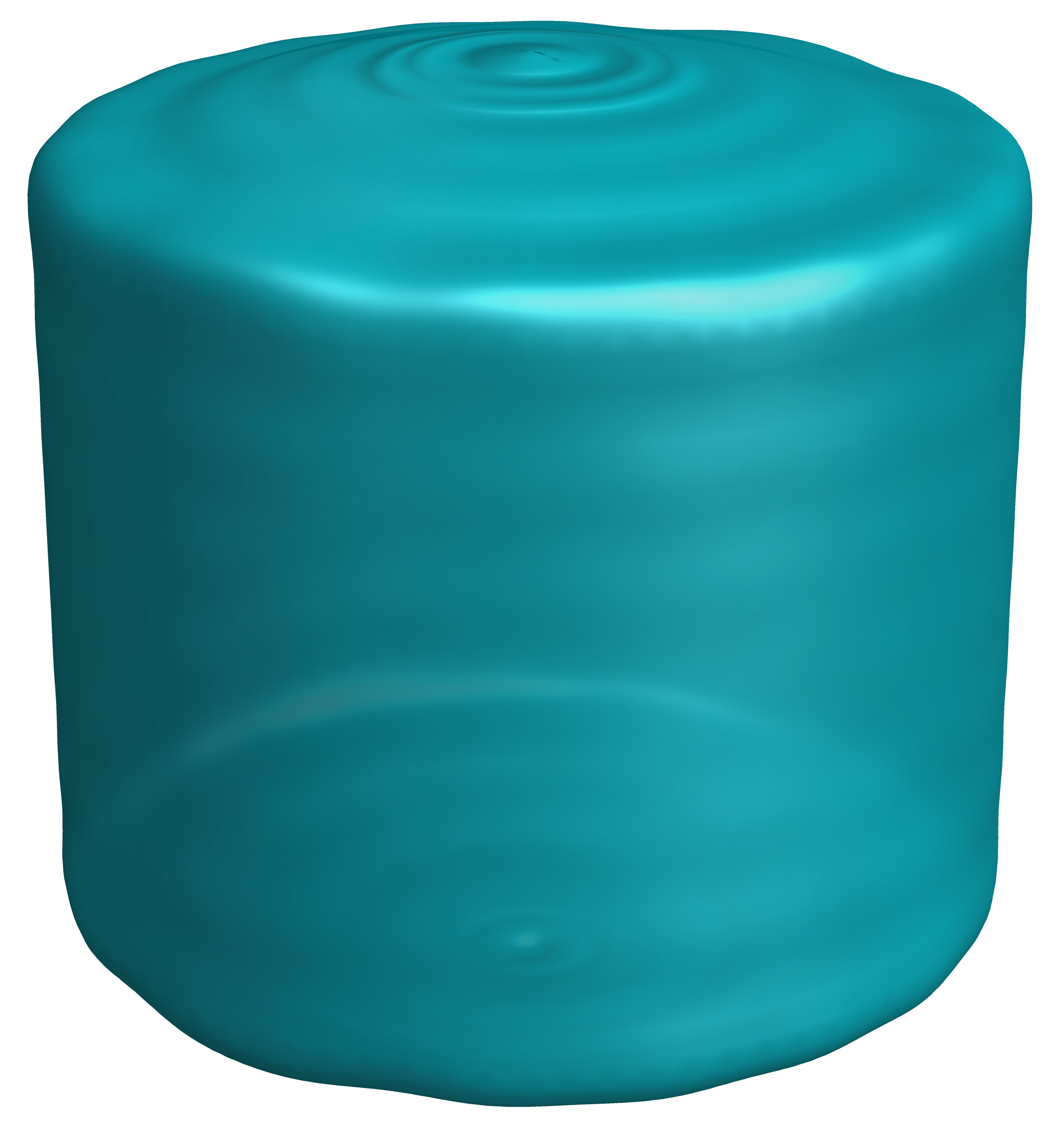}
    \quad
    \includegraphics[height=4cm]{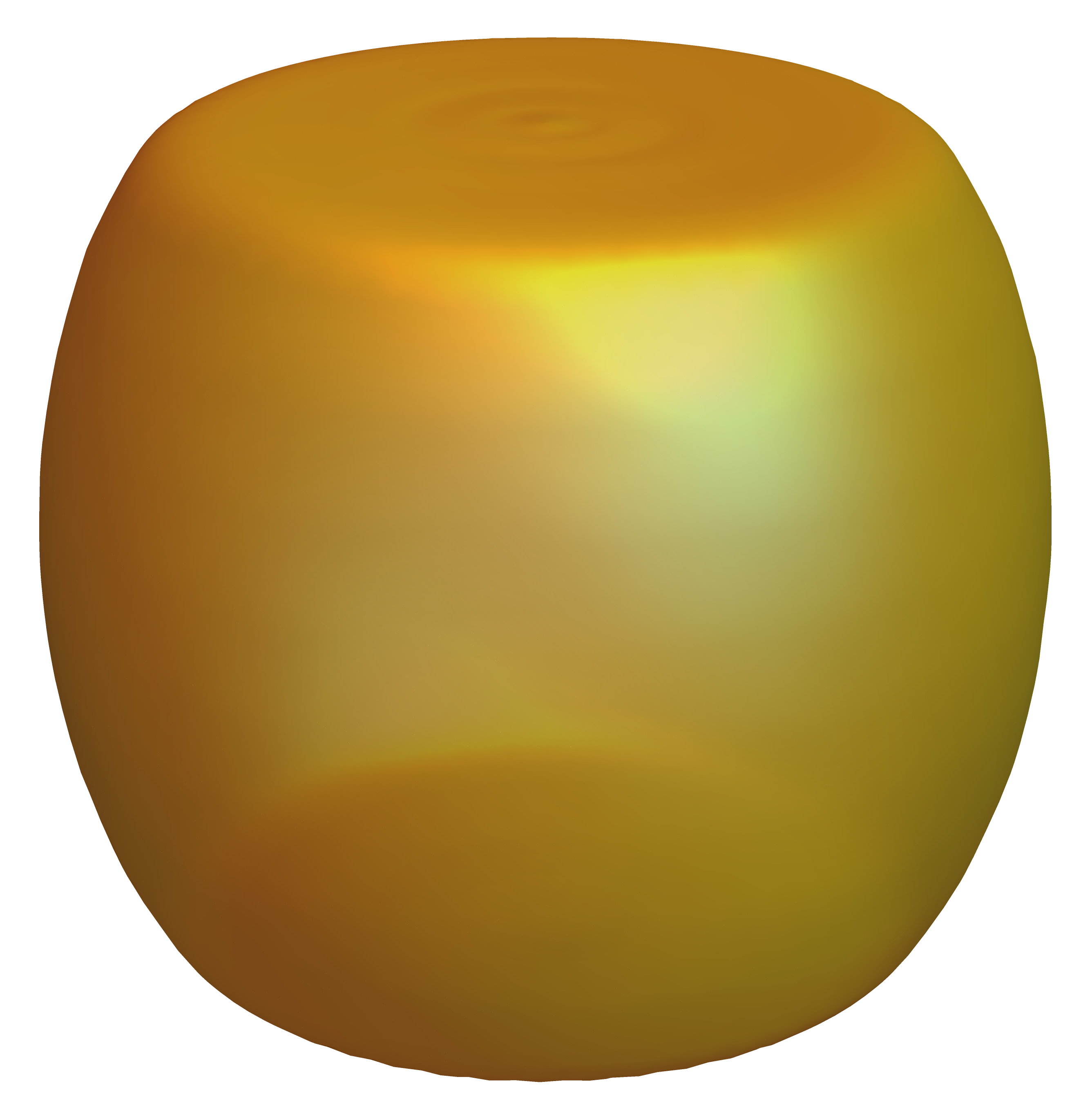} \\ 
    ~\\
    \includegraphics[height=4cm]{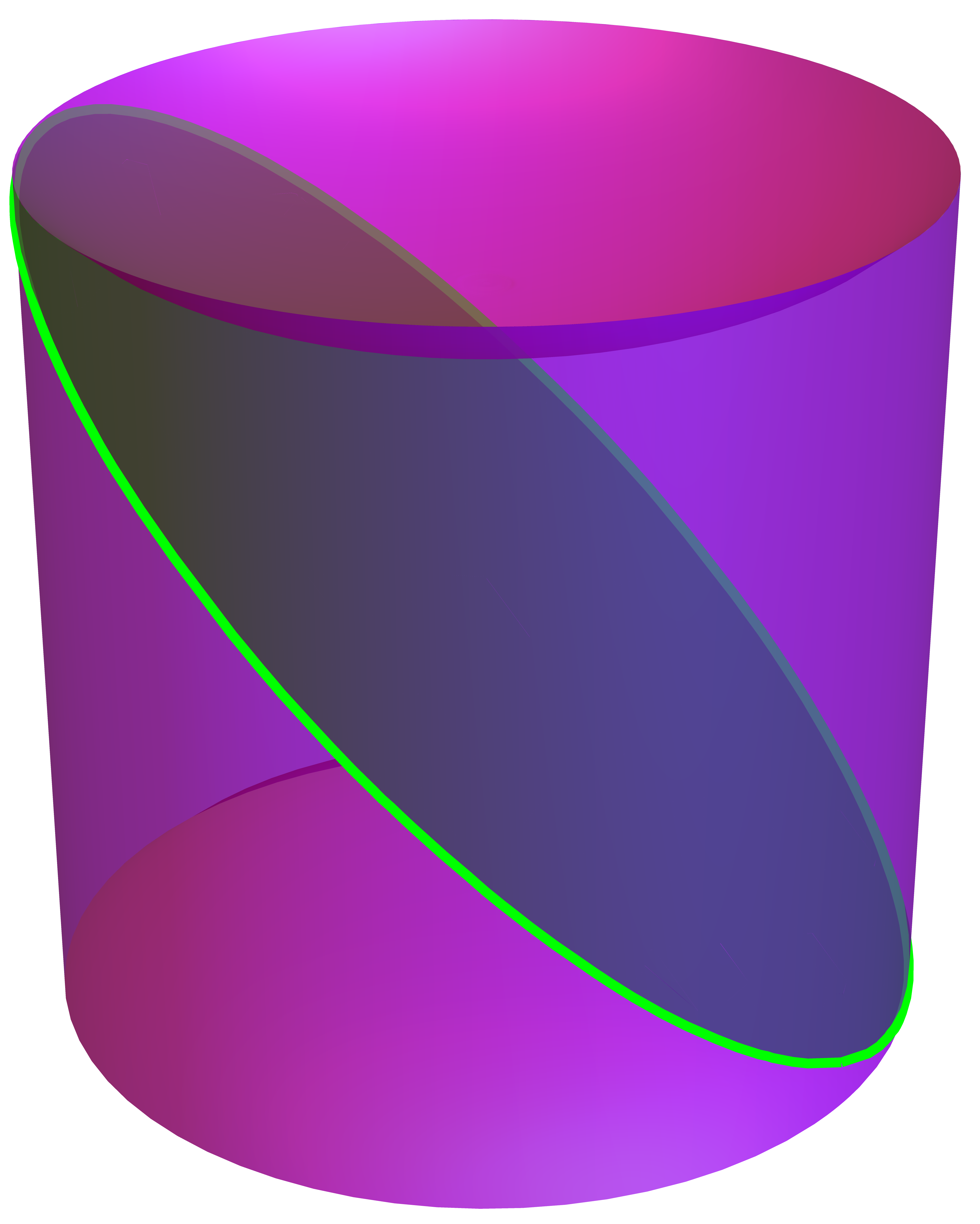}
    \quad \;
    \includegraphics[height=4cm]{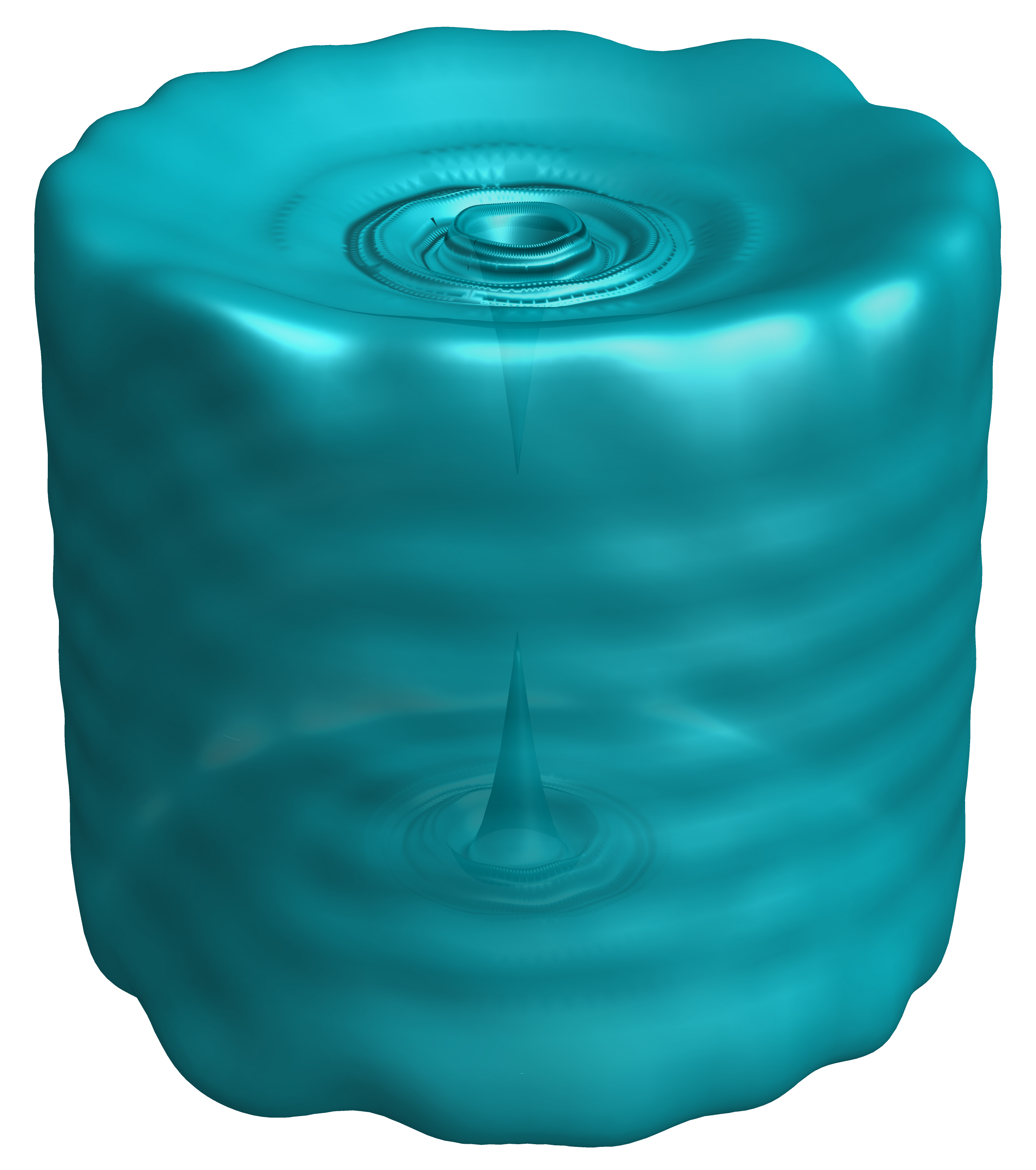}
    \quad \;
    \includegraphics[height=4cm]{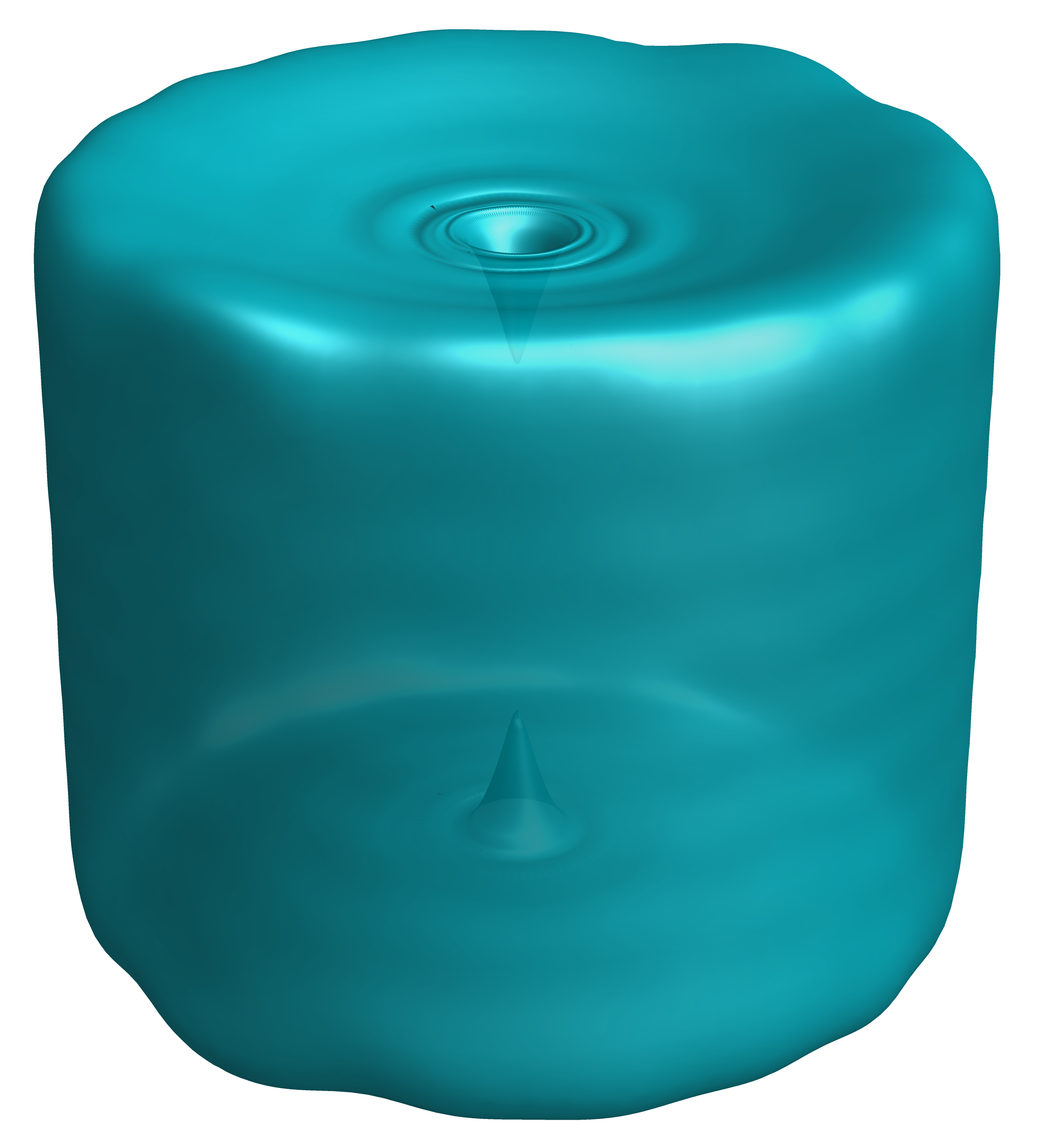}
    \quad \;
    \includegraphics[height=4cm]{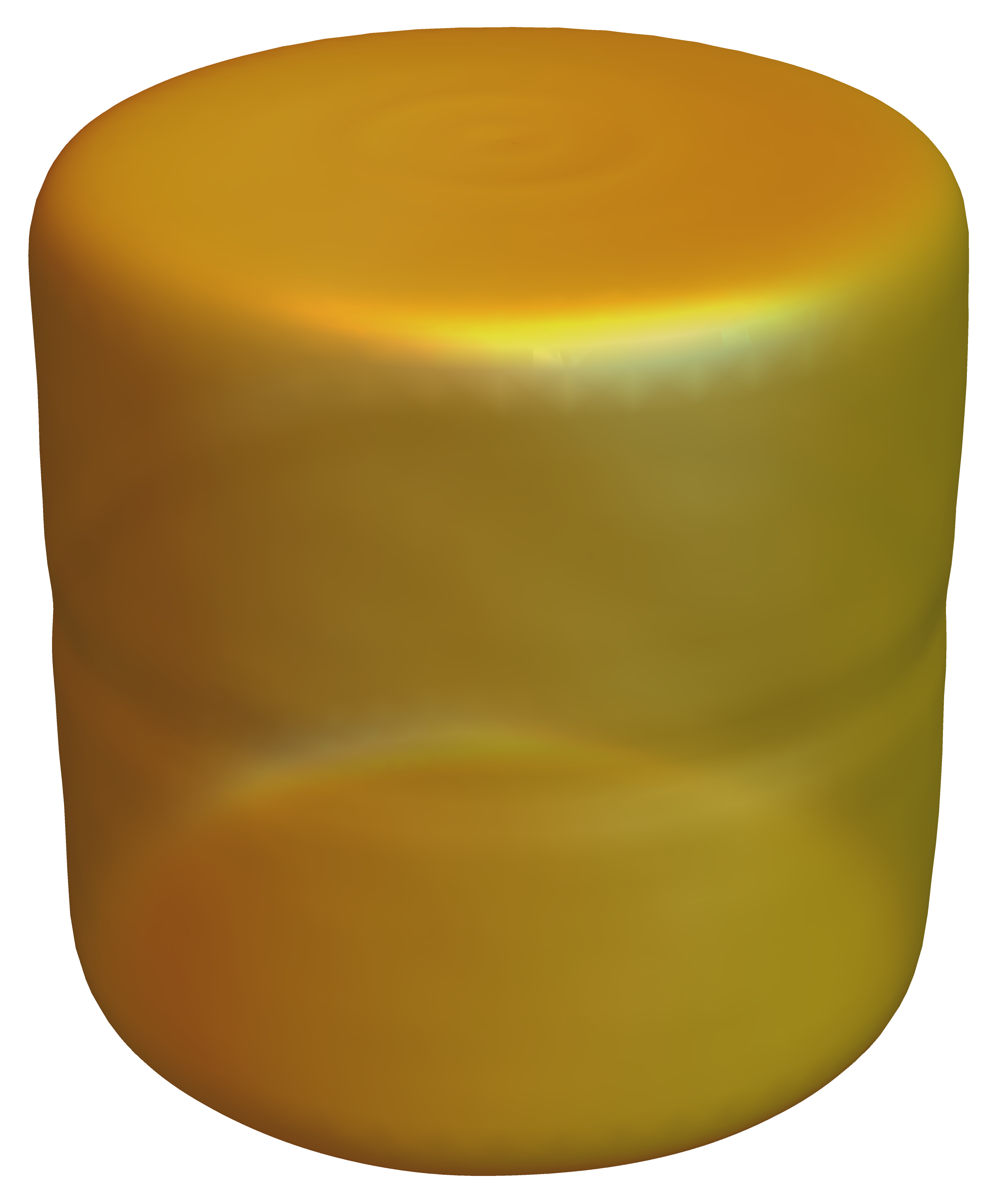}
\fi
    \caption{From left to right: the violet starbody $L$ and its green slice with largest volume; the teal polynomial approximation of $L$, via Algorithm \ref{alg:approx_fourier}; the teal \emph{mollified} polynomial approximation of $L$, via Algorithm \ref{alg:approx_mollified}; the yellow polynomial approximation of the intersection body of $L$, via Algorithm \ref{alg:IB}. All approximating polynomials have degree $30$.
    First row: $L=C$ is the cylinder from Example \ref{ex:cylinder}. Second row: $L=D$ is the dented tin can from Example \ref{ex:dented_tin_can}.}
    \label{fig:ex_cyl&co}
\end{figure}

\begin{example}\label{ex:dented_tin_can}
    Consider the \emph{dented tin can} $D\subset\RR^3$ from \cite{Gardner:GeometricTomography}*{Theorem 8.1.18}, a cylinder with concave top and bottom. The radial function of this nonconvex starbody is given by
    \[
    \rho_D(x) = \begin{cases} 
     \sqrt{\frac{1 - x_3^2 + x_3 \left(-1 + \sqrt{-1 + 2 x_3^2}\right)}{x_3 (-1 + x_3) \left(x_3^2 - \sqrt{-1 + 2 x_3^2}\right)}} & \sqrt{x_1^2+x_2^2}\leq x_3\leq 1, \\
     \frac{1}{\sqrt{1-x_3^2}} & x_3^2<x_1^2+x_2^2, \\
     \sqrt{\frac{-1 + x_3^2 + x_3 \left(-1 + \sqrt{-1 + 2 x_3^2}\right)}{x_3 (1 + x_3) \left(-x_3^2 + \sqrt{-1 + 2 x_3^2}\right)}} & -1\leq x_3 \leq -\sqrt{x_1^2+x_2^2}.
    \end{cases}
    \]
    This starbody is well-known since its intersection body is a cylinder, and bodies with such a property do not exist in higher dimension. The second row in Figure \ref{fig:ex_cyl&co} displays $D$, its degree $30$ standard (Algorithm \ref{alg:approx_fourier}) and mollified (Algorithm \ref{alg:approx_mollified}) Fourier approximation, and its intersection body (Algorithm \ref{alg:IB}). For all these computations we use quadrature rules exact in degree $60$. The numerical maximization of $(\widetilde{\rho_{ID}})_{30}$, giving the largest volume slice of $D$, returns the data in Table \ref{tab:volume_slice}. The numerical largest volume slice is shown in green in Figure \ref{fig:ex_cyl&co}, second row left.
\end{example}

We conclude our computations with the most famous, arguably, spectrahedron: the elliptope. To the best of our knowledge, there is no known explicit formula for its intersection body.
\begin{example}\label{ex:elliptope}
    Consider the elliptope $\mathcal{E}\subset\RR^3$, namely the set of positive semidefinite $3\times 3$ symmetric matrices with all-ones diagonal. Its radial function is given by 
    \[
    \rho_{\mathcal{E}} (x,y,z) = 
    \begin{cases}
    \frac{1}{6 x y z} \left( \sqrt{3}\sin{\frac{\theta}{3}} - \cos{\frac{\theta}{3}} - 1 \right),  & x y z < 0, \\
    \frac{1}{6 x y z} \left( 2 \cos{\frac{\theta}{3}} - 1 \right),  & x y z > 0,
    \end{cases}
    \]
    where $\theta = \arg{\left( 54 x^2 y^2 z^2-1+ i 6 \sqrt{3} \sqrt{x^2 y^2 z^2 \left( 1 - 27 x^2 y^2 z^2 \right)} \right)}$.
    The elliptope is the violet puffed tetrahedron in Figure \ref{fig:ex_elliptope}, left, its teal degree $30$ polynomial approximations via Algorithms \ref{alg:approx_fourier} and \ref{alg:approx_mollified} are the second and third objects, and its yellow approximated intersection body, which looks convex, is on the right. We use quadrature rules exact in degree $100$.
    Also in this case, we can maximize numerically the radial function $(\widetilde{\rho_{I\mathcal{E}}})_{30}$ to find an estimate of the largest volume slice of $\mathcal{E}$, see Table \ref{tab:volume_slice}. This computation suggests that the largest slices (Figure \ref{fig:ex_cyl&co}, left) are those containing two of the six nodes, giving rise to convex bodies delimited by a parabola and a segment.
\end{example}
\begin{figure}[ht]
    \ifkeepslowthings
        \centering
        \includegraphics[height=3.8cm]{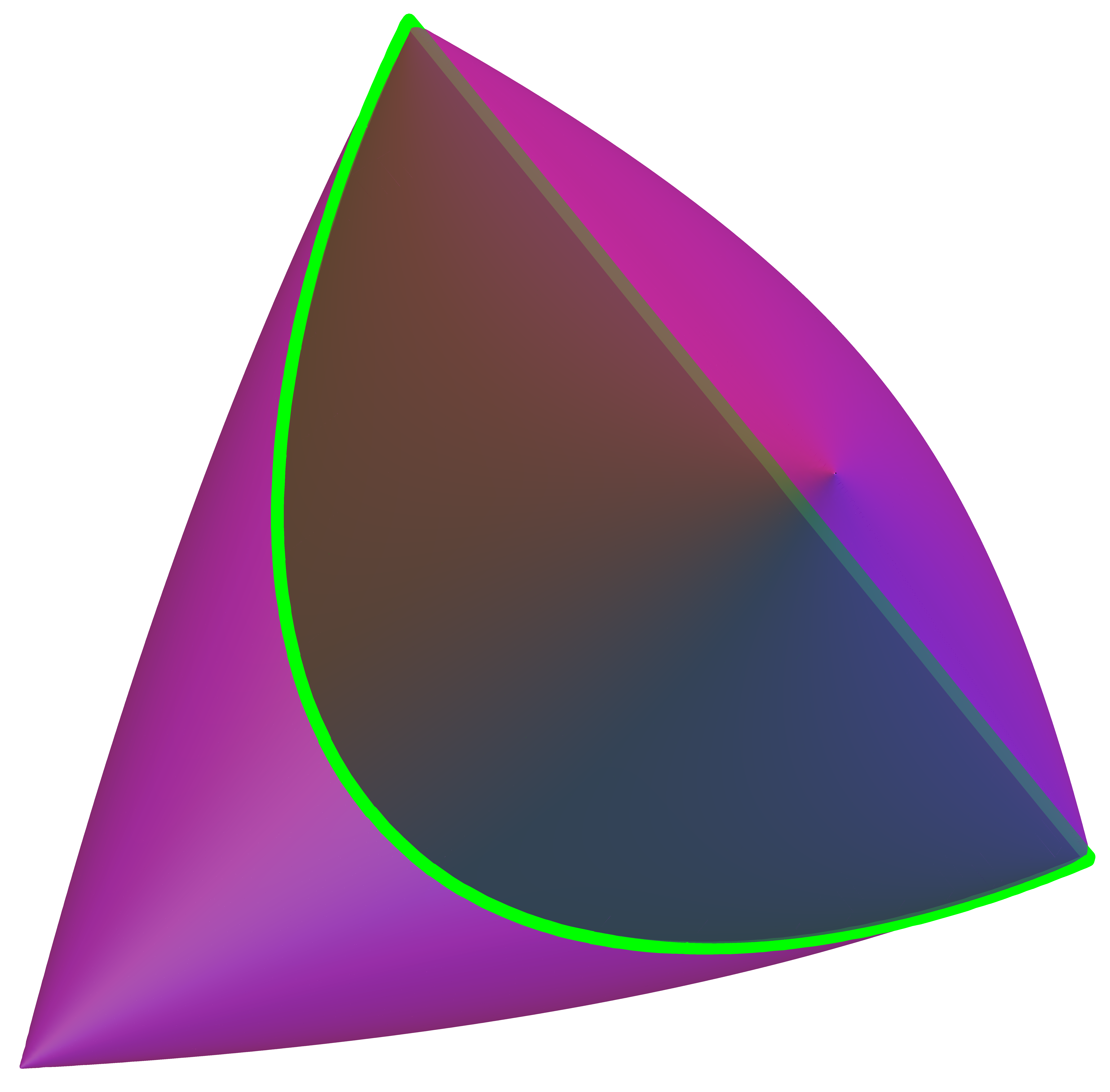}
        \includegraphics[height=3.8cm]{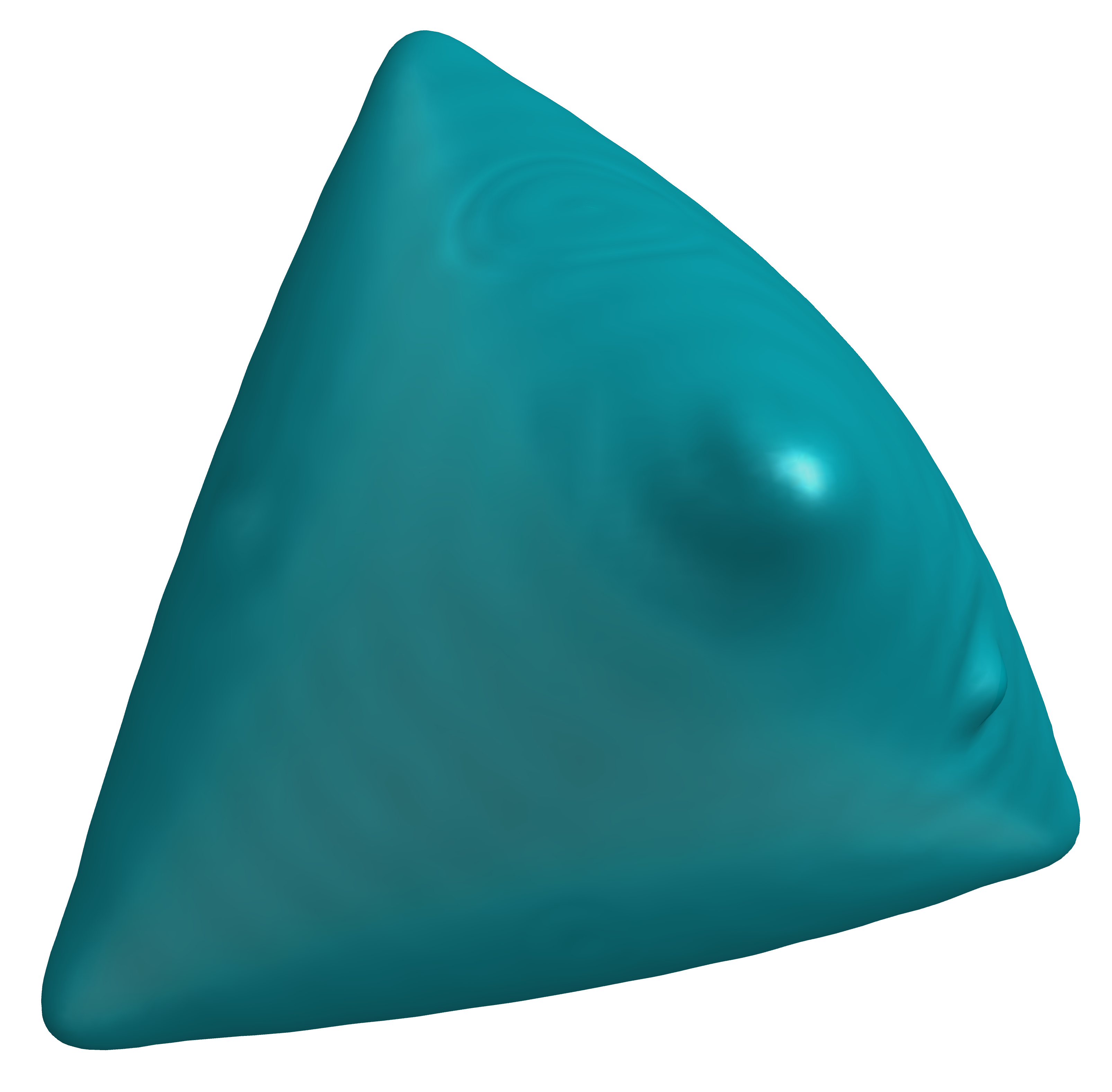}
        \includegraphics[height=3.8cm]{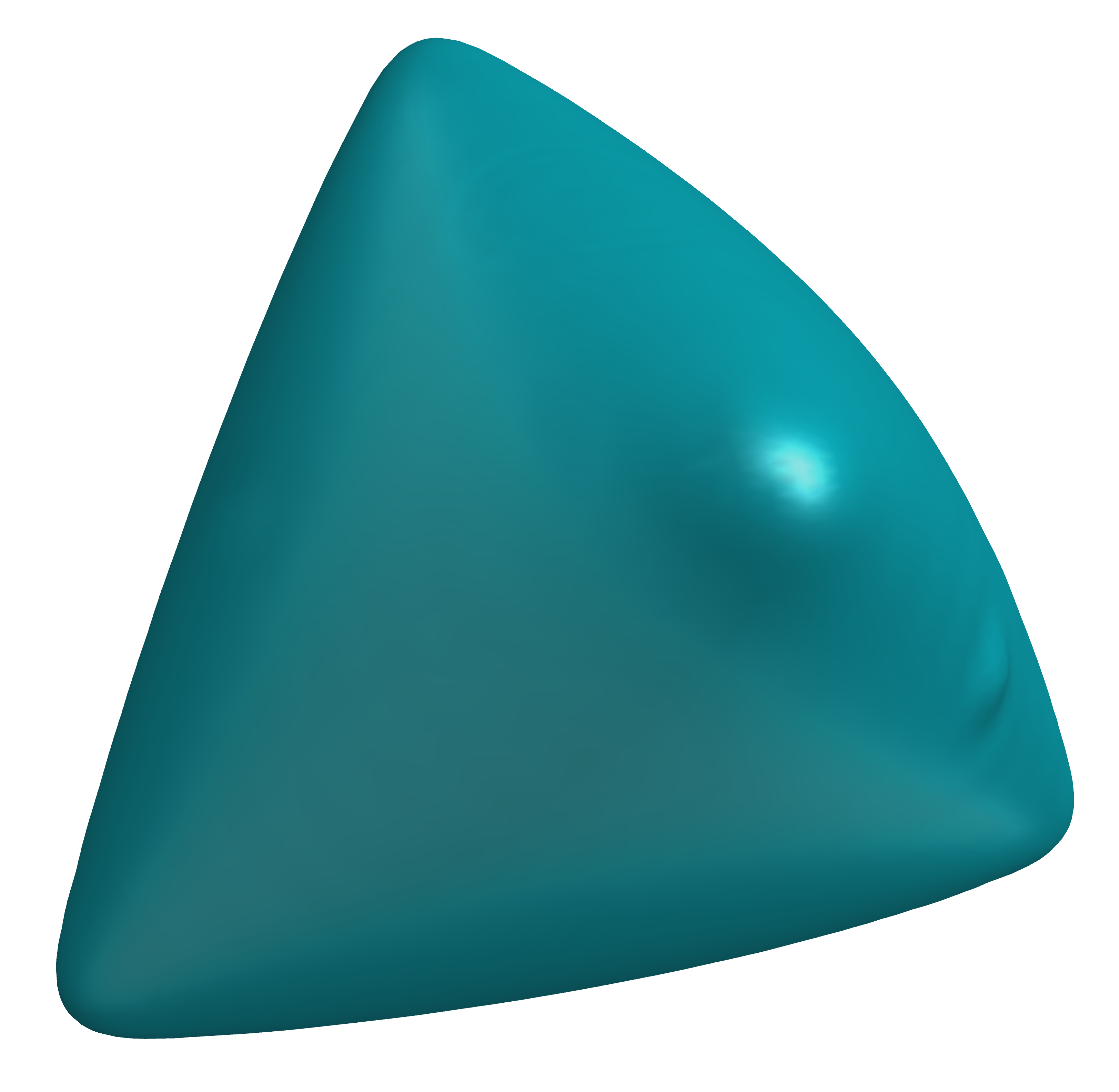}
        \includegraphics[height=3.8cm]{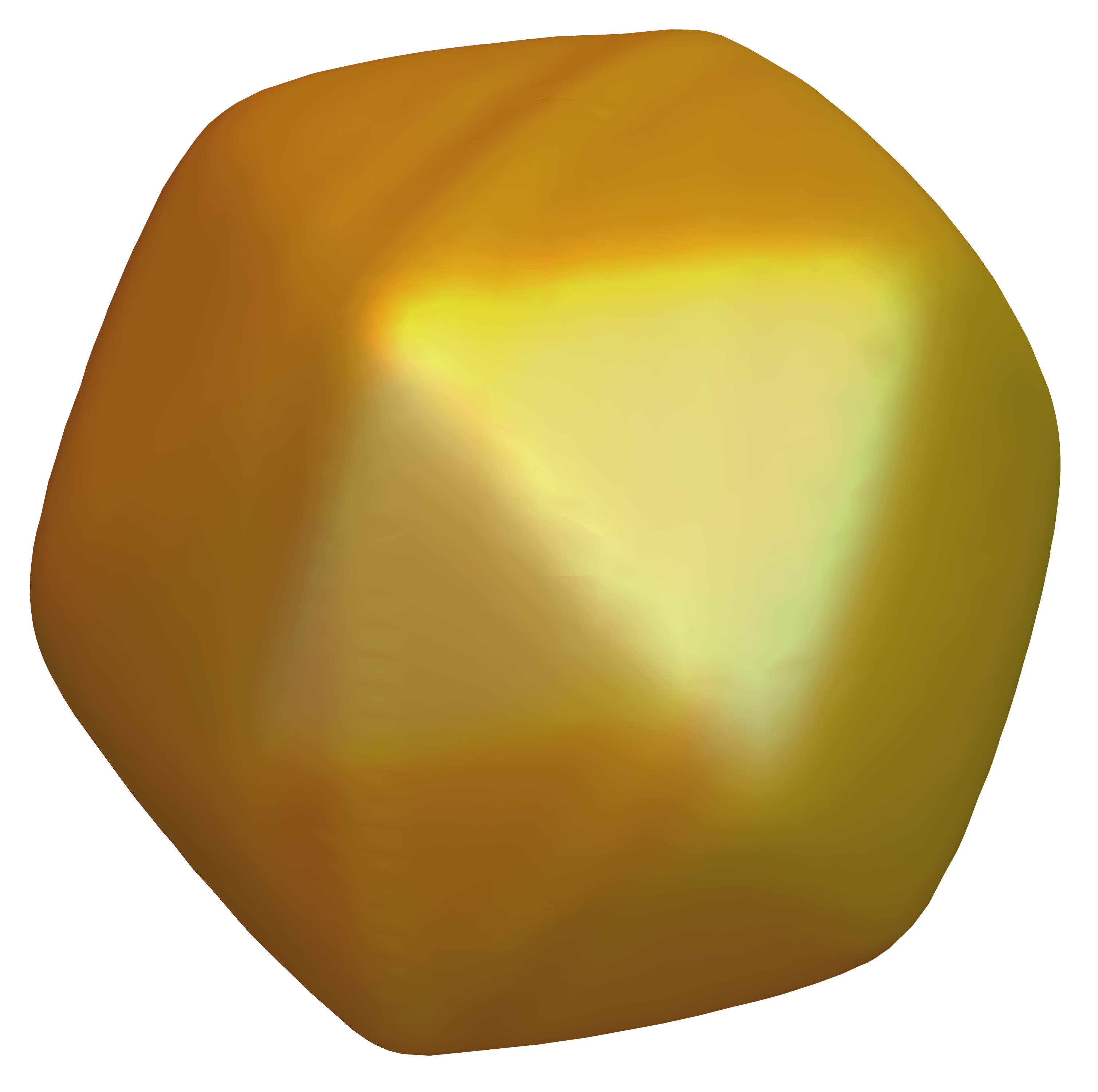}
    \fi
        \caption{From left to right: the violet elliptope $\mathcal{E}$ from Example \ref{ex:elliptope} and its green slice with largest volume; the teal polynomial approximation of $L$, via Algorithm \ref{alg:approx_fourier}; the teal \emph{mollified} polynomial approximation of $\mathcal{E}$, via Algorithm \ref{alg:approx_mollified}; the yellow polynomial approximation of the intersection body of $\mathcal{E}$, via Algorithm \ref{alg:IB}. All approximating polynomials have degree $30$.}
        \label{fig:ex_elliptope}
    \end{figure}
\begin{table}[!ht]
    \centering
    \begin{tabular}{c||c|c|c|c}
        $L$ & numerical max & numerical direction & max & a direction \\
        \hline
        cube & $5.4215$ & $(-0.7070,-0.7071, 0)$ & $4\sqrt{2}$ & $(-\frac{1}{\sqrt{2}},-\frac{1}{\sqrt{2}},0)$ \\ 
        cylinder & $4.1184$ & $(0.6414, 0.2595, -0.7219)$ & $\pi\sqrt{2}$ & $(\frac{\cos{t}}{\sqrt{2}}, \frac{\sin{t}}{\sqrt{2}}, -\frac{1}{\sqrt{2}})$ \\ 
        dented tin can & $4.1713$ & $(0.6314,0.3021,-0.7141)$ & $\pi\sqrt{2}$ & $(\frac{\cos{t}}{\sqrt{2}}, \frac{\sin{t}}{\sqrt{2}}, -\frac{1}{\sqrt{2}})$ \\ 
        elliptope & $3.7260$ & $(0.7089, -0.0026, -0.7052)$ & $\frac{8\sqrt{2}}{3}$ & $(\frac{1}{\sqrt{2}},0,-\frac{1}{\sqrt{2}})$ \\
    \end{tabular}
    \caption{Largest volume slice of $L$. The first column contains the considered convex body; the second and third columns contain the value for the largest volume and the direction orthogonal to the corresponding slice, obtained from the numerical computation (with four digit precision); the fourth and fifth columns contain the largest volume of the slices of $L$ and one of the directions orthogonal to one of them, respectively.}
    \label{tab:volume_slice}
\end{table}

\begin{remark}
    The interest in the largest volume slice of a convex body is motivated by the recent resolution of Bourgain's slicing conjecture \cites{Guan24:Bourgain,KlaLeh24:Bourgain}. Indeed, the volume of the largest volume slice of any convex body of volume $1$ is a lower bound for the (unknown) universal constant in Bourgain's problem. Further developing a solid and efficient computational frameworks for the largest volume slice computation also in higher dimension (in fact, the first relevant dimension is $5$) could provide us with nontrivial lower bounds for the universal constant.
\end{remark}

\subsection{Computing the width of dual bodies}\label{subsec:width}
The width of a set is the minimum distance $w$ such that it is possible to fit the body between two parallel hyperplanes that are at distance $w$ apart. Recall that the gauge function of a convex set $L$ coincides with the \emph{support function} of $L^\circ$, which is by definition
\begin{equation}
    h_{L^\circ}(x):=\max\{\langle x,y \rangle: y\in L^\circ\}.
\end{equation}
It follows that the width $w(L^{\circ})$ of the polar body $L^{\circ}$ can be computed as
\[
w(L^{\circ}) = \max_{x\in S^{n-1}}\left(\gamma_L(x)+\gamma_L(-x)\right).
\]
By Theorem~\ref{polySOS}, if $L$ is a convex polygauge body then the computation of the width of $L^{\circ}$ reduces to the optimization of an even regular (polynomial) function on the sphere, a nontrivial problem for which several approaches are available. The following examples will show that this approach can be carried out in practice in some cases. 

A possible procedure for approximating the width of a convex body $L$ would be to approximate $L^\circ$ with a polygauge body $(L^\circ)_d$ whose gauge function is a polynomial of degree $d$, obtained from Algorithm \ref{alg:approx_mollified} applied to $f_{L^\circ}$, and maximize this polynomial. 
However, in order to make use the computations made for the examples in Section \ref{sec:examples} and highlight how many information can be deduced from a single polynomial approximation, we use the following approach. We compute the polyradial approximation $(L^\circ)_d$ of $L$, and then numerically maximize the rational function $\widetilde{(\rho_{L^\circ})}_{d}^{-1}$ to get the width of $L$.

\begin{example}
    We continue the examples in Section \ref{sec:examples} using the approach just described. The starbodies in Examples \ref{ex:cube}, \ref{ex:cylinder}, and \ref{ex:elliptope} are convex, and we can therefore think of each of them as the dual body $L^\circ$ to a convex body $L$, where $L$ is respectively 
    \begin{itemize}
        \item the octahedron $L=\conv \{(\pm 1, 0, 0), (0,\pm1, 0), (0,0,\pm 1)\}$, dual to the cube;
        \item the spinning top $L = \{ (x,y,z)\in [-1,1]^3 \,\colon\, x^2+y^2\leq (1-z)^2, x^2+y^2\leq (1+z)^2\}$, dual to the cylinder;
        \item the smoothed tetrahedron $L = \conv \{  (x,y,z)\in [-1,1]^3 \,\colon\, x^2 y^2 + y^2 z^2 + x^2 z^2 - 2 x y z=0\}$, dual to the elliptope.
    \end{itemize}
    These convex bodies are displayed in Figure \ref{fig:width}, from left to right. 
    \begin{figure}[h]
        \centering
        \includegraphics[height=4.1cm]{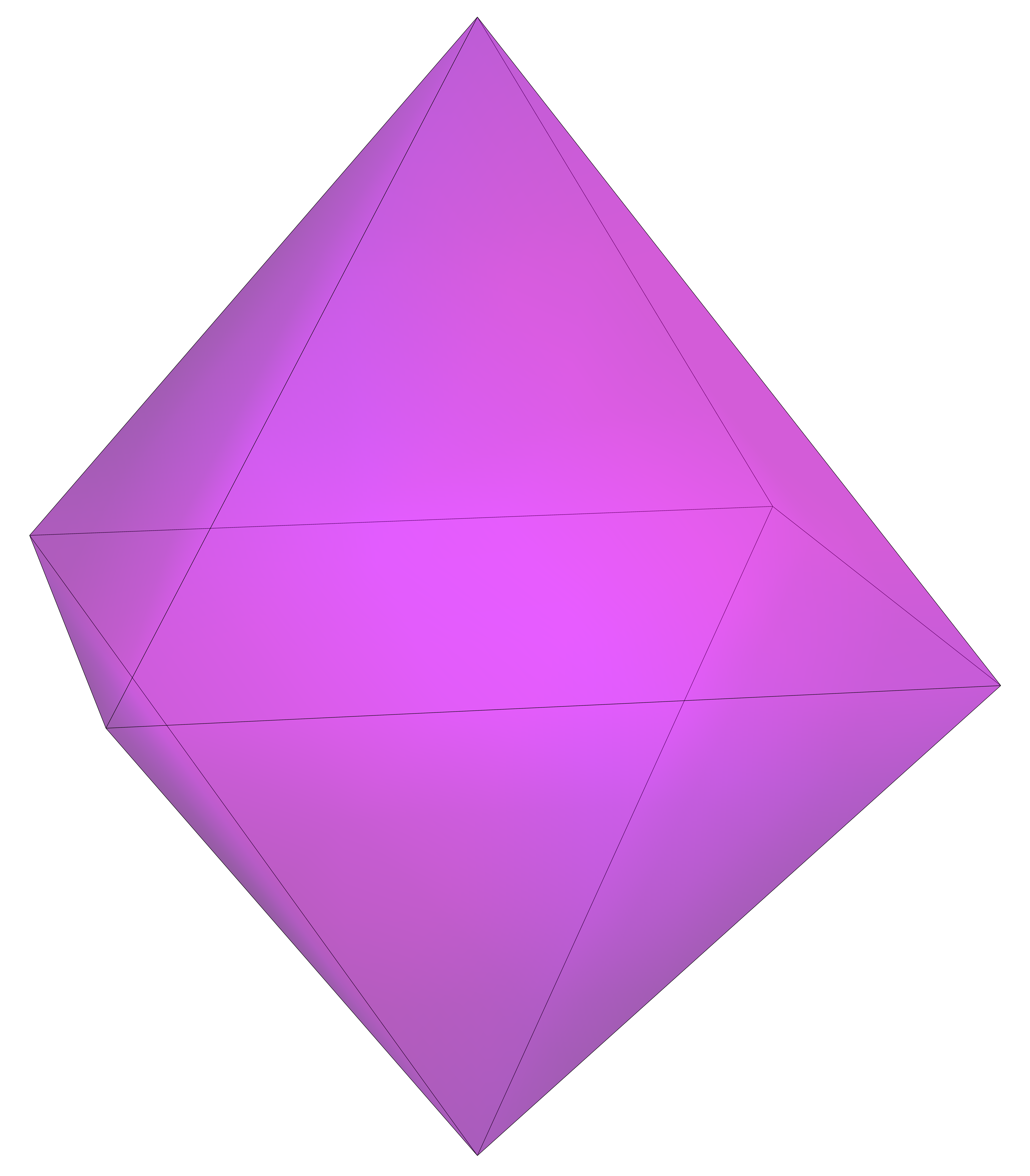}
        \quad
        \includegraphics[height=4cm]{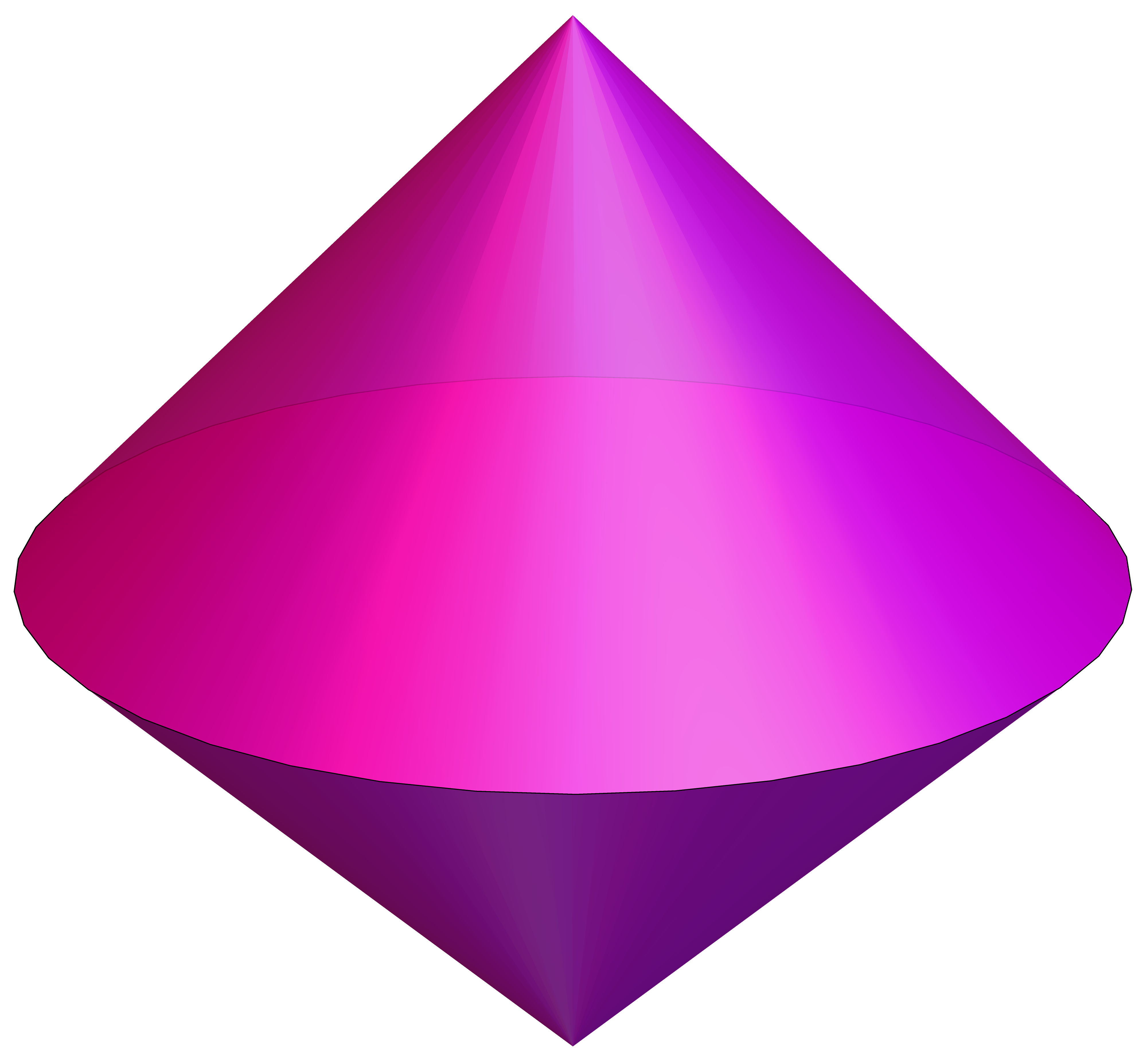}
        \quad
        \includegraphics[height=3.8cm]{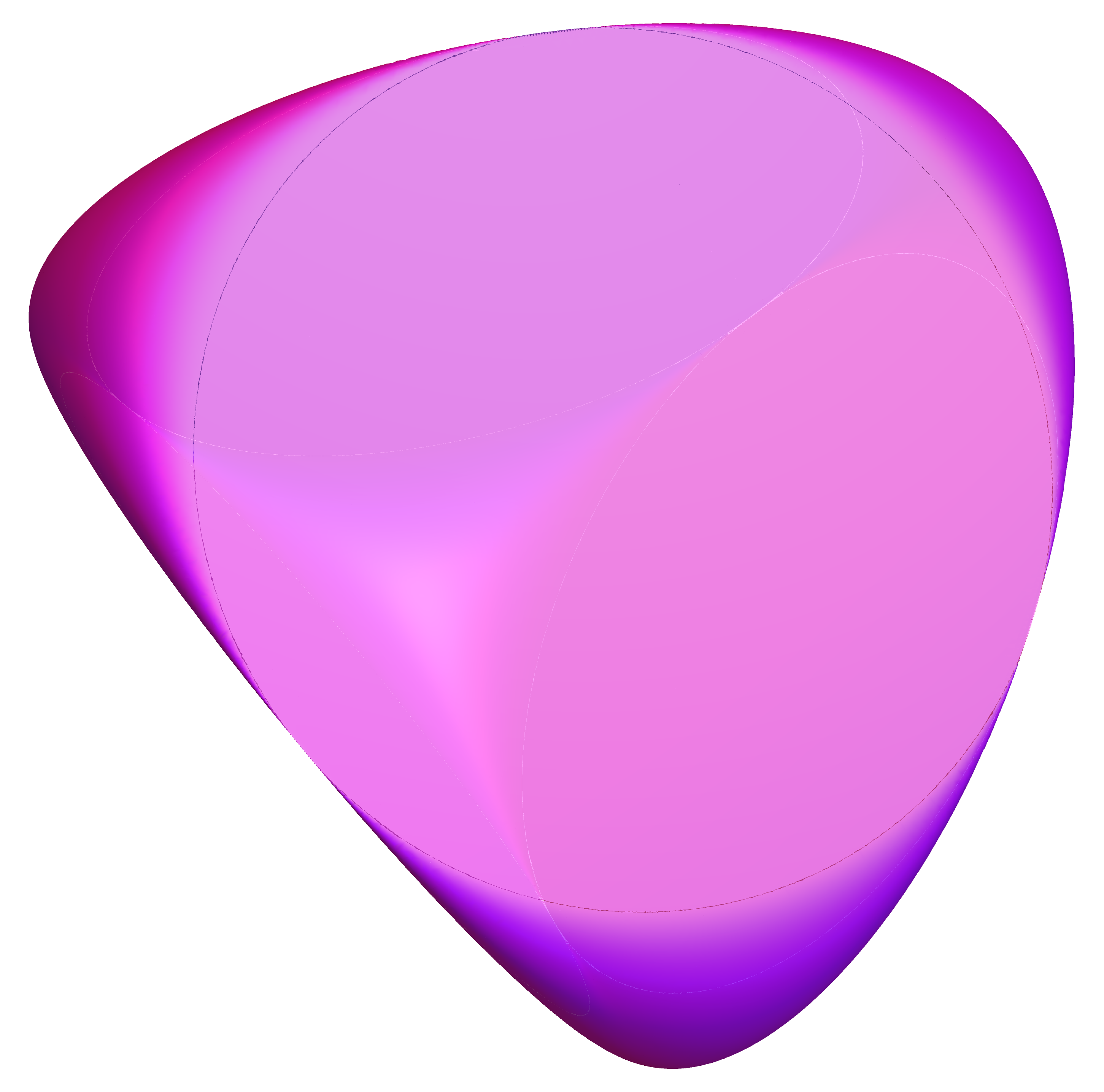}
        \caption{From left to right: dual bodies to cube, cylinder, and elliptope.}
        \label{fig:width}
    \end{figure}
    Since, for all these choices of $L$, we have access to $\widetilde{(\rho_{L^\circ})}_{30}$, the degree $30$ mollified polynomial approximation of $\rho_{L^\circ}$ which was computed in Examples \ref{ex:cube}, \ref{ex:cylinder}, and \ref{ex:elliptope}, we use the following approach.
    We consider
    \[
    \widetilde{h}_L = \frac{1}{\widetilde{(\rho_{L^\circ})}_{30}},
    \]
    which is a rational function that approximates $h_L$, or equivalently $\gamma_{L^\circ}$, and we use numerical methods to find the maximum of the sum $\widetilde{h}_L(x)+\widetilde{h}_L(-x)$ for $x\in S^2$. Concretely, this is done in \texttt{Mathematica} using the command \texttt{NMaximize}, and heuristic methods such as the Nelder-Mead simplex method. The output of such computation is summarized in Table \ref{tab:width}.
    \begin{table}[!ht]
        \centering
        \begin{tabular}{c||c|c|c|c}
            $L$ & numerical width & numerical direction & width & a direction \\
            \hline
            octahedron & $1.9980$ & $(0,0,-1)$ & $2$ & $(0,0,-1)$ \\
            double cone & $1.9957$ & $(0.0674, -0.07364, -0.9950)$ & $2$ & $(0,0,-1)$ \\
            smoothed tetrahedron & $2.0004$ & $(-0.0081, 0, -1)$ & $2$ & $(0,0,-1)$ \\
        \end{tabular}
        \caption{Computation of the width of $L$. The first column contains the convex body we compute the width of; the second and third columns contain the value for the width and the direction in which that is achieved, obtained from the numerical computation (with four digit precision); the fourth and fifth columns contain the exact width of $L$ and one of the directions in which it is achieved, respectively.}
        \label{tab:width}
    \end{table}
\end{example}

\bibliographystyle{alpha}
\bibliography{biblio}

\vfill
\small{

\noindent \textsc{Chiara Meroni} \\
\textsc{ETH Institute for Theoretical Studies, Z\"urich, Switzerland} \\
\url{chiara.meroni@eth-its.ethz.ch} \\

\noindent \textsc{Jared Miller}\\
\textsc{University of Stuttgart, Stuttgart, Germany} \\
\url{jared.miller@imng.uni-stuttgart.de} \\

\noindent \textsc{Mauricio Velasco} \\
\textsc{Centro de Matemáticas (CMAT), Universidad de la Rep\'ublica, Montevideo, Uruguay} \\
\url{mvelasco@cmat.edu.uy} \\
}

\end{document}